\def\BState{\State\hskip-\ALG@thistlm}
\Crefname{equation}{}{}
\crefname{equation}{}{}
\begin{document}

\title{
Spatiotemporal Imaging with Diffeomorphic Optimal Transportation
}
\author{Chong Chen\thanks{LSEC, ICMSEC, Academy of Mathematics and Systems Science, Chinese Academy of Sciences, Beijing 100190, China (chench@lsec.cc.ac.cn).}}

\maketitle

\begin{abstract}
We propose a variational model with diffeomorphic optimal transportation for joint image reconstruction and motion estimation. The proposed model is a production of assembling the Wasserstein distance with the Benamou--Brenier formula in optimal transportation and the flow of diffeomorphisms involved in large deformation diffeomorphic metric mapping, which is suitable for the scenario of spatiotemporal imaging with large diffeomorphic and mass-preserving deformations. Specifically, we first use the Benamou--Brenier formula to characterize the optimal transport cost among the flow of mass-preserving images, and restrict the velocity field into the admissible Hilbert space to guarantee the generated deformation flow being diffeomorphic. We then gain the ODE-constrained equivalent formulation for Benamou--Brenier formula. We finally obtain the proposed model with ODE constraint following the framework that presented in our previous work. We further get the equivalent PDE-constrained optimal control formulation. The proposed model is compared against several existing alternatives theoretically. The alternating minimization algorithm is presented for solving the time-discretized version of the proposed model with ODE constraint. Several important issues on the proposed model and associated algorithms are also discussed. Particularly, we present several potential models based on the proposed diffeomorphic optimal transportation. Under appropriate conditions, the proposed algorithm also provides a new scheme to solve the models using quadratic Wasserstein distance. The performance is finally evaluated by several numerical experiments in space-time tomography, where the data is measured from the concerned sequential images with sparse views and/or various noise levels.
\end{abstract}

\begin{keywords}
spatiotemporal imaging, joint image reconstruction and motion estimation, quadratic Wasserstein distance, flow of diffeomorphisms, diffeomorphic optimal transportation, mass-preserving deformation 
\end{keywords}

\section{Introduction}
\label{sec:Introduction}
Mathematically, the spatiotemporal (space-time) imaging is typically a kind of time-dependent or dynamic inverse problems, 
which has been gained extensively study (\cite{GiJiDaSc15,ScHaBu18,ChGrOz19} and the references therein). As a representative example, 
when the tomographic imaging (e.g., \ac{CT}, \ac{PET}, \ac{SPECT}, \ac{MRI}) is used for chest or heart inspection, 
the measurements are frequently acquired over a time period in the minute magnitude. If the unavoidable respiration and/or cardiac 
movements are neglected or failed to track and correct, this would lead to the reconstructed images with severe 
degradation \cite{WaViBe99,ScLe00,HaLi10,RaTaZa13}. As a result, it is significant to estimate and compensate for the unknown 
motions of the organs during image reconstruction in spatiotemporal imaging.  

In spatiotemporal imaging, the acquired data is usually a time-series, which is often divided into 
gates by time or amplitude based respiration and/or cardiac gating method. The details about the gating method 
are referred to \cite{LaDaBuScScSc06,DaBuLaScSc07,DaStJiScScSc09,Luci09,GrGeMe14}. 
After gated, the collected data within each gate can be seen as a certain data measured from a static object in a 
fixed time or pseudo-time state \cite{BuDaWuSc09}. Then one can use the different kind of strategies to perform 
spatiotemporal image reconstruction. An intuitive strategy is first performing image reconstruction for each gate independently, 
and then implementing motion correction/compensation by image registration for the reconstructed 
images \cite{DaBuJiSc08,DaLaJiSc08,BaBr11,GiRuBu12,BrSaMaKa15}. However, the data at each 
gate is often interfered by enhanced noise levels due to gating. If it is worse sparse-sampled in order to 
decrease the radiative dose, then the reconstructed images at the first step would be reduced quality or full of artifacts,  
which makes the following motion correction step almost malfunction. The other strategy is joint image reconstruction and motion 
estimation, namely, incorporating the physical motion into the reconstruction process, in which one establishes 
multiple tasks jointly into one single model, and then gains the optimal solution to reconstruct the image at 
each gate. The main idea of this type of methods is to make use of the sufficient information in the data for each step.  
A lot of approaches have been proposed for how to do this, 
such as those in \cite{ScMoFi09,Br10,BlNaRa12,HiSzWaSaJo12,TaCaSr12,LiZhZhGa15,BoTh16,BuDiFrHaHeSi17,BuDiSch18,LuHuBeAr18,ChGrOz19}. 
Additionally, several methods took the spatiotemporal images as the unknows of the optimization/variational 
models, and considered the temporal regularization by locally adjacent images or in sparse matrix 
form \cite{HaLi10,JiLoDoTi10,GaCaSh11,RiSaKnKa12}. 

In particular, a general framework of joint variational model was proposed for spatiotemporal imaging in \cite{ChGrOz19}. 
Actually, the most important component of that model is for motion estimation using the acquired projection data under the assumption that 
the template is given. This problem is termed sequentially indirect image registration, which is the generalization of indirect 
image registration studied in \cite{ChOz18}. More details on the latter are referred to \cite{OkChDoRaBa16,GrChOk18}. 
To solve the former, a consistent growth model based on \ac{LDDMM} was proposed 
in \cite{ChGrOz19}, which can track the flow of motions with large non-rigid and diffeomorphic deformations. It is well-known that 
the \ac{LDDMM} is a fundamental method for diffeomorphic image registration (\cite{Tro98,DuGrMi98,MiTrYo02,BeMiTrYo05,Yo10,BrHo15}). 

Following the general framework presented in \cite{ChGrOz19}, the current work is dedicated to 
proposing a new joint variational model with diffeomorphic optimal transportation, which is useful  
for the spatiotemporal imaging with large mass-preserving and diffeomorphic deformations. The mass-preserving property was often  
considered to be significant in dynamic \ac{PET} imaging, such as the cardiac imaging \cite{GiRuBu12}. To do so, 
being quite different from the method in \cite{ChGrOz19}, this paper uses the the Wasserstein distance with 
Benamou--Brenier formula in optimal transportation to characterize the optimal transport cost among the mass-preserving 
image flows. For the detailed introduction on the theory and applications of optimal transportation, the readers are referred 
to \cite{BeBr00,Vil03,Am03,HaZhTaAn04,EnFr14,MaRuScSi15,ArChBo17,KarRin17,PeCu18}. Furthermore, this work restricts the unknown velocity 
field into the admissible Hilbert space to generate the flow of diffeomorphisms for tracking the involved motions. 
Numerically, the alternating minimization algorithm is presented for solving the proposed model with \ac{ODE} constraint. 
Moreover, under appropriate conditions, the proposed algorithm provides a new method to 
solve the models using $\LpSpace^2$ Wasserstein distance in Benamou--Brenier formulation, such as the extended 
models with diffeomorphic optimal transportation for (sequential) image registration and the related indirect problems. 

The outline of this paper is organized as follows. \Cref{sec:preliminaries} introduces the required mathematical
preliminaries. The new variational model for joint image reconstruction and motion estimation is presented  
in \cref{sec:OT_recon_model}. The numerical implementation is given in \cref{sec:computed_method} to solve 
the proposed model. The numerical test suites are performed in \cref{sec:numerical_experiments} to 
evaluate the performance of the proposed method. \Cref{sec:Discussion} further discusses several important 
issues about the proposed model and related algorithms. Finally, the paper is concluded by \cref{sec:Conclusions}.

\section{Preliminaries}\label{sec:preliminaries}

First of all, we present the problem setting of spatiotemporal imaging. And then we recall the requisite mathematical tools, 
including the basic concept and related results of Wasserstein distance in optimal transportation, and the flow of diffeomorphisms in \ac{LDDMM}. 

\subsection{Spatiotemporal imaging}
\label{sec:problem_setting}

A general framework for spatiotemporal imaging was presented in the previous 
work \cite{ChGrOz19}, which is based on the deformable template of shape theory \cite{Ar45,GrMi07,Yo10}. 
Here we will give a brief introduction. 

Suppose that the time-dependent image required to be reconstructed is defined by 
\[
\signal \colon [t_0, t_1] \times \domain \to \Real^m,
\] where $m$ (generally $m=1$) is denoted as the modality number, 
$[t_0, t_1] \subset \Real^1$ is expressed as the temporal domain,  and $\domain \subset \Real^n$ (often $n = 2$ or $3$) is 
represented as the spatial domain. The domain $\domain$ is assumed to include the support of the images for all of the time, 
which is a bounded, compact, convex open set with the strong local Lipschitz condition 
throughout the paper. Note that the general (pseudo) time domain $[t_0, t_1]$ can be always reparameterized onto $[0,1]$.
It is well-known that the aim of spatiotemporal imaging is to reconstruct a time-dependent image $\signal(t,\Cdot) \in \RecSpace$ by using  
the acquired data $\data(t,\Cdot) \in\DataSpace$ for $t \in [0, 1]$, where $\RecSpace$ denotes the reconstruction function space, 
and $\DataSpace$ represents the data function space. Correspondingly, the general mathematical formulation is written as 
\begin{equation}\label{eq:InvProb}  
     \data(t,\Cdot) = \ForwardOp\bigl(t, \signal(t,\Cdot)\bigr) + \noisedata(t,\Cdot) \quad \text{for $t \in [0, 1]$}. 
\end{equation}
Here the $\ForwardOp(t, \Cdot) \colon \RecSpace \to \DataSpace$ represents a time-dependent linear or nonlinear 
forward operator, which models the forward process without noise or errors on how the image at time state $t$ generates the data.  
For instance, the forward operator is realized by Radon transform in \ac{CT} and \ac{AC} \ac{PET}; the attenuated Radon transform 
is adopted by \ac{SPECT}; the downsampled Fourier transform is used for \ac{MRI} \cite{LiLa99,Na01}. 
For brevity, we denote $\ForwardOp(t, \Cdot)$ by $\ForwardOp_t$. 
Moreover, the $\noisedata(t,\Cdot) \in \DataSpace$ stands for the uncertain noise contained in data, 

As in \cite{ChGrOz19}, based on the deformable template of shape theory, the time-dependent image can be expressed as
\begin{equation}\label{eq:SeparateImage}
\signal(t,\Cdot) := \DefOpV(\diffeo_t, \template), 
\end{equation}
where $\RecSpace \ni \template \colon \domain \to \Real$ denotes the template that is time-independent spatial 
component of the spatiotemporal image, $\LieGroup \ni \diffeo_t \colon \domain \to \domain$ defines the time-dependent 
deformation that manages the temporal evolution of the template, and the $\LieGroup$ stands for the group of diffeomorphisms on $\domain$. 
The $\DefOpV \colon \LieGroup \times \RecSpace \to \RecSpace$ denotes the temporal evolution operator, 
which is required to be a group action of $\LieGroup$ on $\RecSpace$. 
For short, we rewrite $\DefOpV(\diffeo_t, \template) := \diffeo_t . \template$. In other words, 
given the template $\template$ and the deformation $\diffeo_t$, the spatiotemporal image at 
time $t$ can be generated by $\signal(t,\Cdot)=\diffeo_t . \template$. 
Remark that $\diffeo_0 = \Id$ often denotes the identity deformation (mapping). 
By \cref{eq:SeparateImage}, the spatiotemporal imaging \cref{eq:InvProb} can be translated into 
\begin{equation}\label{eq:InvProb_2}  
     \data(t,\Cdot) = \ForwardOp_t(\diffeo_t . \template) + \noisedata(t,\Cdot) \quad \text{for $t \in [0, 1]$}. 
\end{equation}

There exist two frequently used group actions \cite{ChOz18}. The one that adopted in this work is  
given by the form of mass-preserving deformation 
\begin{equation}\label{eq:MassPreservedDeform}
\diffeo_t . \template = \bigl\vert \Diff(\diffeo_t^{-1}) \bigr\vert \template \circ \diffeo_t^{-1},
\end{equation}
where the ``$\circ$" denotes function composition, and $\vert \Diff(\phi) \vert$ is referred to 
the determinant of the Jacobian of $\phi$. Obviously, this deformation changes the intensity 
of the image but preserves its mass. It is well-known that such deformation is required 
in the framework of optimal transportation. An alternative one is given by geometric deformation, 
which is referred to \cite{ChOz18}. 

As proposed in \cite{ChGrOz19}, the general framework for solving the 
spatiotemporal inverse problem is formulated as  
\begin{equation}\label{eq:VarReg_2}
 \min_{\substack{\template \in \RecSpace \\ \diffeo_t \in \LieGroup}} 
    \int_{0}^{1} \Bigl[\DataDisc_{\ForwardOp_t, \data_t}\bigl(\diffeo_t . \template\bigr)  + \mu_2\RegFunc_2(\diffeo_t) \Bigr]\dint t + \mu_1 \RegFunc_1(\template), 
\end{equation}  
where 
\begin{equation}\label{eq:LDDMM_match_short}
\DataDisc_{\ForwardOp_t, \data_t}\bigl(\diffeo_t . \template\bigr)  :=  \DataDisc\Bigl( \ForwardOp_t\bigl(\diffeo_t . \template\bigr), \data(t,\Cdot) \Bigr)
\end{equation}
and $\mu_1$, $\mu_2$ are positive regularization parameters. 
Here the $\DataDisc \colon \DataSpace \times \DataSpace \rightarrow \Real_+$ acts as the 
data fitting functional. The $\RegFunc_1 \colon \RecSpace \to \Real_+$ is the spatial regularization for imposing 
priori information about the template image. Moreover, the $\RegFunc_2 \colon \LieGroup \to \Real_+$ is the shape regularization 
for constructing the desirable and applicable flow of deformations, which is critical in mathematical 
modelling for spatiotemporal imaging. 

In this work, we will study this problem by combining the Wasserstein distance and the flow of diffeomorphisms.

\subsection{The Wasserstein distance} \label{sec:OT_principle}

The original transportation problem can be traced back to the work of Monge in \cite{Mo81}, which is a 
civil engineering problem that parcels of materials have to be displaced from one site to another one 
with minimal transportation cost \cite{BeBr00}. A modern treatment of this problem has been initiated 
by Kantorovich in \cite{Kan48}, leading to the Monge--Kantorovich problem which has gained extensive 
interests, and also had a broad range of applications in the recent years \cite{BeBr00,Vil03,Am03}. 

The original problem can be stated as follows: given two density distributions 
$\signal_0$ and $\signal_1$  with equal masses of a given material 
(corresponding for instance to an embankment and an excavation), find a transportation 
map $\diffeo: \Real^n \rightarrow \Real^n$ which carries the first distribution 
into the second and minimizes the transportation cost
\begin{equation}\label{eq:costfunc1}
d_p(\signal_0, \signal_1) :=\inf_{\diffeo} \left(\int_{\Real^n} |x-\diffeo(x)|^p\signal_0(x)\dint x\right)^{1/p}, 
\end{equation}
where the condition that the first distribution of mass is carried into the second can be written as
\begin{equation}\label{eq:masspreserv}
\int_{\diffeo^{-1}(B)} \signal_0(x)\dint x = \int_B \signal_1(y)\dint y \quad \text{for $\forall B\subset \mathcal{B}(\Real^n)$},
\end{equation}
or, by the change of variables formula, as
\begin{equation}\label{eq:masspreserv2}
\vert \Diff (\diffeo) (x) \vert \signal_1\bigl(\diffeo(x)\bigr) = \signal_0(x) \quad \text{for~$\mathcal{L}^n$\text{-a.e.}~$x\in B$},
\end{equation}
namely, the \textit{Jacobian equation} (similarly, see the mass-preserving deformation 
in \cref{eq:MassPreservedDeform}), if $\diffeo$ is one to one and sufficiently regular. 
Here $d_p(\signal_0, \signal_1)$ is defined as the so-called \textit{$\LpSpace^p$ Wasserstein (or Kantorovich) distance} 
between $\signal_0$ and $\signal_1$ with fixed $p \ge 1$, 
in which $|\cdot|$ denotes the Euclidean norm in $\Real^n$, and the infimum is taken among all 
map $\diffeo$ transporting $\signal_0$ to $\signal_1$. 
Note that $\mathcal{B}(\Real^n)$ is Borel $\sigma$-algebra of $\Real^n$, and $\mathcal{L}^n$ is Lebesgue 
measure in $\Real^{d}$. If the infimum is achieved by some map $\diffeo^{\ast}$, we say that $\diffeo^{\ast}$ is an optimal 
transfer and solves the \textit{$\LpSpace^p$ Monge--Kantorovich problem (MKP)} \cite{BeBr00,Am03}. 

For the case $p=2$, the above optimal transportation problem can be further reformulated in 
the way inspired by fluid mechanics, which will be useful for the study of spatiotemporal imaging.

\begin{theorem}[\cite{BeBr00}]\label{thm:BB_formula}
Assume that the time-dependent density $\signal(t, x) \ge 0$ and velocity field $\velocityfield(t, x) \in \Real^n$ are 
appropriately smooth, and $\signal_0$ and $\signal_1$ are compactly supported. 
The square of the $\LpSpace^2$ Wasserstein distance equals to 
\begin{equation}\label{eq:Wasserstein_term}
\inf_{\signal \ge 0, \velocityfield}\int_0^1 \int_{\Real^n}\signal(t, x)|\velocityfield(t, x)|^2\dint x\dint t,
\end{equation}
such that 
\begin{equation}\label{eq:pde_constraint}
\begin{cases} 
    \partial_t \signal(t, x) +  \grad\cdot\bigl(\signal(t, x)\, \velocityfield(t, x) \bigr)= 0 & \\[0.5em]
    \signal(0, x) = \signal_0(x), \quad \signal(1, x) = \signal_1(x) &  
   \end{cases} 
   \quad\text{for $x\in \Real^n$ and $0 \leq t \leq 1$.}    
\end{equation}
\end{theorem}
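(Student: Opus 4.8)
The plan is to prove the equality by establishing the two matching inequalities between the dynamic infimum in \cref{eq:Wasserstein_term} and the static cost $d_2^2(\signal_0, \signal_1)$ defined in \cref{eq:costfunc1}. First I would prove the lower bound: every admissible pair $(\signal, \velocityfield)$ obeying the continuity equation \cref{eq:pde_constraint} has kinetic energy at least $d_2^2(\signal_0, \signal_1)$. The key step is to pass from the Eulerian to the Lagrangian picture through the flow $X(t,x)$ defined by the characteristic ODE $\partial_t X(t,x) = \velocityfield\bigl(t, X(t,x)\bigr)$ with $X(0,x) = x$. Under the smoothness assumptions the density is transported as $\signal(t,\cdot) = X(t,\cdot)_{\#}\signal_0$, so the endpoint map $\diffeo := X(1,\cdot)$ carries $\signal_0$ to $\signal_1$ and satisfies the mass constraint \cref{eq:masspreserv}. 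A change of variables turns the energy into $\int_0^1\!\int |\partial_t X(t,x)|^2 \signal_0(x)\,dx\,dt$, and the Cauchy--Schwarz inequality in time along each trajectory gives $\int_0^1 |\partial_t X(t,x)|^2\,dt \ge |X(1,x)-x|^2 = |\diffeo(x)-x|^2$. Integrating against $\signal_0$ and minimizing over admissible transport maps yields the bound $\ge d_2^2(\signal_0,\signal_1)$.

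For the reverse inequality I would exhibit a competitor attaining $d_2^2(\signal_0,\signal_1)$. Invoking Brenier's theorem, let $\diffeo^\ast = \grad\psi$ with $\psi$ convex be the optimal map for the quadratic MKP, and use the displacement interpolation $X(t,x) := (1-t)x + t\,\diffeo^\ast(x)$, which advects each particle along a straight line at constant speed. Convexity of $\psi$ ensures $X(t,\cdot)$ is injective for $t \in [0,1)$, so the velocity prescribed by $\velocityfield\bigl(t, X(t,x)\bigr) := \diffeo^\ast(x) - x$ is well defined, and the push-forward $\signal(t,\cdot) := X(t,\cdot)_{\#}\signal_0$ together with $\velocityfield$ solves \cref{eq:pde_constraint} with the correct endpoints. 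Since each trajectory has constant speed, the energy collapses to $\int |\diffeo^\ast(x)-x|^2 \signal_0(x)\,dx = d_2^2(\signal_0,\signal_1)$, so the infimum is $\le d_2^2(\signal_0,\signal_1)$; combined with the first step this proves the claim.

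The main obstacle is the rigorous Eulerian--Lagrangian identification used in the lower bound: one must guarantee that the characteristic ODE generates a well-defined flow of diffeomorphisms, with positive Jacobian determinant, so that both the change of variables and the push-forward identity $\signal(t,\cdot) = X(t,\cdot)_{\#}\signal_0$ are legitimate. The smoothness and compact-support hypotheses in the statement make this tractable, but controlling the invertibility of $X(t,\cdot)$ is the delicate analytic point. In the reverse direction the analogous difficulty is confined to the terminal time $t=1$, where the interpolating map may degenerate, and is dispatched by the constant-speed structure of the geodesic together with Brenier's characterization of $\diffeo^\ast$.
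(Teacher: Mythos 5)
The paper offers no proof of \cref{thm:BB_formula}: the theorem is quoted from \cite{BeBr00} (see also \cite[Theorem 8.1]{Vil03}), so there is no internal argument to compare yours against. Your sketch reproduces the standard proof of that cited result and is sound in outline. The lower bound --- passing to Lagrangian coordinates along the characteristic flow, using $\signal(t,\cdot)=X(t,\cdot)_{\#}\signal_0$, changing variables so the energy becomes $\int_0^1\!\int|\partial_t X(t,x)|^2\signal_0(x)\dint x\dint t$, and applying Cauchy--Schwarz in time on each trajectory so that this dominates $\int|\diffeo(x)-x|^2\signal_0(x)\dint x$ for the admissible map $\diffeo=X(1,\cdot)$ --- is exactly the Benamou--Brenier argument, and the upper bound via constant-speed displacement interpolation of Brenier's map $\grad\psi$ is the standard energy-matching competitor. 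Two points are worth making explicit if you write this out. First, the identification $\signal(t,\cdot)=X(t,\cdot)_{\#}\signal_0$ amounts to showing that $t\mapsto\bigl\vert \Diff X(t,\cdot)\bigr\vert\,\signal\bigl(t,X(t,\cdot)\bigr)$ is constant along the flow; this is precisely the computation the paper performs in \cref{eq:rightterm} when proving \cref{thm:EquivalenceWD_flow}, so your lower-bound mechanism is the same device the paper later relies on. Second, since the paper defines $d_2$ by the Monge infimum \cref{eq:costfunc1}, you need $\signal_0$ to be an absolutely continuous density for Brenier's theorem to furnish an optimal map and for the Monge and Kantorovich values to coincide; likewise the velocity field you construct from $\grad\psi$ is only as smooth as $\psi$, so a fully rigorous version either regularizes or works with the relaxed (measure-valued) formulation. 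These are exactly the caveats absorbed by the theorem's informal ``appropriately smooth'' hypothesis, and you flag the essential one, so the proposal is correct as a proof of the cited result.
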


The formula in \cref{thm:BB_formula} is also called \textit{Benamou--Brenier formula}, which is referred to \cite[Theorem 8.1]{Vil03}. 
Remarkably, the minimizer of $\LpSpace^2$ MKP is the solution at $t = 1$ to the following \ac{ODE} 
\begin{equation}\label{eq:FlowEq_0}
 \begin{cases} 
    \partial_t \diffeo_t(x) = \velocityfield\bigl( t, \diffeo_t(x) \bigr) & \\[0.5em]
    \diffeo_0(x)=x &  
   \end{cases} 
   \quad\text{for $x\in \Real^n$ and $0 \leq t \leq 1$.}    
\end{equation}

\subsection{The flow of diffeomorphisms} \label{sec:flow_diffeomorphisms}

To begin with, let $\signal_0$ and $\signal_1$ be two density functions compactly supported on $\domain$. 
Here we review a way to produce a flow of diffeomorphisms through a velocity field, 
which has been successfully used in \ac{LDDMM} \cite{BeMiTrYo05,Yo10}. 
Specifically, fixed an appropriate velocity field $\velocityfield \colon [0, 1] \times \domain \to \Real^n$, 
a flow of diffeomorphisms $\diffeo_t$ is produced by the \ac{ODE} below. 
\begin{equation}\label{eq:FlowEq}
 \begin{cases} 
    \partial_t \diffeo_t(x) = \velocityfield\bigl( t, \diffeo_t(x) \bigr) & \\[0.5em]
    \diffeo_0(x)=x &  
   \end{cases} 
   \quad\text{for $x\in \domain$ and $0 \leq t \leq 1$.}    
\end{equation} 
Subsequently,  the required regularity condition will be given for the velocity field.  
To proceed, we first give the concept of admissible space. 

\begin{definition}[\cite{Yo10}]\label{def:Admissible}
If a Hilbert space $\Vspace$ is (canonically) embedded in 
$\Smooth^1_0(\domain, \Real^n)$ with norm $\Vert \cdot \Vert_{1,\infty}$, namely, existing a constant $C>0$ such that
\[    \Vert \vfield \Vert_{1,\infty}  \leq C \Vert \vfield \Vert_{\Vspace}
     \quad\text{for any $\vfield \in \Vspace$,} \]
then the $\Vspace$ is called admissible. Here $\Vert \vfield \Vert_{1,\infty} := \Vert \vfield \Vert_{\infty} + \Vert \Diff\vfield \Vert_{\infty}$ for 
$\vfield \in \Smooth^1_0(\domain, \Real^n)$. 
\end{definition} 

Using the definition of \cref{def:Admissible}, a space of velocity fields is defined as 
\begin{equation}\label{eq:FlowSpace}
  \LpSpace^p([0,1], \Vspace) := 
     \Bigl\{ 
       \text{$\velocityfield  : \velocityfield(t,\cdot) \in \Vspace$ 
       and $\Vert \velocityfield \Vert_{ \LpSpace^p([0,1], \Vspace)}  < \infty$ for $1 \leq p \leq \infty$}.
     \Bigr\}
\end{equation} 
Then the norm is given as 
\[ \Vert \velocityfield \Vert_{\LpSpace^p([0,1], \Vspace)} :=  
      \biggl( \int_0^1 \bigl\Vert \velocityfield(t,\cdot) \bigr\Vert^p_{\Vspace}\dint t \biggr)^{1/p}.
\]  
For simplicity, denote $\LpSpace^p([0,1], \Vspace)$ by $\Xspace{p}$. Particularly, 
for $p = 2$, the $\Xspace{2}$ is a Hilbert space, the inner product of which is defined as  
\[
\langle \velocityfield, \velocityfieldother\rangle_{\Xspace{2}} = \int_0^1 \bigl\langle\velocityfield(t,\cdot), \velocityfieldother(t,\Cdot) \bigr\rangle_{\Vspace}\dint t \quad\text{for $\velocityfield, \velocityfieldother \in \Xspace{2}$}.
\]

More importantly, if the given velocity field is included in $\Xspace{2}$, a flow of diffeomorphisms can be obtained immediately.  
This result is given by the following theorem. 

\begin{theorem}[\cite{Yo10,BrHo15}]\label{thm:FlowDiffeo}
Let $\Vspace$ be an admissible Hilbert space, and the given $\velocityfield \in \Xspace{2}$ be a velocity field. 
Then the \ac{ODE} \cref{eq:FlowEq} has a unique solution 
$\diffeo^{\velocityfield} \in \Smooth^1([0,1] \times \domain, \domain)$, and 
for $t\in [0,1]$, the mapping $\diffeo_t^{\velocityfield}\colon \domain \rightarrow \domain$ is 
a $\Smooth^1$-diffeomorphism on $\domain$.
\end{theorem}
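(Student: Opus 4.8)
The plan is to follow the classical Cauchy--Lipschitz (Picard--Lindel\"of) route, adapted to velocity fields that are only $\LpSpace^2$ in time, exactly as in the LDDMM references cited in the statement. First I would extract the two structural consequences of admissibility. For a.e.\ $t$ the embedding $\Vspace \hookrightarrow \Smooth^1_0(\domain,\Real^n)$ yields $\Vert \velocityfield(t,\Cdot)\Vert_{1,\infty} \le C\,\Vert \velocityfield(t,\Cdot)\Vert_{\Vspace}$, so $\velocityfield(t,\Cdot)$ is bounded and globally Lipschitz in $x$ with Lipschitz constant $L(t) := C\,\Vert\velocityfield(t,\Cdot)\Vert_{\Vspace}$; and since $\velocityfield \in \Xspace{2}$ on the bounded interval $[0,1]$, Cauchy--Schwarz gives $\int_0^1 L(t)\dint t \le C\,\Vert\velocityfield\Vert_{\Xspace{2}} < \infty$, i.e.\ $L \in \LpSpace^1([0,1])$. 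This places the problem squarely in the Carath\'eodory framework: the right-hand side of \cref{eq:FlowEq} is measurable in $t$, continuous in $x$, and controlled by an integrable-in-time Lipschitz bound.

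With these bounds in hand, I would first establish, for each fixed $x \in \domain$, existence and uniqueness of an absolutely continuous solution $t \mapsto \diffeo_t(x)$ solving \cref{eq:FlowEq} for a.e.\ $t$, via the Carath\'eodory existence theorem together with a Gr\"onwall estimate using $L \in \LpSpace^1$. Global existence on all of $[0,1]$ follows from the uniform bound $\Vert\velocityfield(t,\Cdot)\Vert_{\infty} \le L(t)$, and since $\velocityfield(t,\Cdot) \in \Smooth^1_0(\domain,\Real^n)$ vanishes on $\partial\domain$, the boundary is invariant and the trajectory stays inside $\domain$, so $\diffeo_t \colon \domain \to \domain$ is well defined with $\diffeo_0 = \Id$. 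Continuous dependence on the initial point $x$ is a second Gr\"onwall argument, which together with the absolute continuity in $t$ gives joint continuity of $(t,x)\mapsto\diffeo_t(x)$.

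Next I would upgrade to continuous differentiability in the spatial variable. The candidate derivative $\Diff\diffeo_t(x)$ should solve the linearized (variational) equation $\partial_t \Diff\diffeo_t(x) = \Diff\velocityfield\bigl(t,\diffeo_t(x)\bigr)\,\Diff\diffeo_t(x)$ with $\Diff\diffeo_0(x) = \Id$; because $\Diff\velocityfield(t,\Cdot)$ is bounded and continuous in $x$ while its operator norm is again dominated by $L(t) \in \LpSpace^1$, this linear matrix ODE has a unique absolutely continuous solution. The work is to verify that this solution is genuinely the spatial derivative of $\diffeo_t$, which I would do by estimating the difference quotients and showing they converge uniformly. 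Joint continuity of $(t,x)\mapsto\Diff\diffeo_t(x)$ then yields the $\Smooth^1$ regularity claimed, with the time dependence understood as the absolute continuity inherited from $\velocityfield\in\Xspace{2}$.

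Finally, for the diffeomorphism property I would run the flow backward: fixing the terminal time and integrating \cref{eq:FlowEq} in reverse produces a map that, by the identical existence-and-regularity argument, is a $\Smooth^1$ two-sided inverse of $\diffeo_t$, so each $\diffeo_t$ is a bijection of $\domain$ with $\Smooth^1$ inverse. Nonsingularity of $\Diff\diffeo_t$ can be confirmed independently via Liouville's formula, $\det\Diff\diffeo_t(x) = \exp\!\bigl(\int_0^t (\grad\cdot\velocityfield)\bigl(\tau,\diffeo_\tau(x)\bigr)\dint\tau\bigr) > 0$, starting from $\det\Diff\diffeo_0 = 1$. I expect the main obstacle to be the spatial $\Smooth^1$ step: justifying that the solution of the variational equation is the true derivative, and obtaining uniform convergence of the difference quotients, under only $\LpSpace^2$-in-time data rather than the continuous-in-time right-hand side assumed by the textbook smooth-dependence theorems --- this is precisely where the admissibility hypothesis $\Vspace \hookrightarrow \Smooth^1_0(\domain,\Real^n)$ does the essential work.
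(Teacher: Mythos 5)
The paper offers no proof of this theorem; it is quoted verbatim from the cited references \cite{Yo10,BrHo15}, and your proposal correctly reconstructs the standard argument given there: the admissibility embedding plus Cauchy--Schwarz yields an $\LpSpace^1$-in-time Lipschitz bound, the Carath\'eodory/Gr\"onwall machinery gives the unique global flow confined to $\domain$ (since the field vanishes on the boundary), the variational equation gives spatial $\Smooth^1$ regularity, and time reversal together with Liouville's formula gives the $\Smooth^1$ inverse. This is essentially the same route as the referenced proof, so there is nothing to correct.
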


\section{Spatiotemporal imaging with diffeomorphic optimal transportation}
\label{sec:OT_recon_model}

This section proposes a joint variational model based on the thought here termed diffeomorphic optimal transportation.   

Although the minimizer for $\LpSpace^2$ MKP is expected to be one to one and sufficiently regular, the formula of interest  
cannot restrict it into the space with such regularity. 
As we know in \cref{sec:flow_diffeomorphisms}, the \cref{def:Admissible} and \cref{thm:FlowDiffeo} give us the inspiration on 
the regularity (i.e., admissible Hilbert space) that is required for the velocity field to generate a flow of 
diffeomorphisms by the \ac{ODE} \cref{eq:FlowEq}. One method to ensure a Hilbert space being admissible is to construct the space via a 
differential operator $\diffoperator$ (denoting its adjoint as $\diffoperator^{\dag}$) given by 
\begin{equation}\label{eq:diff_operator}
\langle \vfield, \vfieldother\rangle_{\Vspace} := \langle\diffoperator\vfield, \diffoperator\vfieldother\rangle_{ \LpSpace^2} = \langle \diffoperator^{\dag}\diffoperator\vfield, \vfieldother\rangle_{ \LpSpace^2},
\end{equation}
where $\LpSpace^2$ denotes the usual inner product space with square 
integrable vector fields defined on $\domain$.  

The way of choice for $\diffoperator$ is referred to \cite[Example 18]{BrHo15}. For $m > n/2 + 1$, 
the space $H^m(\domain)$ is an admissible space. However, the $\diffoperator$ is usually complex that 
difficultly be used in practice. 

\begin{remark}\label{re:remark1}
A kind of admissible Hilbert spaces is \ac{RKHS}, which is affiliated with a symmetric and positive-definite 
reproducing kernel \cite{Ar50,Yo10}. Assume that the $\Vspace$ is an \ac{RKHS} with a reproducing 
kernel $\kernel \colon \domain \times \domain \to \Matrix_{+}^{d \times d}$. And then a compactly self-adjoint 
operator $\Koperator \colon \LpSpace^2(\domain, \Real^n) \rightarrow \Vspace$ is uniquely defined by 
\[
\langle \vfield, \vfieldother\rangle_{ \LpSpace^2} = \langle \Koperator(\vfield), \vfieldother\rangle_{\Vspace},
\]
where $\Koperator(\vfield) = \int_{\Omega} \kernel(\Cdot, y)\vfield(y) \dint y$. 
Combined with \cref{eq:diff_operator}, the fact for $\vfield \in \Vspace$ is that 
\begin{equation}\label{eq:Koperator_relation}
\Koperator(\diffoperator^{\dag}\diffoperator)\vfield = \vfield.
\end{equation} 
\end{remark}

Consequently, if the \ac{RKHS} is taken into account, the reproducing kernel would be used 
rather than the $\diffoperator$ needs to be given explicitly. 
In what follows the space of vector fields is selected as the \ac{RKHS} with Gaussian reproducing kernel for 
the advantages of sufficient smoothness and fast computability \cite{ChGrOz19}.

\subsection{The proposed model}\label{sec:Wass_based_model} 

Here we propose the variational model for joint image reconstruction and motion estimation in spatiotemporal imaging. 

To guarantee the flow of deformations being diffeomorphic, we restrict the velocity field $\velocityfield$ into 
the admissible Hilbert space $\Xspace{2}$, namely, $\velocityfield \in \Xspace{2}$. By \cref{thm:FlowDiffeo}, the unique solution, denoted 
by $\diffeo_t^{\velocityfield}$ for $0 \leq t \leq 1$, to the \ac{ODE} \cref{eq:FlowEq} is 
determined by the given velocity field $\velocityfield$, which is a flow of $\Smooth^1$-diffeomorphisms 
with $\diffeo_0^{\velocityfield} = \Id$. For ease of description, we adopt the convention 
\begin{equation}\label{eq:FlowRelation}
\gelement{s,t}{\velocityfield} := \diffeo_t^{\velocityfield} \circ (\diffeo_s^{\velocityfield})^{-1} \quad\text{for $0 \leq t,s \leq 1$}. 
\end{equation} 
Using \cref{eq:FlowRelation}, we have  
\begin{equation}\label{eq:diff_trans}
\diffeo_t^{\velocityfield} = \diffeo_{0,t}^{\velocityfield}, \quad (\diffeo_t^{\velocityfield})^{-1} = \diffeo_{t,0}^{\velocityfield}.
\end{equation}  
Subsequently, we state a significant result by the following theorem. 
\begin{theorem}\label{thm:EquivalenceWD_flow} 
Assume that the time-dependent density function $\signal(t, x) \ge 0$ is  
appropriately smooth, the $\Vspace$ is an admissible Hilbert space, 
the velocity field $\velocityfield \in \Xspace{2}$, and $\signal_0$, $\signal_1$ are 
compactly supported on $\domain$. The Benamou--Brenier formula in \cref{thm:BB_formula} is equivalent to 
\begin{equation}\label{eq:high_order_Wasserstein_distance_ODE}
\begin{split}
 &\inf_{\substack{\velocityfield \in \Xspace{2}\\ \diffeo_{0,1}^{\velocityfield} . \template = \signal_1, \template \geq 0}} \int_0^1 \int_{\Omega}\diffeo_{0,t}^{\velocityfield} . \template(x)\vert\velocityfield(t, x)\vert^2\dint x\dint t \\
 & \quad\quad\text{s.t.  $\diffeo_{0,t}^{\velocityfield}$ solves \ac{ODE} \cref{eq:FlowEq},}
  \end{split}
\end{equation}  
where $\template = \signal_0$.
\end{theorem}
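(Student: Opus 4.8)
The plan is to identify the density $\signal(t,\Cdot)$ appearing in the continuity equation \cref{eq:pde_constraint} with the mass-preserving pushforward $\diffeo_{0,t}^{\velocityfield} . \template$ of the template along the flow; this identification is exactly what converts the PDE constraint of the Benamou--Brenier formula into the ODE flow constraint \cref{eq:FlowEq}, with the cost functional transported unchanged.

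First I would prove the forward direction. Fix $\velocityfield \in \Xspace{2}$; by \cref{thm:FlowDiffeo} the flow $\diffeo_{0,t}^{\velocityfield}$ exists, is unique, and is a $\Smooth^1$-diffeomorphism of $\domain$ for every $t$. Put $\template = \signal_0$ and set $\signal(t,\Cdot) := \diffeo_{0,t}^{\velocityfield} . \template$, the pushforward of \cref{eq:MassPreservedDeform}. To check that this $\signal$ solves the continuity equation I would pass to the weak form: for a test function $\psi$, the change of variables in \cref{eq:MassPreservedDeform} gives $\int_\domain \psi(x)\signal(t,x)\dint x = \int_\domain \psi\bigl(\diffeo_{0,t}^{\velocityfield}(y)\bigr)\template(y)\dint y$, and differentiating in $t$ under the integral with the flow equation $\partial_t \diffeo_{0,t}^{\velocityfield} = \velocityfield(t,\diffeo_{0,t}^{\velocityfield})$ produces $\int_\domain \grad\psi\cdot\velocityfield\,\signal\dint x = -\int_\domain \psi\,\grad\cdot(\signal\velocityfield)\dint x$ after integrating by parts. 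Hence $\partial_t\signal + \grad\cdot(\signal\velocityfield)=0$ holds distributionally, and classically by the assumed smoothness. Since $\diffeo_{0,0}^{\velocityfield}=\Id$ gives $\signal(0,\Cdot)=\template=\signal_0$, the terminal datum $\signal(1,\Cdot)=\signal_1$ is precisely the constraint $\diffeo_{0,1}^{\velocityfield} . \template = \signal_1$.

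Next I would establish the converse. Given a smooth nonnegative $\signal$ solving \cref{eq:pde_constraint} with velocity $\velocityfield \in \Xspace{2}$, the admissibility of $\Vspace$ forces $\velocityfield(t,\Cdot)$ to be Lipschitz in space, so the smooth solution of the linear continuity equation with this velocity and initial datum $\signal_0$ is unique. The pushforward $\diffeo_{0,t}^{\velocityfield} . \signal_0$ built above is such a solution, whence $\signal(t,\Cdot) = \diffeo_{0,t}^{\velocityfield} . \template$ for all $t$ and the endpoint condition becomes $\diffeo_{0,1}^{\velocityfield} . \template = \signal_1$. The two problems thus share the same feasible set under the correspondence $\signal(t,\Cdot) \leftrightarrow \diffeo_{0,t}^{\velocityfield} . \template$, and since the cost integrands $\signal(t,x)|\velocityfield(t,x)|^2$ and $(\diffeo_{0,t}^{\velocityfield} . \template)(x)\,|\velocityfield(t,x)|^2$ agree pointwise, the two infima coincide.

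The step I expect to be the main obstacle is reconciling the velocity regularity between the two formulations. The Benamou--Brenier infimum in \cref{thm:BB_formula} ranges over merely smooth velocity fields, whereas the reformulation restricts to $\velocityfield \in \Xspace{2}$, whose admissibility is exactly what \cref{thm:FlowDiffeo} requires to produce a genuine flow of diffeomorphisms. I would argue that, under the compact-support and smoothness hypotheses on $\signal_0$ and $\signal_1$, the minimizing velocity of the quadratic problem is regular enough to belong to an admissible $\Xspace{2}$ (or is approximable in cost by such fields), so that the restriction does not raise the infimum. Pinning down this matching of the two feasible classes is the delicate point; the pushforward characterization, the change of variables, and the integration by parts underlying the PDE-to-ODE substitution are all routine once the flow is known to be diffeomorphic.
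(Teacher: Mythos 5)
Your proposal is correct and follows essentially the same route as the paper: both identify $\signal(t,\Cdot)$ with the mass-preserving pushforward $\diffeo_{0,t}^{\velocityfield} . \template$ along the characteristic flow of the continuity equation, the paper verifying this by showing that $t \mapsto \bigl\vert \Diff\diffeoother(t,\Cdot)\bigr\vert \signal\bigl(t,\diffeoother(t,\Cdot)\bigr)$ is constant in time, while you use the weak formulation together with uniqueness for the linear continuity equation. The ``delicate point'' you flag about reconciling the two velocity classes is not resolved in the paper either --- its hypotheses simply place $\velocityfield \in \Xspace{2}$ on both sides, so no approximation or regularity-of-minimizer argument is attempted there.
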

\begin{proof}
Suppose that $\signal$ and $\velocityfield$ solve \cref{eq:Wasserstein_term}. 
Define the diffeomorphism $\diffeoother(t,\cdot)$ that solves the \ac{ODE} \cref{eq:FlowEq} with the  
given $\velocityfield$. Since $\signal$ satisfies the \ac{PDE} constraint 
in \cref{eq:pde_constraint}, 
considering $t \mapsto \bigl\vert \Diff\diffeoother(t,\Cdot)\bigr\vert \signal\bigl(t, \diffeoother(t,\Cdot)\bigr)$, we have 
\begin{multline}\label{eq:rightterm}
   \dfrac{\dint}{\dint t}\Bigl( \bigl\vert \Diff\diffeoother(t,\Cdot)\bigr\vert \signal\bigl(t, \diffeoother(t,\Cdot)\bigr) \Bigr) \\ 
   = \bigl\vert \Diff\diffeoother(t,\Cdot) \bigr\vert \Bigl[\dfrac{d}{dt}\signal\bigl(t,\Cdot\bigr) +  \Div\Bigl(\signal\bigl(t,\Cdot\bigr) \velocityfield\bigl(t,\Cdot\bigr)\Bigr)\Bigr]\bigl(\diffeoother(t,\Cdot)\bigr) = 0.
\end{multline}
Hence,  $t \mapsto \bigl\vert \Diff\diffeoother(t,\Cdot)\bigr\vert \signal\bigl(t, \diffeoother(t,\Cdot)\bigr)$ 
is constant so in particular we have  
\[  \bigl\vert\Diff\diffeoother(t,\Cdot)\bigr\vert \signal\bigl(t, \diffeoother(t,\Cdot)\bigr) \equiv \signal(0, \Cdot) = \template. \]
Let $\diffeoother_t$ be $\gelement{0,t}{\velocityfield}$. 
Then $\signal\bigl(t, \Cdot\bigr) = \diffeo_{0,t}^{\velocityfield} . \template$.
Hence a solution to \cref{eq:Wasserstein_term} produces a 
solution to \cref{eq:high_order_Wasserstein_distance_ODE}.
It is simple to verify that a solution to \cref{eq:high_order_Wasserstein_distance_ODE} also produces a 
solution to \cref{eq:Wasserstein_term}. 
\end{proof}

Actually, the \cref{eq:FlowEq} is the characteristic \ac{ODE} of the \ac{PDE} \cref{eq:pde_constraint}. 
Note that the \cref{eq:high_order_Wasserstein_distance_ODE} implies that the distance 
between $\signal_0$ and $\signal_1$ can be seen as the transportation cost from $\signal_0$ to $\signal_1$ 
that characterized by the velocity field $\velocityfield(t, \Cdot)$ for $0 \leq t \leq 1$. And the velocity 
field $\velocityfield(t, \Cdot)$ with $t$ from $0$ to $1$ generates the flow of 
diffeomorphisms $\diffeo_{0,t}^{\velocityfield}$, combined with $\template \geq 0$, 
which leads to $\signal(t, \Cdot) = \diffeo_{0,t}^{\velocityfield} . \template$ also with compact support $\domain$ and nonnegativity. 

Inspired by the formulation in \cref{eq:high_order_Wasserstein_distance_ODE} and the strategy in \cite{ChGrOz19}, we construct  
the shape regularization $\RegFunc_2$ for the temporal deformation $\diffeo_t^{\velocityfield}$ by 
\begin{equation}\label{eq:RegTerm_tt}
\RegFunc_2(\diffeo_t^{\velocityfield}) := \int_0^t \int_{\Omega}\diffeo_{0,\tau}^{\velocityfield} . \template(x)\vert\velocityfield(\tau, x)\vert^2\dint x\dint \tau, 
\end{equation}
where the available $\diffeo_{0,\tau}^{\velocityfield} . \template$ is incorporated into the formula above as a weight function. 
Hence the proposed model with \ac{ODE} constraint under the framework in \cref{eq:VarReg_2} is formulated as  
\begin{equation}\label{eq:VarReg_LDDMM_2}
\begin{split}
 &\min_{\substack{\template \in \RecSpace \\ \velocityfield \in \Xspace{2}}} \int_{0}^{1} \left[\DataDisc_{\ForwardOp_t, \data_t}\bigl(\diffeo_{0,t}^{\velocityfield} . \template\bigr)  + 
   \mu_2 \int_0^t \int_{\Omega}\diffeo_{0,\tau}^{\velocityfield} . \template(x)\vert\velocityfield(\tau, x)\vert^2\dint x\dint \tau \right] \dint t   + \mu_1 \RegFunc_1(\template)  \\
 & \quad\,\, \text{s.t.  $\diffeo_{0,t}^{\velocityfield}$ solves \ac{ODE} \cref{eq:FlowEq},}
  \end{split}
\end{equation}   
where $\RecSpace$ denotes a certain space of real-valued functions with appropriate smoothness and nonnegativity. 

The model \cref{eq:VarReg_LDDMM_2} is termed \emph{time-continuous version} of 
the proposed model with \ac{ODE} constraint. Furthermore, this model can be restated as a \ac{PDE}-constrained 
optimal control formulation, which is given by the following theorem. 
\begin{theorem}\label{thm:EquivalencePDEOptProb_2}
Assume that $\RecSpace$ is a space of nonnegative real-valued functions with 
appropriate smoothness and compactly supported on $\domain$. 
Let $\template \in \RecSpace$ and $\signal(t,\Cdot) := \diffeo_{0,t}^{\velocityfield} . \template$ for $0 \leq t \leq 1$, 
where $\diffeo_{0,t}^{\velocityfield}$ solves \ac{ODE} \cref{eq:FlowEq}. 
Then \cref{eq:VarReg_LDDMM_2} equals to  
\begin{equation}\label{eq:LDDMMPDEConstrained_mp_2}
\begin{split}
 &\min_{\substack{\signal(0,\Cdot) \in \RecSpace \\ \velocityfield \in \Xspace{2}}} \int_{0}^{1} \left[\DataDisc_{\ForwardOp_t, \data_t}\bigl(\signal(t,\Cdot)\bigr) 
  +  \mu_2 \int_0^t \int_{\Omega}\signal(\tau,x)\vert\velocityfield(\tau, x)\vert^2\dint x\dint \tau \right] \dint t + \mu_1 \RegFunc_1\bigl(\signal(0,\Cdot)\bigr) \\
 & \quad\,\, \text{s.t. $\partial_t \signal(t, \Cdot) +  \grad\cdot\bigl(\signal(t, \Cdot)\, \velocityfield(t, \Cdot) \bigr)= 0$.}
  \end{split}
\end{equation}  
\end{theorem}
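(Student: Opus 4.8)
The plan is to exhibit an objective-preserving bijection between the feasible sets of \cref{eq:VarReg_LDDMM_2} and \cref{eq:LDDMMPDEConstrained_mp_2} under the natural identification $\template = \signal(0,\Cdot)$ together with a shared velocity field $\velocityfield \in \Xspace{2}$. The decision variables are then in obvious correspondence, so the entire content lies in two points: (i) the ODE-flow constraint of \cref{eq:VarReg_LDDMM_2} and the continuity-equation constraint of \cref{eq:LDDMMPDEConstrained_mp_2} cut out the \emph{same} trajectory $t \mapsto \signal(t,\Cdot)$, and (ii) under this identification the two objectives agree term by term. Both points are essentially already contained in the machinery behind \cref{thm:EquivalenceWD_flow}, so the argument mainly reorganizes that material around the enlarged objective.

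First I would fix $\velocityfield$ and $\template$, let $\diffeo_{0,t}^{\velocityfield}$ be the flow supplied by \cref{eq:FlowEq} and \cref{thm:FlowDiffeo}, and set $\signal(t,\Cdot) := \diffeo_{0,t}^{\velocityfield} . \template$. Expanding the mass-preserving action \cref{eq:MassPreservedDeform} gives $\vert \Diff\diffeo_{0,t}^{\velocityfield}(x) \vert\, \signal\bigl(t, \diffeo_{0,t}^{\velocityfield}(x)\bigr) \equiv \template(x)$, so the quantity differentiated in \cref{eq:rightterm} is constant in $t$; running that computation then shows the bracketed factor $\partial_t \signal + \grad\cdot(\signal\,\velocityfield)$ vanishes at every image point $\diffeo_{0,t}^{\velocityfield}(x)$, and since $\diffeo_{0,t}^{\velocityfield}$ maps $\domain$ onto $\domain$ this is exactly the continuity equation with $\signal(0,\Cdot)=\template$. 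Conversely, given any $\signal$ solving that continuity equation with $\signal(0,\Cdot)=\template$, the identity \cref{eq:rightterm} makes the time-derivative of $\vert\Diff\diffeo_{0,t}^{\velocityfield}\vert\,\signal(t,\diffeo_{0,t}^{\velocityfield})$ vanish, so this quantity stays equal to $\template$, which by \cref{eq:MassPreservedDeform} forces $\signal(t,\Cdot)=\diffeo_{0,t}^{\velocityfield} . \template$. This settles the constraint equivalence in both directions.

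With the trajectories identified I would compare the three pieces of the objective. Substituting $\signal(t,\Cdot)=\diffeo_{0,t}^{\velocityfield} . \template$ turns $\DataDisc_{\ForwardOp_t,\data_t}\bigl(\signal(t,\Cdot)\bigr)$ into $\DataDisc_{\ForwardOp_t,\data_t}\bigl(\diffeo_{0,t}^{\velocityfield} . \template\bigr)$, converts the weighted kinetic-energy integral $\int_0^t\int_\Omega \signal(\tau,x)\vert\velocityfield(\tau,x)\vert^2\,\dint x\,\dint\tau$ into $\int_0^t\int_\Omega \diffeo_{0,\tau}^{\velocityfield} . \template(x)\,\vert\velocityfield(\tau,x)\vert^2\,\dint x\,\dint\tau$, and reduces $\RegFunc_1\bigl(\signal(0,\Cdot)\bigr)$ to $\RegFunc_1(\template)$. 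These are precisely the three summands of \cref{eq:VarReg_LDDMM_2}, so the objective values coincide and therefore the two infima are equal.

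The only genuinely delicate point is the \textbf{uniqueness} required in the PDE-to-ODE direction: I must know that the linear continuity equation driven by the given $\velocityfield$ admits at most one solution for a prescribed initial datum, so that a minimizer of \cref{eq:LDDMMPDEConstrained_mp_2} is \emph{forced} to be the flow push-forward rather than merely one solution among many. This is precisely where the admissibility hypothesis is used: since $\velocityfield \in \Xspace{2}$ and $\Vspace$ is admissible, \cref{thm:FlowDiffeo} produces a $\Smooth^1$ flow whose velocity is Lipschitz in space, which is the regularity under which the transport/continuity equation is well posed with a unique characteristic-transported solution. I would invoke that well-posedness — equivalently, the identity already established through \cref{eq:rightterm} — to close the reverse implication, the existence half being immediate from the forward computation above.
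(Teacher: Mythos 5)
Your argument is correct and follows exactly the route the paper intends: the published proof is a one-line deferral to \cref{thm:EquivalenceWD_flow} (i.e., the identity \cref{eq:rightterm} showing the flow push-forward and the continuity equation determine the same trajectory) together with the analogous result in the earlier work, and your two-directional constraint identification plus term-by-term substitution of the objective is precisely what that deferral expands to. Your explicit observation that the PDE-to-ODE direction needs uniqueness of the continuity-equation solution, supplied by the Lipschitz spatial regularity that admissibility of $\Vspace$ gives the velocity field, is a detail the paper leaves implicit and is correctly placed.
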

\begin{proof}
The proof can be readily obtained by following those of \cref{thm:EquivalenceWD_flow} and \cite[Theorem 3.5]{ChGrOz19}. 
\end{proof}

\begin{remark}\label{rem:mp_choose}
As described in \cref{sec:OT_principle}, the mass-preserving deformation is required under the principle 
of optimal transportation. Hence, we merely consider such deformation in \cref{thm:EquivalencePDEOptProb_2}.  
\end{remark} 

Therefore, the investigation on spatiotemporal imaging can be motivated by  
the perspective of \ac{PDE}-constrained optimal control. The other purpose of the \ac{PDE}-constrained formulation  
is that it can be simply used to compare against the \ac{PDE} based alternatives, 
for instance, the one based on conventional Wasserstein distance in \cite{Br10}. 
More details are provided in \cref{sec:WD_based_model_related}.

\subsection{Time discretization}
\label{sec:time_discretized_version}

The time-discretized version of the proposed model is useful to the practical applications. Without loss of generality, 
assume that the acquired data is collected at equally discretized time point through the gating method. 

We suppose that the sampling is performed on a uniform partition over $[0,1]$ with $\{t_i = i/N\}$ 
for $0 \leq i \leq N$, which acts as the gating grid. Then the time-discretized formulation of the 
general spatiotemporal imaging in \cref{eq:InvProb} becomes 
\begin{equation}\label{eq:InvProb_discrete}  
     \data(t_i,\Cdot) = \ForwardOp_{t_i}\bigl(\signal(t_i,\Cdot)\bigr) + \noisedata(t_i,\Cdot). 
\end{equation}

Hence, one of the time discretization of \cref{eq:VarReg_LDDMM_2} is formulated as 
\begin{equation}\label{eq:time_discretized_VarReg_LDDMM_2}
\begin{split}
 &\min_{\substack{\template \in \RecSpace \\ \velocityfield \in \Xspace{2}}} \frac{1}{N}\sum_{i=1}^{N} \left[\DataDisc_{\ForwardOp_{t_i}, \data_{t_i}}\bigl(\diffeo_{0,t_i}^{\velocityfield} . \template \bigr)  + 
   \mu_2 \int_0^{t_i} \int_{\Omega}\diffeo_{0,\tau}^{\velocityfield} . \template(x)\vert\velocityfield(\tau, x)\vert^2\dint x\dint \tau \right]  + \mu_1 \RegFunc_1(\template)  \\
 & \quad\,\, \text{s.t.  $\diffeo_{0,t}^{\velocityfield}$ solves \ac{ODE} \cref{eq:FlowEq}.}
  \end{split}
\end{equation}  
Actually, using the form \cref{eq:time_discretized_VarReg_LDDMM_2} means no projection data is acquired at $t=0$. 
The unknown template acts as a ``virtual" image, and merely its warped version $\diffeo_{0,t_i}^{\velocityfield} . \template$ 
relates. The ``virtual" image was also considered in \cite{BoTh16} for gated \ac{PET}/\ac{CT} imaging. 

If the projection data is assumed to acquire at $t=0$, then the \cref{eq:VarReg_LDDMM_2} can be time-discretized as 
\begin{equation}\label{eq:time_discretized_VarReg_LDDMM_another}
\begin{split}
 &\min_{\substack{\template \in \RecSpace \\ \velocityfield \in \Xspace{2}}} \frac{1}{N+1}\sum_{i=0}^{N} \left[\DataDisc_{\ForwardOp_{t_i}, \data_{t_i}}\bigl(\diffeo_{0,t_i}^{\velocityfield} . \template \bigr)  + 
   \mu_2 \int_0^{t_i} \int_{\Omega}\diffeo_{0,\tau}^{\velocityfield} . \template(x)\vert\velocityfield(\tau, x)\vert^2\dint x\dint \tau \right]  + \mu_1 \RegFunc_1(\template)  \\
 & \quad\,\, \text{s.t.  $\diffeo_{0,t}^{\velocityfield}$ solves \ac{ODE} \cref{eq:FlowEq},}
  \end{split}
\end{equation}  
where the unknown image at the initial gate acts as the template. 

To get satisfying result, the template in \cref{eq:time_discretized_VarReg_LDDMM_another} should be 
reconstructed as accurate as possible. However, for \cref{eq:time_discretized_VarReg_LDDMM_2}, 
even if the reconstructed ``virtual" template is not so accurate, the velocity field can be used to make correction 
to obtain the desirable sequential images $\diffeo_{0,t_i}^{\velocityfield} . \template$ ($1 \leq i \leq N$) to some extent. 
In this article, the time-discretized scheme \cref{eq:time_discretized_VarReg_LDDMM_2} is adopted  
to perform numerical implementation.

\subsection{Comparison with several existing alternatives}
\label{sec:WD_based_model_related}

In this section, the analytical comparison will be conducted between the proposed model and 
the existing alternatives.  

\subsubsection{The joint variational model based on Wasserstein distance} 

A variational model based on optimal transportation was proposed for joint motion estimation and 
image reconstruction in \cite{Br10}, which is also based on the $\LpSpace^2$ Wasserstein distance but in different framework.  
This model is formulated as the following \ac{PDE}-constrained optimal control problem, which is written as  
\begin{equation}\label{eq:Wasserstein_distancePDEConstrained}
\begin{split}
 &\min_{\substack{\signal \ge 0, \velocityfield}} \int_{0}^{1} \left[\DataDisc_{\ForwardOp_t, \data_t}\bigl(\signal(t,\Cdot)\bigr)   
  + \mu_1 \RegFunc_1\bigl(\signal(t,\Cdot)\bigr) +  \mu_2 \int_{\Omega}\signal(t, x)\vert\velocityfield(t, x)\vert^2\dint x \right]  \dint t \\
 &\quad \text{s.t. $\partial_t \signal(t, \Cdot) +  \grad\cdot\bigl(\signal(t, \Cdot)\, \velocityfield(t, \Cdot) \bigr)= 0$.}
  \end{split}
\end{equation}  

It is easy to figure out that the \ac{PDE} constraints in \cref{eq:LDDMMPDEConstrained_mp_2} 
and \cref{eq:Wasserstein_distancePDEConstrained} are the same. Through analysis, the obvious distinctions  
between them relate to the constrained space with respect to the velocity field $\velocityfield$ and the selection of 
the shape regularization term $\RegFunc_2$. 

As given in \cref{eq:Wasserstein_distancePDEConstrained}, the constrained space of velocity fields is  
$\LpSpace^2([0, 1], \domain)$, which leads to the velocity field lacking sufficient smoothness on $\domain$. 
In contrast to \cref{eq:Wasserstein_distancePDEConstrained}, 
the space of velocity fields in \cref{eq:LDDMMPDEConstrained_mp_2} is restricted in $\Xspace{2}$, which means  
the velocity field at every time point located in an admissible Hilbert space, namely, a sufficiently smooth vector-valued 
function distributed on $\Omega$. 
This guarantees the non-rigid diffeomorphic deformations, to some extent, which fulfils the physical 
mechanism \cite{GiRuBu12,BuMoRu13}. 
Moreover,  the regularization on $\velocityfield$ in \cref{eq:Wasserstein_distancePDEConstrained} is merely  
achieved by the $\RegFunc_2(\diffeo_t)$ at $t=1$. However, 
the model \cref{eq:LDDMMPDEConstrained_mp_2} make use of $\RegFunc_2(\diffeo_t)$ for all of the time. 

Besides those above, both approaches also differ in the selection of regularization 
term $\RegFunc_1$. The \cref{eq:LDDMMPDEConstrained_mp_2} only poses regularization on 
the initial image $\signal(0, \Cdot)$, whereas in \cref{eq:Wasserstein_distancePDEConstrained} the whole 
time trajectory $t \mapsto \signal(t, \Cdot)$ is regularized. 
Bear in mind that the $\signal(t, \Cdot)$ is determined by $\signal(0, \Cdot)$ and $\velocityfield$. 
Hence, \cref{eq:LDDMMPDEConstrained_mp_2} 
not only has a simpler formulation, but also gets the advantage for numerical implementation. 

For further comparison, an equivalent result is given by the following theorem if 
we change the constrained space of velocity fields into $\Xspace{2}$ for \cref{eq:Wasserstein_distancePDEConstrained}. 

\begin{theorem}\label{thm:EquivalencePDEOpt_ODEOpt}
Suppose that the assumptions in \cref{thm:EquivalenceWD_flow} hold. Let $\RecSpace$ be a space of nonnegative real-valued functions with 
appropriate smoothness and compactly supported on $\domain$, and $\signal(0, \Cdot) \in \RecSpace$, which is denoted by 
template $\template$. Then \cref{eq:Wasserstein_distancePDEConstrained} is equivalent to 
\begin{equation}\label{eq:Wasserstein_distancePDEConstrained_ode}
\begin{split}
 &\min_{\substack{\template \in \RecSpace \\ \velocityfield \in \Xspace{2}}} \int_{0}^{1} \left[\DataDisc_{\ForwardOp_t, \data_t}\bigl(\diffeo_{0,t}^{\velocityfield} . \template\bigr)   + \mu_1 \RegFunc_1\bigl(\diffeo_{0,t}^{\velocityfield} . \template\bigr) 
  +  \mu_2 \int_{\Omega}\diffeo_{0,t}^{\velocityfield} . \template(x)\vert\velocityfield(t, x)\vert^2\dint x \right] \dint t \\
 &\quad\,\, \text{s.t.  $\diffeo_{0,t}^{\velocityfield}$ solves \ac{ODE} \cref{eq:FlowEq},}
  \end{split}
\end{equation}
where $\diffeo_{0,t}^{\velocityfield} . \template$ is defined as \cref{eq:MassPreservedDeform}. 
\end{theorem}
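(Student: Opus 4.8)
The plan is to reuse the characteristic correspondence already established inside the proof of \cref{thm:EquivalenceWD_flow}: for a fixed velocity field $\velocityfield \in \Xspace{2}$, the continuity equation $\partial_t \signal(t,\Cdot) + \grad\cdot\bigl(\signal(t,\Cdot)\,\velocityfield(t,\Cdot)\bigr) = 0$ together with the initial datum $\signal(0,\Cdot) = \template$ admits the unique solution $\signal(t,\Cdot) = \diffeo_{0,t}^{\velocityfield} . \template$, where the right-hand side is the mass-preserving deformation \cref{eq:MassPreservedDeform} and $\diffeo_{0,t}^{\velocityfield}$ solves the flow \ac{ODE} \cref{eq:FlowEq}. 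Because we are now working with the modified version of \cref{eq:Wasserstein_distancePDEConstrained} in which $\velocityfield$ is constrained to $\Xspace{2}$, \cref{thm:FlowDiffeo} guarantees that $\diffeo_{0,t}^{\velocityfield}$ is a genuine $\Smooth^1$-flow of diffeomorphisms on all of $[0,1]$, so this state identity is meaningful throughout.

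First I would set up the two directions of the correspondence between feasible pairs. Given an admissible $(\signal, \velocityfield)$ for \cref{eq:Wasserstein_distancePDEConstrained}, put $\template := \signal(0,\Cdot)$; the computation \cref{eq:rightterm} shows that $t \mapsto \bigl\vert \Diff\diffeo_{0,t}^{\velocityfield} \bigr\vert\,\signal\bigl(t, \diffeo_{0,t}^{\velocityfield}(\Cdot)\bigr)$ is constant, whence $\signal(t,\Cdot) = \diffeo_{0,t}^{\velocityfield} . \template$ and $\template$ inherits nonnegativity and the smoothness needed to belong to $\RecSpace$. Conversely, given an admissible $(\template, \velocityfield)$ for \cref{eq:Wasserstein_distancePDEConstrained_ode}, define $\signal(t,\Cdot) := \diffeo_{0,t}^{\velocityfield} . \template$; then $\signal \ge 0$ because the mass-preserving deformation preserves nonnegativity, and reversing the differentiation in \cref{eq:rightterm} shows that $\signal$ solves the continuity equation with $\signal(0,\Cdot)=\template$. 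These two assignments are mutually inverse, so they establish a bijection between the feasible sets of the two problems that share the same $\velocityfield$.

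Next I would verify that the objective functionals agree term by term under the substitution $\signal(t,\Cdot) = \diffeo_{0,t}^{\velocityfield} . \template$. Each of the three integrands --- the data discrepancy $\DataDisc_{\ForwardOp_t, \data_t}\bigl(\signal(t,\Cdot)\bigr)$, the spatial regularizer $\mu_1 \RegFunc_1\bigl(\signal(t,\Cdot)\bigr)$, and the transport-cost density $\mu_2 \int_{\Omega}\signal(t,x)\vert\velocityfield(t,x)\vert^2\dint x$ --- depends on the state only through $\signal(t,\Cdot)$, so replacing $\signal(t,\Cdot)$ by $\diffeo_{0,t}^{\velocityfield} . \template$ leaves every integrand, and hence the outer integral $\int_0^1[\cdots]\dint t$, numerically unchanged. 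Therefore the two functionals take identical values on corresponding feasible pairs; since the correspondence is a cost-preserving bijection, the infima coincide and minimizers are transported to minimizers in both directions, which is precisely the claimed equivalence.

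In effect the analytical core is inherited wholesale from \cref{thm:EquivalenceWD_flow}, and the only new ingredient is the trivial observation that the additional data and $\RegFunc_1$ terms are carried along intact once the state identity is available. The one point that warrants care --- and the nearest thing to an obstacle --- is the regularity matching across the two formulations: one must confirm that $\signal(0,\Cdot)$ extracted from a PDE-feasible state really lands in $\RecSpace$, and that the flow-generated $\diffeo_{0,t}^{\velocityfield} . \template$ is a sufficiently smooth, bona fide solution of the continuity equation. Both facts lean on the admissibility of $\Vspace$ and on $\velocityfield \in \Xspace{2}$ via \cref{thm:FlowDiffeo}, exactly as in the hypotheses imported from \cref{thm:EquivalenceWD_flow}.
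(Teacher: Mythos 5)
Your proposal is correct and follows essentially the same route as the paper, which simply states that the proof is obtained by following those of \cref{thm:EquivalenceWD_flow} and \cref{thm:EquivalencePDEOptProb_2}; you have merely written out the details the paper leaves implicit, namely the characteristic-flow correspondence $\signal(t,\Cdot) = \diffeo_{0,t}^{\velocityfield} . \template$ in both directions and the term-by-term equality of the objectives. You also correctly flag that the equivalence is with the $\Xspace{2}$-constrained variant of \cref{eq:Wasserstein_distancePDEConstrained}, which matches the paper's intent stated just before the theorem.
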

\begin{proof}
The proof can be readily obtained by following those of \cref{thm:EquivalenceWD_flow} and \cref{thm:EquivalencePDEOptProb_2}. 
\end{proof} 

Let $\GoalFunctionalV_{W}(\template, \velocityfield)$ be the objective function 
in \cref{eq:Wasserstein_distancePDEConstrained_ode}, and $\Vspace$ be an \ac{RKHS}. Following the derivations of the 
proof in \cref{thm:energy_functional_derivative_2}, 
we get the $\Xspace{2}$-gradient with regard to the velocity field $\velocityfield$ as 
\begin{multline}\label{eq:compare_Wasserstein_velocityfield}
\grad^{\,\Vspace}_{\velocityfield}\GoalFunctionalV_{W}(\template, \velocityfield)(t,\Cdot)  \\
    = \Koperator\biggl(\diffeo_{0,t}^{\velocityfield} . \template \int_{t}^{1} \grad\Bigl(\Bigl[\partial\DataDisc_{\ForwardOp_{\tau}, g_{\tau}} \bigl(\diffeo_{0,\tau}^{\velocityfield} . \template\bigr) + \mu_1 \partial \RegFunc_1\bigl(\diffeo_{0,\tau}^{\velocityfield} . \template\bigr)  + \mu_2 \vert\velocityfield(\tau, \cdot)\vert^2\Bigr]\bigl( \gelement{t,\tau}{\velocityfield}\bigr)\Bigr) \dint \tau  \\ 
    + 2\mu_2\diffeo_{0,t}^{\velocityfield} . \template\, \velocityfield(t,\Cdot) \biggr)
\end{multline}
for $0\leq t \leq 1$ and the gradient (i.e., $\LpSpace^2$-gradient if not indicated) in terms of the template $\template$ as 
\begin{equation}\label{eq:compare_Wasserstein_template}
\grad_{\template}\GoalFunctionalV_{W}(\template, \velocityfield) 
    = \int_{0}^{1} \Bigl[\partial\DataDisc_{\ForwardOp_t, g_t} \bigl(\diffeo_{0,t}^{\velocityfield} . \template\bigr) + \mu_1 \partial \RegFunc_1\bigl(\diffeo_{0,t}^{\velocityfield} . \template\bigr)  
    + \mu_2 \vert\velocityfield(t, \cdot)\vert^2\Bigr]\bigl( \gelement{0,t}{\velocityfield}\bigr) \dint t. 
\end{equation}

Assume that $(\template^{\ast}, \velocityfield^{\ast})$ is the solution to the problem \cref{eq:Wasserstein_distancePDEConstrained_ode}, 
which should satisfy the following optimality conditions 
 \begin{equation}\label{eq:optimality_conditions_Wasserstein}
 \begin{cases} 
    \grad^{\,\Vspace}_{\velocityfield}\GoalFunctionalV_{W}(\template^{\ast}, \velocityfield^{\ast}) = 0, & \\[0.5em]
    \grad_{\template}\GoalFunctionalV_{W}(\template^{\ast}, \velocityfield^{\ast}) - \lambda^{\ast} = 0, & \\[0.5em]
    \lambda^{\ast} \geq 0, \quad \template^{\ast} \geq 0, \quad \lambda^{\ast}\template^{\ast} = 0, 
   \end{cases} 
\end{equation}
where $\lambda^{\ast}$ is the function of Lagrange multiplier.

In particular, we consider the optimal velocity field at the end points $t = 0$ and $t = 1$. If $t = 0$ and $1$, 
using \cref{eq:compare_Wasserstein_velocityfield}--\cref{eq:optimality_conditions_Wasserstein}, 
and considering the symmetric and positive-definite reproducing kernel, then we have 
\[
\template^{\ast}\velocityfield^{\ast}(0,\Cdot) = 0 \quad \text{and} \quad \diffeo_{0,1}^{\velocityfield^{\ast}} . \template^{\ast}\velocityfield^{\ast}(1,\Cdot) = 0.
\]
Hence $\velocityfield^{\ast}(0,\Cdot)$ and $\velocityfield^{\ast}(1,\Cdot)$ are vanishing on the supports 
of $\template^{\ast}$ and $\diffeo_{0,1}^{\velocityfield^{\ast}} . \template^{\ast}$ respectively. 
Namely, the optimal velocity field that minimizes \cref{eq:Wasserstein_distancePDEConstrained_ode} 
is vanishing on the supports of the associated images to be reconstructed at the end time points. 

On the other hand, suppose that $(\bar{\template}, \bar{\velocityfield})$ is the solution to the 
problem \cref{eq:VarReg_LDDMM_2}, combined with \cref{thm:energy_functional_derivative_2}, 
which fulfils the following optimality conditions 
 \begin{equation}\label{eq:optimality_conditions_proposed}
 \begin{cases} 
    \grad^{\,\Vspace}_{\velocityfield}\GoalFunctionalV_{C}(\bar{\template}, \bar{\velocityfield}) = 0, & \\[0.5em]
    \grad_{\template}\GoalFunctionalV_{C}(\bar{\template}, \bar{\velocityfield}) - \bar{\lambda} = 0, & \\[0.5em]
    \bar{\lambda} \geq 0, \quad \bar{\template} \geq 0, \quad \bar{\lambda}\bar{\template} = 0, 
   \end{cases} 
\end{equation}
where $\bar{\lambda}$ is the associated Lagrange multiplier function. Using \cref{eq:optimality_conditions_proposed} and 
\cref{thm:energy_functional_derivative_2}, we immediately observe that the optimal velocity field that 
minimizes \cref{eq:VarReg_LDDMM_2} is unnecessarily vanishing on the supports of the reconstructed images at the  
end time points. Following \cref{thm:energy_functional_time_discretized_derivative}, 
the optimal velocity field to the related time-discretized model is also unnecessarily vanishing at the same domains as the above. 
This statement has been also demonstrated numerically by the computed optimal velocity field in \cref{Test_suite_1:display_velocity_field}. 
Hence, this implies the consistency between the time-continuous model and its associated time-discretized counterpart.

\subsubsection{The joint variational model based on \ac{LDDMM}}
\label{sec:diffeomorphic_motion_model }

We then recall the \ac{LDDMM} based model for joint image reconstruction and motion estimation in \cite{ChGrOz19}, 
which implies an \ac{LDDMM} consistent growth model. Using the same notation, that model with mass-preserving deformation 
in the form of \ac{PDE}-constrained optimal control can be written as  
\begin{equation}\label{eq:LDDMMPDEConstrained_mp_19}
\begin{split}
 &\min_{\substack{\signal(0,\Cdot) \in \RecSpace \\ \velocityfield \in \Xspace{2}}} \int_{0}^{1} \left[\DataDisc_{\ForwardOp_t, \data_t}\bigl(\signal(t,\Cdot)\bigr) 
  +  \mu_2 \int_0^t \int_{\Omega}\vert\diffoperator\velocityfield(\tau, x)\vert^2\dint x\dint \tau \right] \dint t + \mu_1 \RegFunc_1\bigl(\signal(0,\Cdot)\bigr) \\
 & \quad\,\, \text{s.t. $\partial_t \signal(t, \Cdot) +  \grad\cdot\bigl(\signal(t, \Cdot)\, \velocityfield(t, \Cdot) \bigr)= 0$.}
  \end{split}
\end{equation}  

It is obvious that both \cref{eq:LDDMMPDEConstrained_mp_2} 
and \cref{eq:LDDMMPDEConstrained_mp_19} constrain the velocity field into the $\Xspace{2}$ space. 
However, the difference is situated on the choice of shape regularization $\RegFunc_2(\diffeo_t)$ for $0\leq t \leq 1$. 
Under the framework of shape theory, in \cref{eq:LDDMMPDEConstrained_mp_19} the squared shape distance acts as the shape regularization 
term (see \cite{ChGrOz19}). No any weight is involved or all of the involved weight values are one. 
It would make sense if the deformation, for instance the geometric deformation, merely moves the position of the 
pixel/voxel but does not change its intensity. 

As stated in \cref{sec:problem_setting}, the mass-preserving non-rigid deformation not only moves the position of 
the pixel/voxel but also changes its intensity. In contrast, the proposed model uses the squared $\LpSpace^2$ Wasserstein distance. 
Particularly, a time-dependent weight function is introduced into the new shape regularization term under the framework of 
optimal transportation, which is chosen as the unknown time-series image/density $\signal(t,\Cdot)$. 
In other words, the weight is the nonnegative density $\signal(t,\Cdot)$, which is positive on the support of $\signal(t,\Cdot)$, otherwise is zero. 
This implies the penalty is only put on the range of the objects in the image, which equivalently means the transportation 
cost is just originated from the sites having the objects to be transported. 

This also demonstrates that the proposed model using diffeomorphic optimal transport combines the 
thoughts of \ac{LDDMM} and optimal transportation.  Specially, the other alternative is further constructed 
in \cref{sec:Alternative_model}. Through analyzing these models, the relationship 
between \ac{LDDMM} and optimal transportation would be more clear.

\section{Numerical implementation}
\label{sec:computed_method}

To validate the proposed model, we consider a specific example using the time-discretized 
model \cref{eq:time_discretized_VarReg_LDDMM_2}. The detailed numerical implementation will be presented.

\subsection{A specific example}\label{sec:pratical_example}

As a specific example in \ac{CT}, the data fidelity term is usually constructed  
as the squared $\LpSpace^2$-norm, and the spatial regularization is often 
selected as the \ac{TV} functional. More clearly, 
\begin{align}
\label{eq:data_fidelity_realized}\DataDisc_{\ForwardOp_{t_i}, \data_{t_i}}\bigl(\diffeo_{0,t_i}^{\velocityfield} . \template \bigr) &:= \|\ForwardOp_{t_i}\bigl(\diffeo_{0,t_i}^{\velocityfield} . \template\bigr) - \data(t_i,\Cdot)\|_2^2, \\
\label{eq:spatial_regularization_realized}\RegFunc_1(\template) &:= \|\nabla \template\|_1, 
\end{align}
then \cref{eq:time_discretized_VarReg_LDDMM_2} is specified by 
\begin{equation}\label{eq:VarReg_LDDMM_time_discretized}
\begin{split}
 &\min_{\substack{\template \in \RecSpace \\ \velocityfield \in \Xspace{2}}} \frac{1}{N}\sum_{i=1}^{N}\left[ \|\ForwardOp_{t_i}\bigl(\diffeo_{0,t_i}^{\velocityfield} . \template\bigr) - \data(t_i,\Cdot)\|_2^2  + 
   \mu_2 \int_0^{t_i} \int_{\Omega}\diffeo_{0,\tau}^{\velocityfield} . \template(x)\vert\velocityfield(\tau, x)\vert^2\dint x\dint \tau \right]  + \mu_1 \|\nabla \template\|_1 \\
 & \quad\,\, \text{s.t.  $\diffeo_{0,t}^{\velocityfield}$ solves \ac{ODE} \cref{eq:FlowEq}.}
  \end{split}
\end{equation}  
Here the mass-preserving deformation is applied as mentioned previously, i.e., 
\[
\diffeo_{0,t}^{\velocityfield} . \template : = \bigl\vert \Diff(\diffeo_{t,0}^{\velocityfield}) \bigr\vert \template \circ \diffeo_{t,0}^{\velocityfield},
\] 
and $\RecSpace$ is assumed to be the nonnegative $BV(\Omega)$. 

Remark that the proposed model serves as a general model to deal with the motion compensated image reconstruction in various 
imaging modalities in spatiotemporal setting. One would select the required data fidelity and spatial regularization for different 
imaging modalities. The following algorithm also can be presented in the general scheme. 

We apply the alternating minimization algorithm to solve the 
model \cref{eq:VarReg_LDDMM_time_discretized} for the involved variables being mutually coupled.  
More specifically, fixed the velocity field $\velocityfield$, the flow of 
diffeomorphisms $\diffeo_{0,t}^{\velocityfield}$ is generated by the \ac{ODE} \cref{eq:FlowEq}. 
Then, the original problem \cref{eq:VarReg_LDDMM_time_discretized} boils down  
to the following modified static image reconstruction problem 
\begin{equation}\label{eq:VarReg_LDDMM_template_time_discrete2}
 \min_{\template \in \RecSpace}\frac{1}{N}\sum_{i=1}^{N} \left[\|\ForwardOp_{t_i}\bigl(\diffeo_{0,t_i}^{\velocityfield} . \template\bigr) - \data(t_i,\Cdot)\|_2^2  + 
   \mu_2 \int_0^{t_i} \int_{\Omega}\diffeo_{0,\tau}^{\velocityfield} . \template(x)\vert\velocityfield(\tau, x)\vert^2\dint x\dint \tau \right]  + \mu_1 \|\nabla \template\|_1.
 \end{equation} 
In contrast, given the template $\template$, then original problem \cref{eq:VarReg_LDDMM_time_discretized} 
reduces to a sequentially indirect image registration, where we estimate the velocity 
field $\velocityfield$ from the time-series data that are indirect observations of the target by 
\begin{equation}\label{eq:VarReg_LDDMM_deformation_time_discrete2}
\begin{split}
 & \min_{\velocityfield \in \Xspace{2}}\frac{1}{N}\sum_{i=1}^{N} \left[\|\ForwardOp_{t_i}\bigl(\diffeo_{0,t_i}^{\velocityfield} . \template\bigr) - \data(t_i,\Cdot)\|_2^2  + 
   \mu_2 \int_0^{t_i} \int_{\Omega}\diffeo_{0,\tau}^{\velocityfield} . \template(x)\vert\velocityfield(\tau, x)\vert^2\dint x\dint \tau \right]  \\
 &\quad\,\,  \text{s.t.  $\diffeo_{0,t}^{\velocityfield}$ solves \ac{ODE} \cref{eq:FlowEq}.}
  \end{split}
  \end{equation}
Let $\GoalFunctionalV_{\velocityfield} \colon \RecSpace \to \Real$ and 
$\GoalFunctionalV_ {\template} \colon \Xspace{2} \to \Real$ be the objective 
functionals in \cref{eq:VarReg_LDDMM_template_time_discrete2} and \cref{eq:VarReg_LDDMM_deformation_time_discrete2}, respectively. 
We figure out \cref{eq:VarReg_LDDMM_time_discretized} by solving 
for \cref{eq:VarReg_LDDMM_template_time_discrete2} and \cref{eq:VarReg_LDDMM_deformation_time_discrete2} alternately, i.e.,
\begin{equation}\label{eq:Alternating}
\begin{cases}
  \template^{k+1} := \text{the solution to \cref{eq:VarReg_LDDMM_template_time_discrete2} with fixed $\velocityfield = \velocityfield^k$,} \\[0.5em]
  \velocityfield^{k+1} := \text{the solution to \cref{eq:VarReg_LDDMM_deformation_time_discrete2} with fixed $\template = \template^{k+1}$,} 
\end{cases}
\end{equation}
or by the scheme via changing the updating order in \cref{eq:Alternating}.

\subsection{Template reconstruction}
\label{sec:Template_reconstruction}

In what follows we construct the algorithm for template reconstruction by solving the static 
image reconstruction problem in \cref{eq:VarReg_LDDMM_template_time_discrete2}. 

The subproblem in \cref{eq:VarReg_LDDMM_template_time_discrete2} is a 
nonsmooth minimization. We modify the nonsmooth \ac{TV} term into the smooth one as 
\begin{equation}\label{eq:template_recon_CT} 
\|\nabla \template\|_1 \approx \int_{\domain}\vert \nabla \template(x) \vert_{2, \epsilon}\dint x := \int_{\domain} \Bigl(\sum_i\bigl(\partial_i\template(x)\bigr)^2 + \epsilon\Bigr)^{1/2}\dint x, 
\end{equation}
where $\epsilon > 0$ is sufficiently small, e.g., $\epsilon = 10^{-12}$. This also implies that 
we reconsider a smoothed model of \cref{eq:VarReg_LDDMM_time_discretized} by the modification above, 
which is a often used smoothed strategy for \ac{TV} regularization in image reconstruction. 

Then using \cref{thm:energy_functional_time_discretized_derivative}, 
the smoothed version of \cref{eq:VarReg_LDDMM_template_time_discrete2} can be solved by the following 
projected gradient descent scheme: 
\begin{equation}\label{eq:soc4}
\template^{k+1} = \textsf{Proj}_{\geq 0}\biggl\{\template^k - \alpha^k\grad\GoalFunctionalV_{\velocityfield}(\template^k)\biggr\}, 
\end{equation}
where  \[
 \grad\GoalFunctionalV_{\velocityfield}(\template)
    =  \frac{1}{N}\sum_{i=1}^{N}\Bigl(h_{0, t_i}^{\template, \velocityfield}  + \mu_2 \eta_{0, t_i}^{\velocityfield} \Bigr) + \mu_1\grad^{\,\ast}\biggl(\frac{\grad \template}{\vert \nabla \template \vert_{2, \epsilon}}\biggr). 
  \]
Here $\textsf{Proj}_{\geq 0}$ means the projection operator onto the space with nonnegativity, and $\alpha^k$ the stepsize for the $k$-th iteration. Furthermore, by \cref{eq:data_fidelity_realized}, and \cref{eq:h_t_ti_general}, \cref{eq:eta_t_ti_general}, for $t_i \geq t$, we have 
\begin{align}\label{eq:h_t_ti_concrete}
h_{t, t_i}^{\template, \velocityfield} &= 2\ForwardOpAdjoint_{t_i}\bigl(\ForwardOp_{t_i}(\diffeo_{0, t_i}^{\velocityfield}.\template) - \data(t_i,\Cdot) \bigr)(\diffeo_{t,t_i}^{\velocityfield}), \\
\label{eq:eta_t_ti_concrete}
\eta_{t, t_i}^{\velocityfield} &= \int_{t}^{t_i} \vert\velocityfield(\iota, \cdot)\vert^2\bigl( \gelement{t,\iota}{\velocityfield}\bigr)\dint \iota, 
\end{align}
where $\mathcal{T}$ is assumed to be linear, and $\mathcal{T}^{\ast}$ denotes its adjoint operator. 
The numerical implementation for the scheme \cref{eq:soc4} is 
given in \cref{algo:GDSB_4DCT}.

For solving the nonsmooth problem above, the convex optimization techniques can be applied but
need to introduce more auxiliary variables and parameters than the above algorithm. 
As did in \cite{ChGrOz19}, to optimize the whole problem \cref{eq:VarReg_LDDMM_time_discretized} 
efficiently, we still employ the iterative scheme \cref{eq:soc4} to solve the subproblem.

\subsection{Velocity field estimation}
\label{sec:GradientDescent_velocithfield}

Here we present an algorithm for solving the sequentially indirect image 
registration \cref{eq:VarReg_LDDMM_deformation_time_discrete2}. To guarantee the velocity field 
constrained in $\Xspace{2}$, and then resulting a flow of diffeomorphisms through \ac{ODE} \cref{eq:FlowEq}, we use the 
gradient descent scheme based on $\Xspace{2}$-gradient. By \cref{thm:energy_functional_time_discretized_derivative}, 
the scheme is written as  
\begin{equation}\label{eq:gradientflow}
  \velocityfield^{k+1} =  \velocityfield^k - \beta^k\grad^{\,\Vspace}\!\!\GoalFunctionalV_{\template}(\velocityfield^k),  
\end{equation}
where  
\begin{equation*}
 \grad^{\,\Vspace}\!\!\GoalFunctionalV_{\template}(\velocityfield)(t,\Cdot) 
    =  \frac{1}{N}\sum_{\{i\geq 1 : t_i \geq t\}} \Koperator\biggl( \gelement{0, t}{\velocityfield} . \template \Bigl[\grad \bigl(h_{t, t_i}^{\template,\velocityfield}  + \mu_2 \eta_{t, t_i}^{\velocityfield}\bigr)  + 2\mu_2 \velocityfield_{t, t_i}\Bigr]\biggr), 
  \end{equation*}
and $\beta^k$ is the stepsize in the $k$-th iteration, by \cref{eq:Middle_func_deriv_2_general}, for $t_i \geq t$,
\begin{equation}
\label{eq:Middle_func_deriv_2_concrete}
\velocityfield_{t, t_i} = \velocityfield(t,\Cdot). 
\end{equation}
Here $h_{t, t_i}^{\template,\velocityfield}$ and $\eta_{t, t_i}^{\velocityfield}$ are 
defined by \cref{eq:h_t_ti_concrete} and \cref{eq:eta_t_ti_concrete}, respectively. 
The detailed implementation for the scheme \cref{eq:gradientflow} is 
given in \cref{algo:GradientDescentAlgorithmForFiniteFunctional_0}.

As indicated in \cref{re:remark1}, here we use the \ac{RKHS} with a symmetric and positive-definite 
Gaussian kernel $\kernel \colon \domain \times \domain \to \Matrix_{+}^{n \times n}$, for instance 
defined by \cref{eq:KernelEq} for $n=2$, and then the operator $\Koperator \colon \LpSpace^2(\domain, \Real^n) \rightarrow \Vspace$ is uniquely defined by 
\[
\langle \vfield, \vfieldother\rangle_{ \LpSpace^2} = \langle \Koperator(\vfield), \vfieldother\rangle_{\Vspace},
\]
where $\Koperator(\vfield) = \int_{\Omega} \kernel(\Cdot, y)\vfield(y) \dint y$. 

As a result, the sequence $\{\velocityfield^k\}$ generated by \cref{eq:gradientflow} locate  
in $\Xspace{2}$ if the initial value of velocity field is selected in the same space, say the given zero velocity field, which ultimately 
leads to the cluster points located in $\Xspace{2}$, 
and further produces the flow of diffeomorphisms through \ac{ODE} \cref{eq:FlowEq}. 
In contrast, if one uses the $\LpSpace^2$-gradient descent scheme in \cref{eq:gradientflow}, 
the resulting sequence and its cluster points would locate in $\LpSpace^2([0,1], \domain)$ instead. 
This cannot generate a flow of diffeomorphisms for lacking sufficient smoothness. 
Since the proposed model is nonconvex with regard to the velocity field, the different solving scheme would lead to the  
different local minimum (see the numerical comparison in test suite 1 of \cref{sec:test_suite_1}).

\subsection{Numerical discretization}
\label{sec:Discretization}

The period $[0, 1]$ is discretized uniformly 
into $MN$ partitions. Then a discretized time grid is formulated   
as $\{\tau_j = j/(MN)\}$ for $j = 0, 1, \ldots, MN$. Hence, $\tau_{iM} = t_i$ 
for $i = 0, 1, \ldots, N$. In other words, each partition $[t_i, t_{i+1}]$ is subdivided  
into $M$ even segments. Evidently, we have $\tau_i = t_i$ when $M=1$, which means the discretized time grid 
is consistent with the gating grid. The $M$ is called the factor of discretized 
time degree, which determines the fineness of the grid along the temporal axis. 

Solving the \ac{ODE} \cref{eq:FlowEq} numerically, the deformations $\gelement{\tau_i, \tau_{i-1}}{\velocityfield}$ 
and $\gelement{\tau_i, \tau_{i+1}}{\velocityfield}$ can be computed by
\begin{equation}\label{eq:deformation_approx2}
\gelement{\tau_j, \tau_{j-1}}{\velocityfield} \approx  \Id - \frac{1}{MN}\velocityfield(\tau_j,\Cdot),
\end{equation}
and 
\begin{equation}\label{eq:deformation_approx1}
\gelement{\tau_j, \tau_{j+1}}{\velocityfield} \approx  \Id + \frac{1}{MN}\velocityfield(\tau_j,\Cdot).
\end{equation}

By \cref{eq:FlowRelation} and \cref{eq:deformation_approx2}, we have  
\begin{equation}\label{eq:deformation_approx3}
\gelement{\tau_j,0}{\velocityfield} \approx  \gelement{\tau_{j-1},0}{\velocityfield} \circ \Bigl(\Id - \frac{1}{MN}\velocityfield(\tau_j,\Cdot)\Bigr)
\end{equation} 
for $j = 1, 2, \ldots, MN$. Similarly, combining \cref{eq:FlowRelation} and \cref{eq:deformation_approx1}, we obtain the following formula  
\begin{equation}\label{eq:deformation_approx4}
\gelement{\tau_j, t_i}{\velocityfield} \approx  \gelement{\tau_{j+1}, t_i}{\velocityfield} \circ \Bigl(\Id + \frac{1}{MN}\velocityfield(\tau_j,\Cdot)\Bigr) 
\end{equation}
for $j = iM-1, iM-2, \ldots, 0$, where $\gelement{t_i,t_i}{\velocityfield} = \Id$. 

Then using \cref{eq:deformation_approx3}, the Jacobian determinant can be calculated by
\begin{equation}\label{eq:deformation_approx5}
\bigl\vert \Diff(\gelement{\tau_j, 0}{\velocityfield})\bigr\vert \approx \Bigl(1 - \frac{1}{MN} \Div\velocityfield(\tau_j,\Cdot)\Bigr) 
 \bigl\vert  \Diff(\gelement{\tau_{j-1},0}{\velocityfield})\bigr\vert  \circ \Bigl(\Id - \frac{1}{MN}\velocityfield(\tau_j,\Cdot) \Bigr) 
 \end{equation}
for $j = 1, 2, \ldots, MN$. Here $\gelement{0,0}{\velocityfield} = \Id$ 
and $\bigl\vert \Diff(\gelement{0,0}{\velocityfield})\bigr\vert = 1$.

As given in \cref{eq:soc4}, updating the template requires to compute the mass-preserving 
deformations like $\diffeo_{0,{t_i}}^{\velocityfield} . \template = \bigl\vert\Diff\bigl(\diffeo_{t_i,0}^{\velocityfield}\bigr)\bigr\vert \template \circ\gelement{t_i,0}{\velocityfield}$. 

By \cref{eq:deformation_approx3}, we have the following estimate 
\begin{equation}\label{eq:deformation_approx7}
\template \circ \gelement{\tau_j,0}{\velocityfield} \approx  \bigl(\template \circ \gelement{\tau_{j-1},0}{\velocityfield}\bigr) \circ \Bigl(\Id - \frac{1}{MN}\velocityfield(\tau_j,\Cdot)\Bigr)
\end{equation}
for $j = 1, 2, \ldots, MN$, and $\template \circ \gelement{0,0}{\velocityfield} = \template$. 
Multiplying \cref{eq:deformation_approx7} with \cref{eq:deformation_approx5}, we have the significant update 
\begin{equation}\label{eq:deformation_approx_template}
\gelement{0, \tau_j}{\velocityfield} . \template  \approx  \Bigl(1 - \frac{1}{MN} \Div\velocityfield(\tau_j,\Cdot)\Bigr) 
 \bigl(\gelement{0, \tau_{j-1}}{\velocityfield} . \template\bigr)  \circ \Bigl(\Id - \frac{1}{MN}\velocityfield(\tau_j,\Cdot) \Bigr). 
\end{equation}

Then for \cref{eq:h_t_ti_concrete}, the \cref{eq:deformation_approx4} also yields the following approximation 
\begin{equation}\label{eq:deformation_approx6}
h_{\tau_j, t_i}^{\template, \velocityfield} \approx  h_{\tau_{j+1}, t_i}^{\template, \velocityfield}   \circ \Bigl(\Id + \frac{1}{MN}\velocityfield(\tau_j,\Cdot) \Bigr) 
\end{equation}
for $j = iM-1, iM-2, \ldots, 0$, where, by \cref{eq:h_t_ti_concrete}, 
\[h_{t_i, t_i}^{\template, \velocityfield} = 2\ForwardOpAdjoint_{t_i}\bigl(\ForwardOp_{t_i}(\diffeo_{0, t_i}^{\velocityfield}.\template) - \data(t_i,\Cdot) \bigr).\]

As observed from \cref{eq:soc4} and \cref{eq:gradientflow}, we need to discrete 
$\eta_{\tau_j, t_i}^{\velocityfield}$ for $j = iM-1, iM-2, \ldots, 0$ and $i = 1, \ldots, N$. By \cref{eq:eta_t_ti_concrete} we know the fact  
$\eta_{t_i, t_i}^{\velocityfield} = 0$ and  
\begin{equation}\label{eq:eta_tauj_ti_concrete}
\eta_{\tau_j, t_i}^{\velocityfield} = \int_{\tau_j}^{t_i} \vert\velocityfield(\iota, \cdot)\vert^2\bigl( \gelement{\tau_j,\iota}{\velocityfield}\bigr)\dint \iota. 
\end{equation}
We discretize the right-hand side of \cref{eq:eta_tauj_ti_concrete} by 
\begin{equation}\label{eq:eta_tauj_ti_concrete_discretize}
\eta_{\tau_j, t_i}^{\velocityfield} \approx \frac{1}{iM - j}\sum_{l=j+1}^{iM}\vert\velocityfield(\tau_l, \cdot)\vert^2\bigl( \gelement{\tau_j,\tau_l}{\velocityfield}\bigr). 
\end{equation}
\begin{remark}
The following scheme 
\begin{equation}\label{eq:eta_tauj_ti_concrete_discretize_alternative}
\eta_{\tau_j, t_i}^{\velocityfield} \approx \frac{1}{iM - j + 1}\sum_{l=j}^{iM}\vert\velocityfield(\tau_l, \cdot)\vert^2\bigl( \gelement{\tau_j,\tau_l}{\velocityfield}\bigr) 
\end{equation}
is an alternative for discretizing \cref{eq:eta_tauj_ti_concrete}. But it has no remarkable improvement to the ultimate result. 
\end{remark}
Similarly, using \cref{eq:deformation_approx4}, we implement the deformation $\gelement{\tau_j,\tau_l}{\velocityfield}$ 
in \cref{eq:eta_tauj_ti_concrete_discretize} by 
  \begin{equation}\label{eq:deformation_approx4_generalize}
\vert\velocityfield(\tau_l, \cdot)\vert^2 \circ \gelement{\tau_s, \tau_l}{\velocityfield} \approx  \vert\velocityfield(\tau_l, \cdot)\vert^2 \circ \gelement{\tau_{s+1}, \tau_l}{\velocityfield} \circ \Bigl(\Id + \frac{1}{MN}\velocityfield(\tau_s,\Cdot)\Bigr) 
\end{equation}
for $s = l-1, l-2, \ldots, j$.

\subsection{Algorithms} 
\label{sec:Algorithm}

As analyzed in \cref{sec:pratical_example}, we need to solve \cref{eq:VarReg_LDDMM_time_discretized} by the 
alternating iterative scheme in \cref{eq:Alternating}. At each iteration two subproblems will be solved, namely, 
updating $\template$ with given $\velocityfield$ (\cref{algo:GDSB_4DCT}) 
and updating $\velocityfield$ with given 
$\template$ (\cref{algo:GradientDescentAlgorithmForFiniteFunctional_0}).

\subsubsection{Algorithm for template reconstruction}
\label{sec:Gradient_descent_scheme1}

Using the discretization in \cref{sec:Discretization}, we give detailed implementation of the projected gradient 
descent scheme in \cref{algo:GDSB_4DCT} for minimizing the smoothed version 
of \cref{eq:VarReg_LDDMM_template_time_discrete2} as described in \cref{sec:Template_reconstruction}.

\begin{algorithm}[htbp]
\caption{Projected gradient descent \cref{eq:soc4} for template reconstruction}
\label{algo:GDSB_4DCT}
\begin{algorithmic}[1]
\State \emph{Initialize}: Let $t_i \gets \frac{i}{N}$ for $i = 0, \ldots, N$, $\tau_j \gets \frac{j}{MN}$ 
for $j = 0, \ldots, MN$. Given initial template $\template^0$, velocity field $\velocityfield$, regularization parameters $\mu_1 > 0$ and $\mu_2 > 0$, 
error tolerance $\epsilon_{\template} > 0$, stepsize $\alpha > 0$, and iteration number $K_{\template} > 0$. Let $k \gets 0$. 
\State \emph{Loop}:
\State \quad Compute $\gelement{0, \tau_j}{\velocityfield} . \template^k$ by \cref{eq:deformation_approx_template}: 
\[
\gelement{0, \tau_j}{\velocityfield} . \template^k  \gets  \Bigl(1 - \frac{1}{MN} \Div\velocityfield(\tau_j,\Cdot)\Bigr) 
 \bigl(\gelement{0, \tau_{j-1}}{\velocityfield} . \template^k\bigr)  \circ \Bigl(\Id - \frac{1}{MN}\velocityfield(\tau_j,\Cdot) \Bigr) 
\]
\quad  for $1 \leq j \leq MN$, where $\gelement{0, 0}{\velocityfield} . \template^k = \template^k$. 
\State \quad Estimate $h_{0, t_i}^{\template^k, \velocityfield}$ for $1 \leq i \leq N$ by \cref{eq:deformation_approx6}: 
\[
h_{\tau_j, t_i}^{\template^k, \velocityfield} \gets  h_{\tau_{j+1}, t_i}^{\template^k, \velocityfield}   \circ \Bigl(\Id + \frac{1}{MN}\velocityfield(\tau_j,\Cdot) \Bigr) 
 \]
\quad for $j = iM-1, iM-2, \ldots, 0$, and update $h_{t_i, t_i}^{\template^k, \velocityfield}$ by
\[
h_{t_i, t_i}^{\template^k, \velocityfield} \gets 2\ForwardOpAdjoint_{t_i}\bigl(\ForwardOp_{t_i}(\diffeo_{0, t_i}^{\velocityfield}.\template^k) - \data(t_i,\Cdot) \bigr). 
\] 
\State \quad Compute $\eta_{0, t_i}^{\velocityfield}$ for $1 \leq i \leq N$ by \cref{eq:eta_tauj_ti_concrete_discretize}: 
\[
\eta_{0, t_i}^{\velocityfield} \gets \frac{1}{iM}\sum_{l=1}^{iM}\vert\velocityfield(\tau_l, \cdot)\vert^2\bigl( \gelement{0,\tau_l}{\velocityfield}\bigr), 
\]
\quad where for $1 \leq l \leq iM$, by \cref{eq:deformation_approx4_generalize}, 
\[
\vert\velocityfield(\tau_l, \cdot)\vert^2 \circ \gelement{\tau_s, \tau_l}{\velocityfield} \gets  \vert\velocityfield(\tau_l, \cdot)\vert^2 \circ \gelement{\tau_{s+1}, \tau_l}{\velocityfield} \circ \Bigl(\Id + \frac{1}{MN}\velocityfield(\tau_s,\Cdot)\Bigr) 
 \]
\quad for $s = l-1, l-2, \ldots, 0$.
\State \quad Update $\grad\GoalFunctionalV_{\velocityfield}(\template^k)$ by 
 \[
 \grad\GoalFunctionalV_{\velocityfield}(\template^k)
    \gets  \frac{1}{N}\sum_{i=1}^{N}\Bigl(h_{0, t_i}^{\template^k, \velocityfield}  + \mu_2 \eta_{0, t_i}^{\velocityfield} \Bigr) + \mu_1\grad^{\,\ast}\biggl(\frac{\grad \template^k}{\vert \nabla \template^k \vert_{2, \epsilon}}\biggr). 
  \]
\State \quad Evaluate $\template^{k+1}$ by
\[
\template^{k+1} \gets \textsf{Proj}_{\geq 0}\Bigl\{\template^k - \alpha\grad\GoalFunctionalV_{\velocityfield}(\template^k)\Bigr\}. 
\]
\State \quad \textbf{If} $\bigr\vert \template^{k+1} - \template^k \bigr\vert > \epsilon_{\template}$ and $k<K_{\template}$, then $k \gets k+1$, \textbf{goto} \emph{Loop}.
\State \textbf{Output} $\template^{k+1}$.
\end{algorithmic}
\end{algorithm}

\subsubsection{Algorithm for velocity field estimation}
\label{sec:Gradient_descent_scheme2}

Here we list the numerical implementation of gradient descent scheme for 
velocity field estimation in \cref{sec:GradientDescent_velocithfield}.
The following \cref{algo:GradientDescentAlgorithmForFiniteFunctional_0} outlines the procedure for computing the 
scheme \cref{eq:gradientflow} that makes use of the discretization in \cref{sec:Discretization}.

\begin{algorithm}[htbp]
\caption{Gradient descent \cref{eq:gradientflow} for velocity field estimation}
\label{algo:GradientDescentAlgorithmForFiniteFunctional_0}
\begin{algorithmic}[1]
\State \emph{Initialize}: Let $t_i \gets \frac{i}{N}$ for $i = 0, \ldots, N$, $\tau_j \gets \frac{j}{MN}$ 
for $j = 0, \ldots, MN$. Given initial velocity field $\velocityfield^0(\tau_j)$, template $\template$, 
regularization parameter $\mu_2 > 0$, error tolerance $\epsilon_{\velocityfield} > 0$, 
stepsize $\beta > 0$, and iteration number $K_{\velocityfield} > 0$. 
Fixed kernel function $\kernel(\Cdot,\Cdot)$. Let $k \gets 0$.
\State \emph{Loop}:
\State  \quad Compute $\gelement{0, \tau_j}{\velocityfield^k} . \template$ by \cref{eq:deformation_approx_template}: 
\[
\gelement{0, \tau_j}{\velocityfield^k} . \template  \gets  \Bigl(1 - \frac{1}{MN} \Div\velocityfield^k(\tau_j,\Cdot)\Bigr) 
 \bigl(\gelement{0, \tau_{j-1}}{\velocityfield^k} . \template\bigr)  \circ \Bigl(\Id - \frac{1}{MN}\velocityfield^k(\tau_j,\Cdot) \Bigr) 
\]
\quad  for $1 \leq j \leq MN$, where $\gelement{0, 0}{\velocityfield^k} . \template = \template$. 
\State \quad Update $h_{\tau_j, t_i}^{\template, \velocityfield^k}$ for $1 \leq i \leq N$ by \cref{eq:deformation_approx6}: 
\[
h_{\tau_j, t_i}^{\template, \velocityfield^k}    
  \gets  h_{\tau_{j+1}, t_i}^{\template, \velocityfield^k} \circ \Bigl(\Id + \frac{1}{MN}\velocityfield^k(\tau_j,\Cdot) \Bigr) 
 \]
\quad for $j = iM-1, iM-2, \ldots, 0$ and compute $h_{t_i, t_i}^{\template, \velocityfield^k}$ for $1 \leq i \leq N$ by
\[
h_{t_i, t_i}^{\template, \velocityfield^k} \gets 2\ForwardOpAdjoint_{t_i}\bigl(\ForwardOp_{t_i}(\gelement{0, t_i}{\velocityfield^k} . \template) - \data(t_i,\Cdot)\bigr).
\]
\State \quad Compute $\eta_{\tau_j, t_i}^{\velocityfield^k}$ for $1 \leq i \leq N$ by \cref{eq:eta_tauj_ti_concrete_discretize}: 
\[
\eta_{\tau_j, t_i}^{\velocityfield^k} \gets \frac{1}{iM - j}\sum_{l=j + 1}^{iM}\vert\velocityfield^k(\tau_l, \cdot)\vert^2\bigl( \gelement{\tau_j,\tau_l}{\velocityfield^k}\bigr), 
\]
\quad where for $j < l \leq iM$, by \cref{eq:deformation_approx4_generalize}, 
\[
\vert\velocityfield^k(\tau_l, \cdot)\vert^2 \circ \gelement{\tau_s, \tau_l}{\velocityfield^k} \gets  \vert\velocityfield^k(\tau_l, \cdot)\vert^2 \circ \gelement{\tau_{s+1}, \tau_l}{\velocityfield^k} \circ \Bigl(\Id + \frac{1}{MN}\velocityfield^k(\tau_s,\Cdot)\Bigr) 
 \]
\quad for $s = l-1, l-2, \ldots, j$.
\State \quad Evaluate $\grad^{\,\Vspace}\!\!\GoalFunctionalV_{\template}(\velocityfield^k)(\tau_j,\Cdot)$ (using \ac{FFT} to compute the convolution) by 
 \begin{equation*}
 \grad^{\,\Vspace}\!\!\GoalFunctionalV_{\template}(\velocityfield^k)(\tau_j,\Cdot) 
    \gets  \frac{1}{N}\sum_{\{i\geq 1 : t_i \geq \tau_j\}} \Koperator\biggl( \gelement{0, \tau_j}{\velocityfield^k} . \template \Bigl[ \grad \bigl(h_{\tau_j, t_i}^{\template,\velocityfield^k}  + \mu_2 \eta_{\tau_j, t_i}^{\velocityfield^k}\bigr) 
    + 2\mu_2 \velocityfield^k_{\tau_j, t_i} \Bigr] \biggr) 
  \end{equation*}
\quad  for $0 \leq j \leq MN$. 
\State \quad Update $\velocityfield^k(\tau_j, \Cdot)$ for $0 \leq j \leq MN$ by:
\[
 \velocityfield^{k+1}(\tau_j,\Cdot)  \gets \velocityfield^k(\tau_j,\Cdot) -  \beta \grad^{\,\Vspace}\!\!\GoalFunctionalV_{\template}(\velocityfield^k)(\tau_j,\Cdot).
\]
\State \quad \textbf{If} $\bigr\vert \velocityfield^{k+1} - \velocityfield^k \bigr\vert > \epsilon_{\velocityfield}$ and $k<K_{\velocityfield}$, then $k \gets k+1$, \textbf{goto} \emph{Loop}.
\State \textbf{Output} $\velocityfield^{k+1}$.
\end{algorithmic}
\end{algorithm}

\subsubsection{Alternating minimization algorithm}
\label{sec:Alternating_minimization_method}

Ultimately, the alternating minimization algorithm for recovering the template and velocity field is presented in the 
following \cref{algo:Alternating_reconstruction}. The iteration number for solving each inner subproblem is restricted to be one. 
The complexity of the algorithm is comparable to the counterpart in \cite{ChGrOz19}.  

\begin{algorithm}[htbp]
\caption{Alternating minimization algorithm}
\label{algo:Alternating_reconstruction}
\begin{algorithmic}[1]
\State \emph{Initialize}: Given $M, N$. Let $t_i \gets \frac{i}{N}$ for $i = 0, \ldots, N$, $\tau_j \gets \frac{j}{MN}$ 
for $j = 0, \ldots, MN$. Given initial velocity field $\velocityfield^0$ and template $\template^0$, 
regularization parameters $\mu_1 > 0$ and $\mu_2 > 0$, error tolerances $\epsilon_{\template} > 0$ and $\epsilon_{\velocityfield} > 0$, 
fixed stepsizes $\alpha^k = \alpha > 0$ and $\beta^k = \beta > 0$, and maximum iteration number $K > 0$. 
Fixed kernel function $\kernel(\Cdot,\Cdot)$. Let $k \gets 0$. 
\State \emph{Loop}:
\State  \quad Set $\velocityfield = \velocityfield^k$. Run steps 3-7 of \cref{algo:GDSB_4DCT}. Update $\template^{k+1}$.
\State  \quad Set $\template = \template^{k+1}$. Run steps 3-7 of \cref{algo:GradientDescentAlgorithmForFiniteFunctional_0}. Update $\velocityfield^{k+1}$.
\\
\quad \textbf{Or} 
\State  \quad Set $\template = \template^k$. Run Lines 3-7 of \cref{algo:GradientDescentAlgorithmForFiniteFunctional_0}. Update $\velocityfield^{k+1}$.
\State  \quad Set $\velocityfield = \velocityfield^{k+1}$. Run Lines 3-7 of \cref{algo:GDSB_4DCT}. Update $\template^{k+1}$. 
\\
\State \quad \textbf{If} $\bigr\vert \velocityfield^{k+1} - \velocityfield^k \bigr\vert > \epsilon_{\velocityfield}$ or $\bigr\vert \template^{k+1} - \template^k \bigr\vert > \epsilon_{\template}$, and $k<K$,  \\
\quad then $k \gets k+1$, \textbf{goto} \emph{Loop}.
\State \textbf{Output} $\template^{k+1}$, $\velocityfield^{k+1}$.
\end{algorithmic}
\end{algorithm}

\section{Numerical experiments}
\label{sec:numerical_experiments}

To evaluate the proposed method, we adopt the very sparse and/or highly noisy data sets simulated in 2D spatial and 
temporal tomography, which are measured from the mass-preserving sequential images by parallel beam scanning.  
The implemented algorithms were programmed in Python. The routines were operated on ThinkStation Xeon E5-2620 v4 2.10 GHz CPU, 64GB ROM, 
TITAN Xp GPU. The GPU was merely used to accelerate the forward 
and backward projections. The test section illustrates the performance of the proposed method even 
though this is not a full evaluation. The implementation 
is partially supported by Operator Discretization Library (\href{http://github.com/odlgroup/odl}{http://github.com/odlgroup/odl}).

The forward operator $\ForwardOp_t \colon \RecSpace \to \DataSpace$ is specified by 
Radon transform in $\Real^2$, namely,
\begin{equation*}
R(f)(\theta, x) = \int_{\Real} f(x + s\theta)\dint s  \quad \text{for $\theta \in S^1$ and $x \in \theta^{\bot}$},
\end{equation*}
where $R$ represents the Radon transform, $S^1$ is the unit circle, and $(\theta, x)$ 
determines a line through $x$ in $\Real^2$ with direction $\theta$.  
Additionally, the $\Vspace$ denotes the space of vector fields that is specified by an \ac{RKHS} with the following Gaussian kernel function  
\begin{equation}\label{eq:KernelEq}
  \kernel(x,y) := 
      \exp\Bigl(-\dfrac{1}{2 \sigma^2} \Vert x-y \Vert_2^2 \Bigr)
      \begin{pmatrix} 
          1  & 0 \\
          0  & 1
      \end{pmatrix} 
\quad\text{for $x,y \in \Real^2$,}
\end{equation}
where the $\sigma >0$ determines the kernel width. 

The mass-preserving images of all gates are defined on $\domain$. For the image at each gate, 
the noise-free data for per view is obtained by the 2D parallel beam projection, 
which is then added the Gaussian white noise at a certain level resulting in the noisy data. The noise level is quantified 
by \ac{SNR} in logarithmic decibel (dB).

\subsection{Test suites and results}\label{sec:results}

The test suites are dedicated to assessing the performance against the overview evaluation, different noise levels, and the 
sensitivity against various selections of regularization parameters $\mu_1$, $\mu_2$, and kernel width $\sigma$. 
We also compare the proposed method numerically to the methods by using \ac{TV}-based 
reconstruction, and $\LpSpace^2$-gradient descent scheme.

\subsubsection{Test suite 1: Overview evaluation}
\label{sec:test_suite_1}

Here we prepare a test for evaluating the overview performance with regard to numerical convergence, reconstructed accuracy, 
and mass-preserving property. This test uses a multi-object phantom with five gates (i.e., $N = 5$). 
The masses of the sequential images are the same. The ground truth at each gate is shown 
in the last row of \cref{Test_suite_1:multi_object_phantom}, 
which is adapted from \cite{ChOz18}. The image at each gate is consisting of six separately star-like 
objects with grey-values ranging on $[0, 1]$, which is digitized by using $438\!\times\!438$ pixels. 
The images of all gates are supported on a fixed rectangular domain $[-16, 16]\!\times\![-16, 16]$. 

To show the performance of the proposed method, we use the noise-free measurements. 
For the image at each gate, the noise-free data per view is measured by 2D parallel beam scanning geometry 
with evenly 620 bins, which is supported on the range of $[-24, 24]$.  
For the gate $i\,(1 \le i \le N)$, the scanning views are distributed on $[(i-1)\pi /36, \pi + (i-1) \pi/36]$ uniformly, 
and the total view number is only six. 

It is well-known that when the gradient of the image is sparse, tomographic reconstruction by \ac{TV}-based 
reconstruction method outperforms other methods, such as \ac{FBP}, the iterative methods without 
considering priori knowledge. This is especially notable when the data is undersampled. 
In this test, the used phantom has sparse gradient, and the sampling is quite sparse (six views per gate). 
However, assume that we neglect the dynamic motions among the gates (i.e., disregard any temporal evolution), 
and just treat the spatiotemporal problem as a static one. Then the whole tomographic data set is equivalently 
sampled from 30 projection views. We conduct image reconstruction with the \ac{TV}-based method. 
The reconstructed image carries severe motion artifacts as illustrated in \cref{Test_suite_1:TV_static_recon}, 
the distributed intensity of which is also disordered for the mass-preservation among these sequential images. 
In contrast, the proposed method exclusively focuses on dealing with such spatiotemporal imaging problem, 
and is applied to reconstruct the dynamically sequential images with mass-preservation. 
\begin{figure}[htbp]
\centering
    \includegraphics[width=0.8\textwidth]{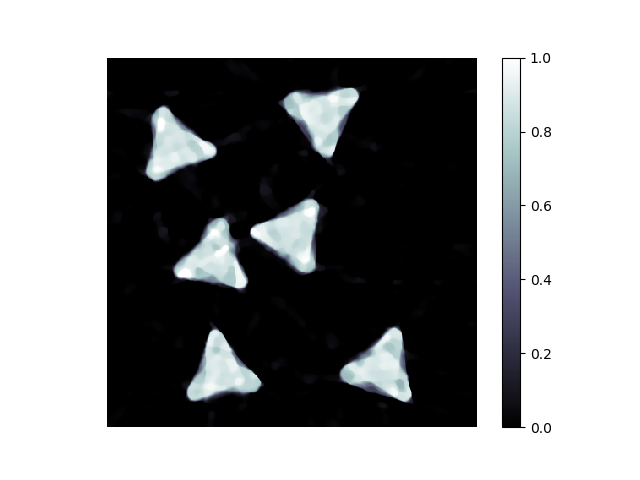}
    \vspace{-5mm}
   \caption{Test suite 1. Reconstructed image by the \ac{TV}-based method if the spatiotemporal problem is treated as a static one.}
\label{Test_suite_1:TV_static_recon}
\end{figure}

In the proposed model, the regularization parameters ($\mu_1, \mu_2$) are selected as $(0.01, 10^{-7})$ for 
the noise-free measurements. The factor  $M$ of discretized time degree is set to be $2$. 
The kernel width $\sigma$ is selected to be $2$. 
The gradient descent stepsizes are fixed as $\alpha = 0.01$ and $\beta = 0.05$, respectively. 
Firstly we apply \cref{algo:GDSB_4DCT} to obtain an initial template image after 50 
iterations by using all of the gated data with given zero velocity field. 
This is actually using static \ac{TV}-based method to perform 50 iterations. 
Then we use \cref{algo:Alternating_reconstruction} to solve the proposed model 
by the obtained initial template and zero initialized velocity field. 
Note that the above iteration number is flexible, which just needs enough to gain an appropriately 
initial template for \cref{algo:Alternating_reconstruction}. 

To validate the numerical convergence of the proposed algorithm, we set the maximum iteration number 
to be sufficiently large, for instance, $2000$. The descent curve of the objective functional is plotted 
in \cref{Test_suite_1:objective_func_descent}, which shows the stable convergence. Additionally, 
the reconstructed results are shown in the third row of \cref{Test_suite_1:multi_object_phantom}. 
It is clear that the reconstructed image at each gate are almost the same as the corresponding ground 
truth from visual observation.  
\begin{figure}[htbp]
\centering
    \includegraphics[width=\textwidth]{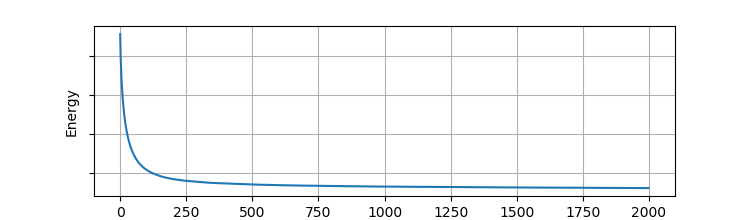}
    \vskip-0.25\baselineskip
   \caption{Test suite 1. Descent curve of the objective functional of the proposed model as the iteration grew. }
\label{Test_suite_1:objective_func_descent}
\end{figure}

We also compare the proposed method against some other approaches. In this test, the used phantom (ground truth image) 
has sparse gradient, and the sampling is sparse (six views per gate), so it is fairly comparing against \ac{TV}-based 
reconstruction method. We use the \ac{TV}-based method to perform reconstruction for the same projection data at each gate. 
The regularization parameter and the stepsize are chosen as $\alpha = 0.01$ and $\lambda = 0.01$ respectively, 
which are the same as the proposed method. After sufficiently the same 2000 iterations for each gate, the reconstructed results 
are shown in the first row of \cref{Test_suite_1:multi_object_phantom}. It is observed that these reconstructed images 
have severely stair-like blocks even though the shape structures of the objects are similar to those counterparts in the ground truth. 

Moreover, to obtain the diffeomorphic deformations, the velocity field at each time point is restricted into the \ac{RKHS} $\Vspace$ in 
the proposed model. Hence we use the $\Xspace{2}$-gradient descent scheme to solve the optimal velocity field. To validate 
its necessity, we substitute it by the more general $\LpSpace^2$ space, and then apply the $\LpSpace^2$-gradient descent 
scheme to solve the same problem in \cref{sec:GradientDescent_velocithfield}. To make fair comparison, 
we set the two regularization parameters as the same as the proposed method. 
Because the iterated velocity field lacks smoothness, the algorithm is not convergent 
under the same stepsizes as before. So the associated stepsizes are shortened as $\alpha = 0.001$ and $\beta = 0.005$. 
As a result, a convergent result is obtained as shown in the second row in \cref{Test_suite_1:multi_object_phantom} after 
sufficiently the 2000 iterations. As we can see, the reconstructed images carry severe artifacts on the objects other 
than similar shapes as the ground truth. Furthermore, we show their computed optimal velocity fields at the 
end points in \cref{Test_suite_1:display_velocity_field}. Clearly, the computed optimal velocity field 
by the $\LpSpace^2$-gradient descent scheme is nonsmooth, but that by $\Xspace{2}$-gradient descent 
scheme is quit smooth as expected. 
\begin{figure}[htbp]
\centering
\begin{minipage}[t]{0.2\textwidth}%
     \centering
     \includegraphics[trim=75 25 60 40, clip, width=\textwidth]{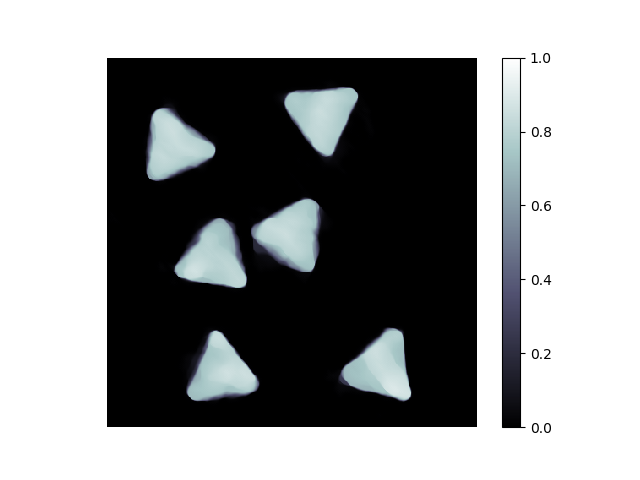}
     \vskip-0.25\baselineskip
   \end{minipage}%
   \hfill
   \begin{minipage}[t]{0.2\textwidth}%
     \centering
    \includegraphics[trim=75 25 60 40, clip, width=\textwidth]{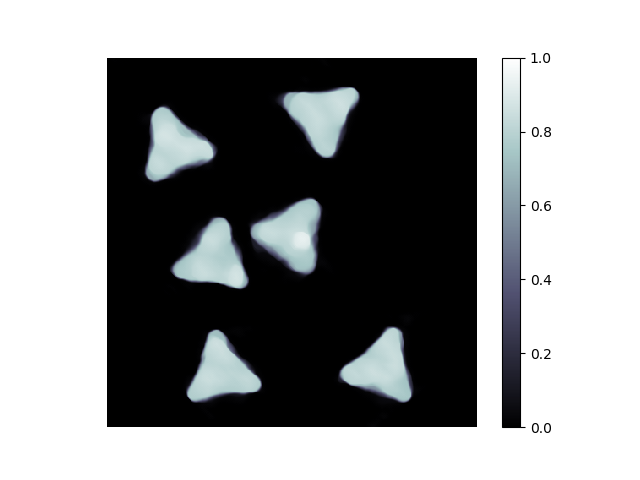}
     \vskip-0.25\baselineskip
   \end{minipage}%
   \hfill
   \begin{minipage}[t]{0.2\textwidth}%
     \centering
     \includegraphics[trim=75 25 60 40, clip, width=\textwidth]{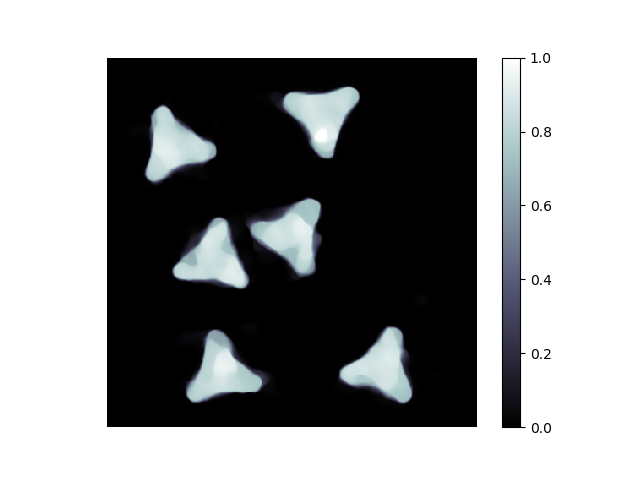}
     \vskip-0.25\baselineskip
   \end{minipage}%
      \hfill
   \begin{minipage}[t]{0.2\textwidth}%
     \centering
     \includegraphics[trim=75 25 60 40, clip, width=\textwidth]{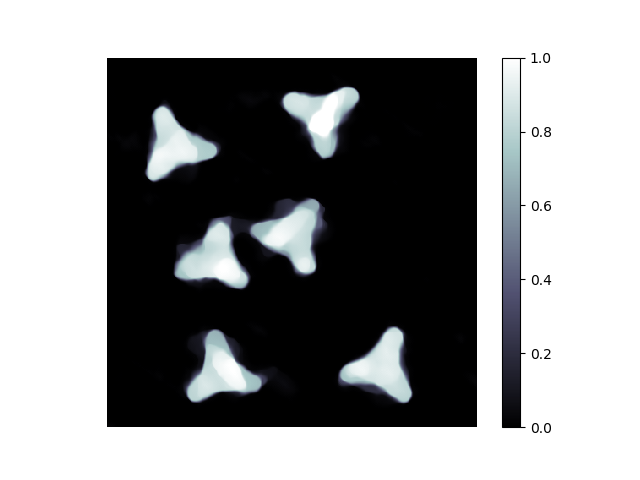}     
     \vskip-0.25\baselineskip
   \end{minipage}%
   \hfill
   \begin{minipage}[t]{0.2\textwidth}%
     \centering
    \includegraphics[trim=75 25 60 40, clip, width=\textwidth]{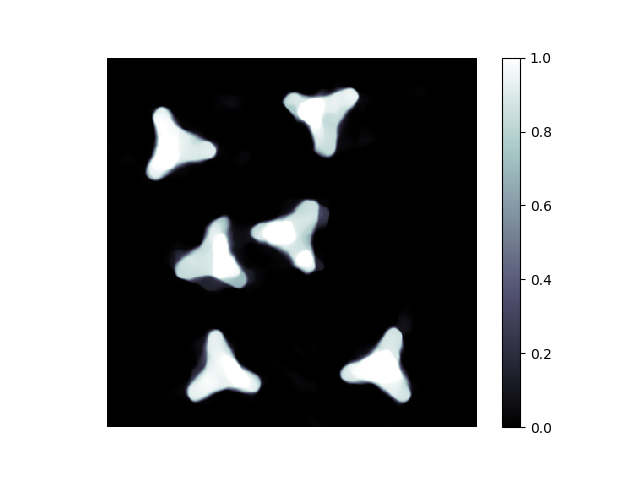}
     \vskip-0.25\baselineskip
   \end{minipage}%
\par\medskip      
\begin{minipage}[t]{0.2\textwidth}%
     \centering
     \includegraphics[trim=75 25 60 40, clip, width=\textwidth]{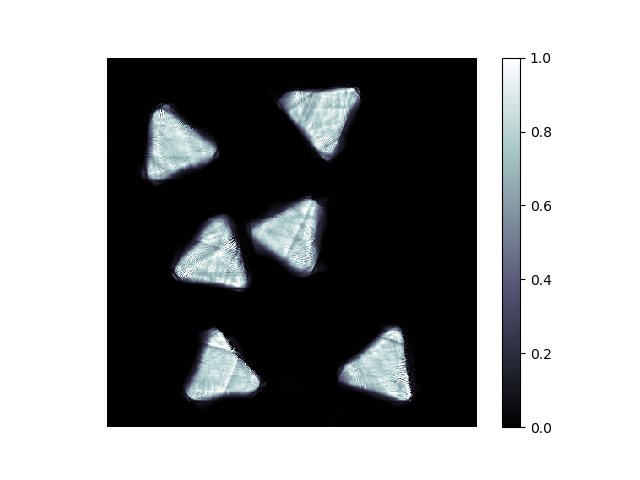}
     \vskip-0.25\baselineskip
   \end{minipage}%
   \hfill
   \begin{minipage}[t]{0.2\textwidth}%
     \centering
    \includegraphics[trim=75 25 60 40, clip, width=\textwidth]{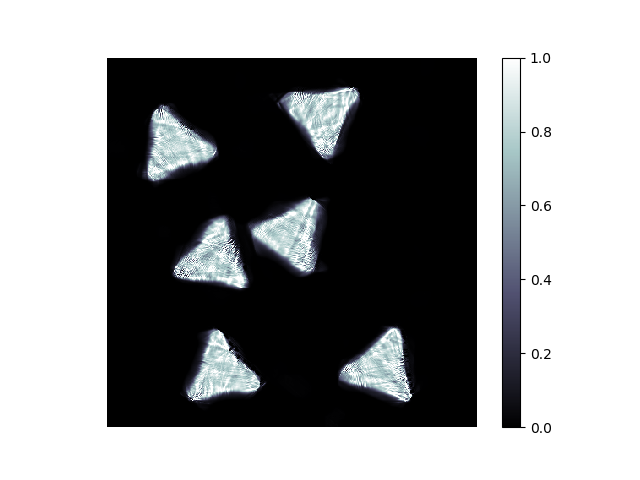}
     \vskip-0.25\baselineskip
   \end{minipage}%
   \hfill
   \begin{minipage}[t]{0.2\textwidth}%
     \centering
     \includegraphics[trim=75 25 60 40, clip, width=\textwidth]{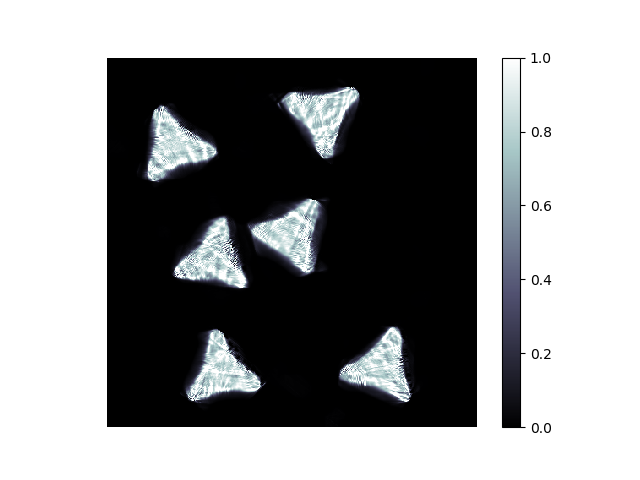}
     \vskip-0.25\baselineskip
   \end{minipage}%
      \hfill
   \begin{minipage}[t]{0.2\textwidth}%
     \centering
     \includegraphics[trim=75 25 60 40, clip, width=\textwidth]{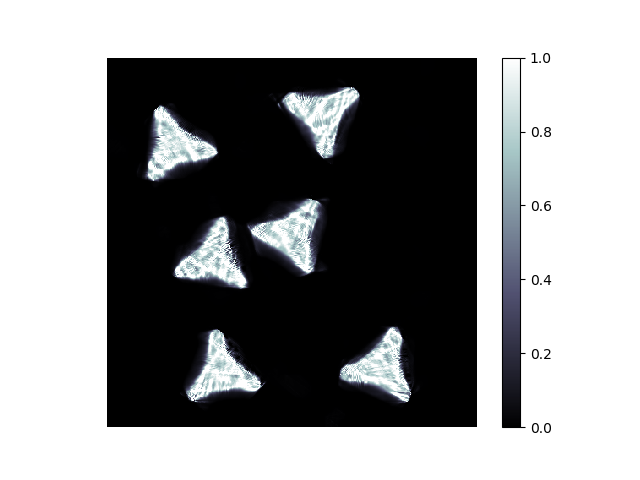}     
     \vskip-0.25\baselineskip
   \end{minipage}%
   \hfill
   \begin{minipage}[t]{0.2\textwidth}%
     \centering
    \includegraphics[trim=75 25 60 40, clip, width=\textwidth]{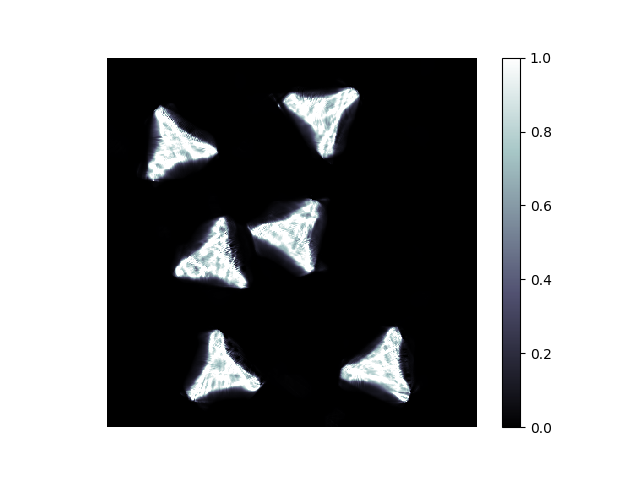}
     \vskip-0.25\baselineskip
   \end{minipage}%
\par\medskip      
\begin{minipage}[t]{0.2\textwidth}%
     \centering
     \includegraphics[trim=75 25 60 40, clip, width=\textwidth]{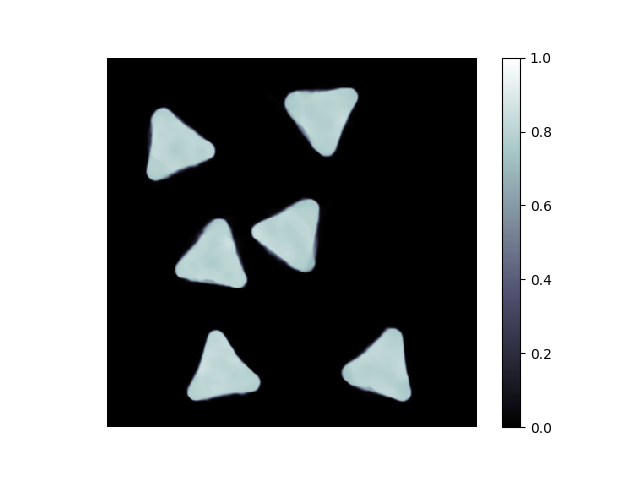}
     \vskip-0.25\baselineskip
   \end{minipage}%
   \hfill
   \begin{minipage}[t]{0.2\textwidth}%
     \centering
    \includegraphics[trim=75 25 60 40, clip, width=\textwidth]{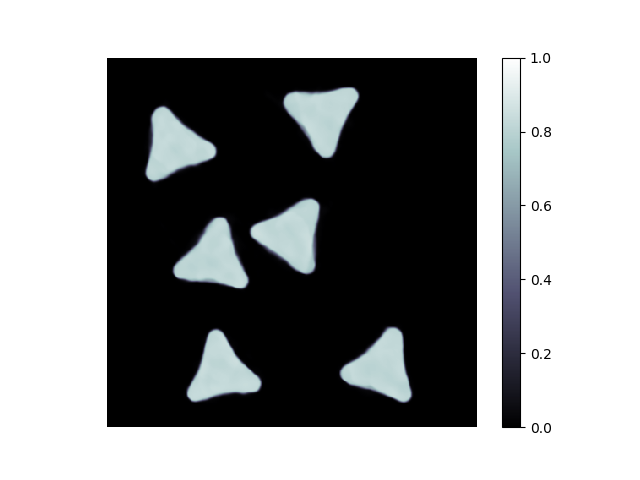}
     \vskip-0.25\baselineskip
   \end{minipage}%
   \hfill
   \begin{minipage}[t]{0.2\textwidth}%
     \centering
     \includegraphics[trim=75 25 60 40, clip, width=\textwidth]{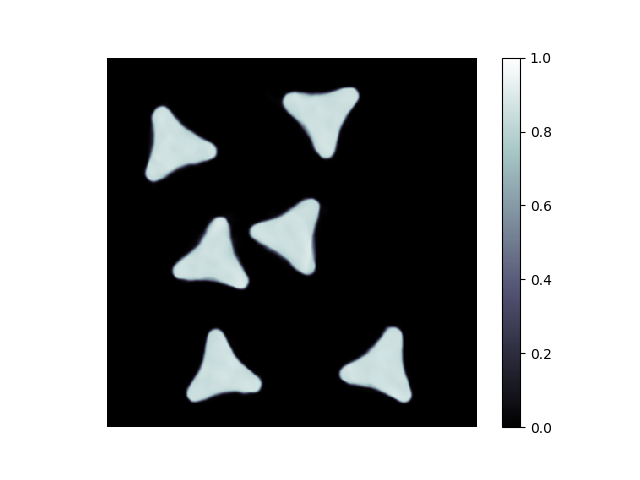}
     \vskip-0.25\baselineskip
   \end{minipage}%
      \hfill
   \begin{minipage}[t]{0.2\textwidth}%
     \centering
     \includegraphics[trim=75 25 60 40, clip, width=\textwidth]{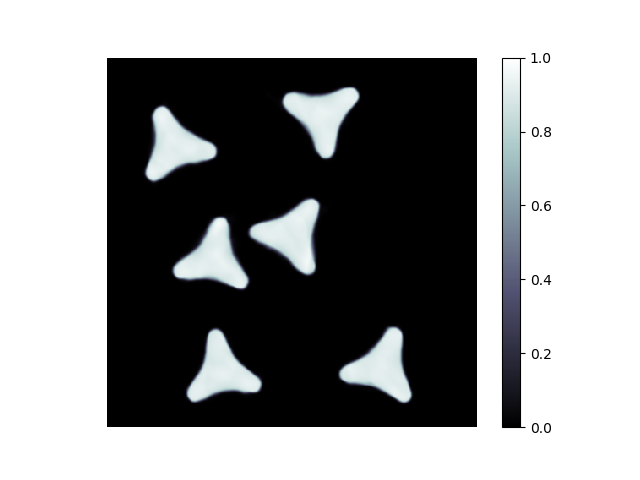}     
     \vskip-0.25\baselineskip
   \end{minipage}%
   \hfill
   \begin{minipage}[t]{0.2\textwidth}%
     \centering
    \includegraphics[trim=75 25 60 40, clip, width=\textwidth]{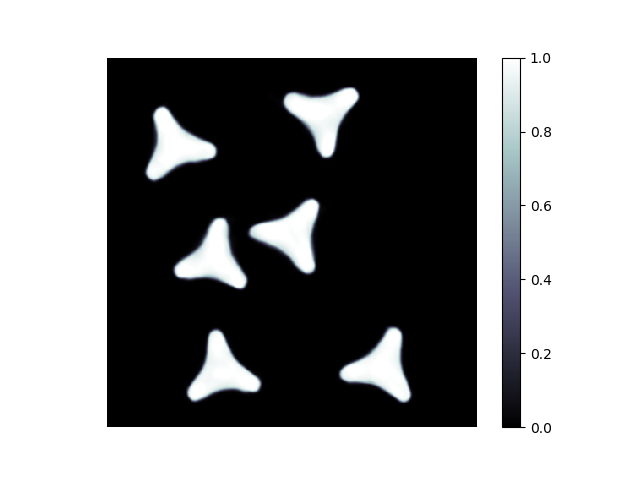}
     \vskip-0.25\baselineskip
   \end{minipage}%
\par\medskip      
   \begin{minipage}[t]{0.2\textwidth}%
     \centering
     \includegraphics[trim=75 25 60 40, clip, width=\textwidth]{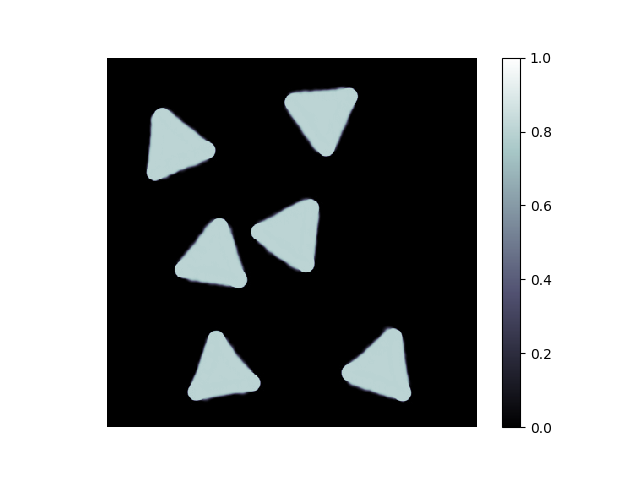}
     \vskip-0.25\baselineskip
     Gate 1
   \end{minipage}%
\hfill
   \begin{minipage}[t]{0.2\textwidth}%
     \centering
     \includegraphics[trim=75 25 60 40, clip, width=\textwidth]{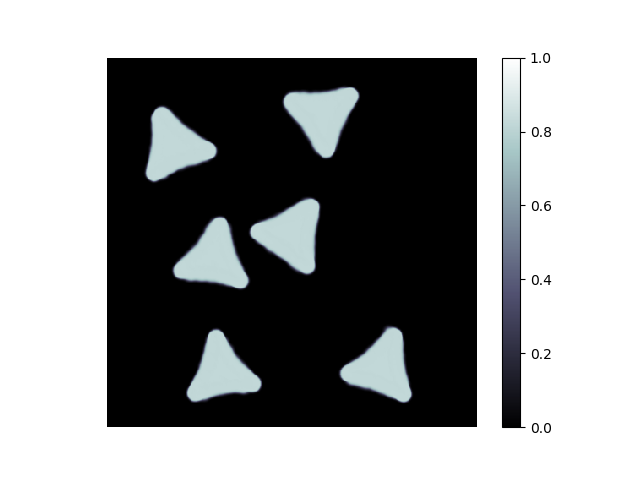}
     \vskip-0.25\baselineskip
     Gate 2
   \end{minipage}%
   \hfill
   \begin{minipage}[t]{0.2\textwidth}%
     \centering
    \includegraphics[trim=75 25 60 40, clip, width=\textwidth]{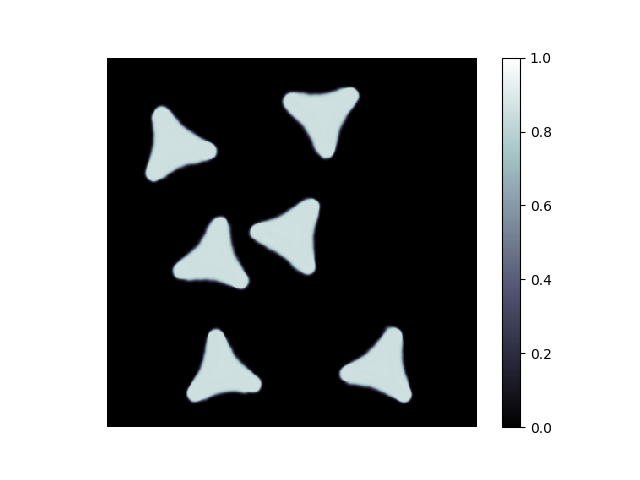}
     \vskip-0.25\baselineskip
          Gate 3
   \end{minipage}%
   \hfill
   \begin{minipage}[t]{0.2\textwidth}%
     \centering
     \includegraphics[trim=75 25 60 40, clip, width=\textwidth]{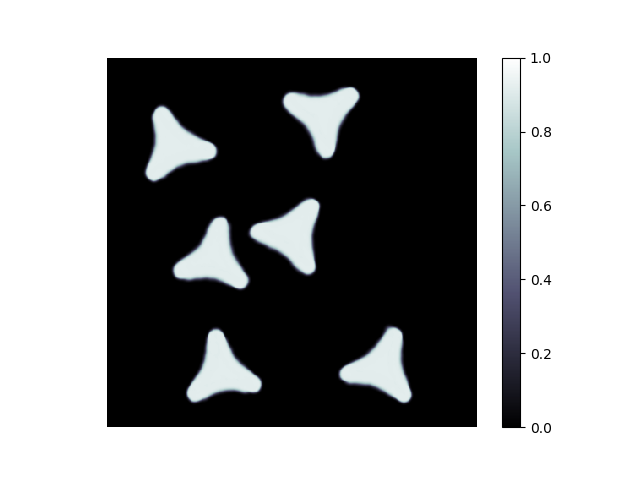}
     \vskip-0.25\baselineskip
     Gate 4
   \end{minipage}%
   \hfill
   \begin{minipage}[t]{0.2\textwidth}%
     \centering
     \includegraphics[trim=75 25 60 40, clip, width=\textwidth]{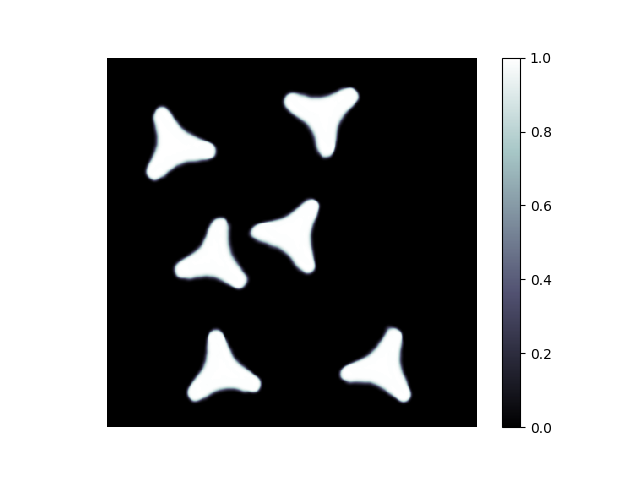}
     \vskip-0.25\baselineskip
     Gate 5
   \end{minipage}%
\caption{Test suite 1. Reconstructed images of the multi-object phantom. The columns represent the different gates. For the noise-free data, the first three rows are the reconstructed spatiotemporal images by \ac{TV}-based reconstruction method (row 1), the algorithm using $\LpSpace^2$-gradient descent method (row 2), and the proposed method (row 3). The last row (row 4) shows the ground truth for each gate.}
\label{Test_suite_1:multi_object_phantom}
\end{figure}

\begin{figure}[htbp]
\centering
\begin{minipage}[t]{0.5\textwidth}%
     \centering
     \includegraphics[width=\textwidth]{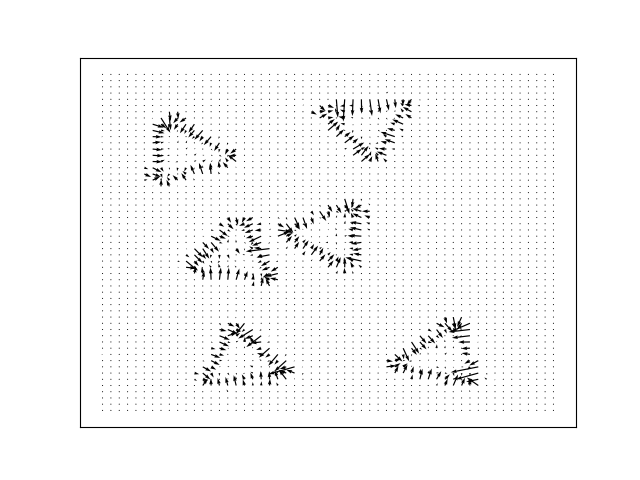}
     \vskip-0.25\baselineskip
   \end{minipage}%
   \hfill
\begin{minipage}[t]{0.5\textwidth}%
     \centering
    \includegraphics[width=\textwidth]{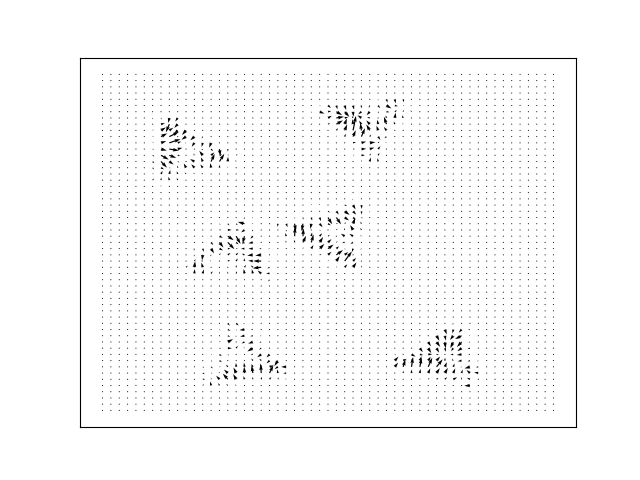}
     \vskip-0.25\baselineskip
   \end{minipage}%
 \\       
 \begin{minipage}[t]{0.5\textwidth}%
     \centering
     \includegraphics[width=\textwidth]{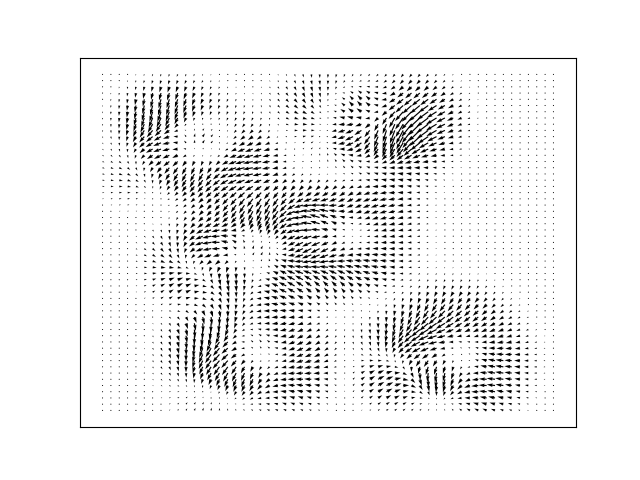}
     \vskip-0.25\baselineskip
     $t = 0$
   \end{minipage}%
   \hfill
\begin{minipage}[t]{0.5\textwidth}%
     \centering
    \includegraphics[width=\textwidth]{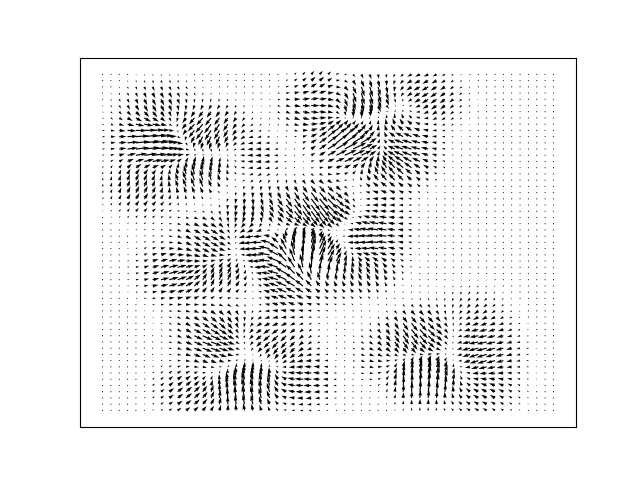}
     \vskip-0.25\baselineskip
     $t = 1$
   \end{minipage}%
\caption{Test suite 1. The computed optimal velocity field at the end time points $t=0$ (left) and $t=1$ (right) 
by the $\LpSpace^2$-gradient descent scheme (top) and the proposed method (bottom) in \cref{Test_suite_1:multi_object_phantom}, respectively. }
\label{Test_suite_1:display_velocity_field}
\end{figure}

Apart from the visual contrast, the reconstruction is quantitatively compared 
using \ac{SSIM}, \ac{PSNR} and \ac{NRMSE}, which is frequently used to evaluate the 
image quality of reconstruction \cite{WaBoShSi04}. The larger of the first two indexes implies the better image quality. 
But the larger of the last index means the worse image quality. For the reconstruction results of 
different methods with the same noise-free data, the values of \ac{SSIM}, \ac{PSNR} and \ac{NRMSE} of 
the reconstructed spatiotemporal images compared to the related ground truths 
are tabulated in \cref{Test_suite_1:noise_free_quantitative}. 
\begin{table}[htbp]
\centering
\begin{tabular}{c | r r r r r}
&  \multicolumn{1}{c}{Gate 1} &  \multicolumn{1}{c}{Gate 2} & \multicolumn{1}{c}{Gate 3} 
 &  \multicolumn{1}{c}{Gate 4}  &  \multicolumn{1}{c}{Gate 5} \\ 
\hline                                   
 \multirow{3}{*}{\ac{TV}}  &0.9571     &0.9609   &0.9416   &0.9279  &0.9350  \\
     								 &26.70       &28.15      &26.58    &25.31    &27.05 \\
     								 &0.1283      &0.1102    &0.1355  &0.1628  &0.1397  \\
\hline                                   
 \multirow{3}{*}{$\LpSpace^2$ gradient}  &0.8749       &0.8686   &  0.8650  &0.8644  &0.8677   \\
                                          & 22.36         &20.73    &  20.37    &20.65  &21.28  \\
                                          & 0.2115       &0.2591   & 0.2770  &0.2784  &0.2716\\
\hline
 \multirow{3}{*}{Proposed}    &0.9819   &0.9879   &0.9893   &0.9892   &0.9874   \\  
     								   &31.60    &36.20      &38.21      &38.10      &35.83   \\
     								   &0.0729  &0.0437   &0.0355   &0.0373    &0.0509  \\
     \hline
\end{tabular}
\caption{Test suite 1. The values of \ac{SSIM}, \ac{PSNR} and \ac{NRMSE} of the reconstructed spatiotemporal images compared to the related ground truths for the noise-free measurements, see \cref{Test_suite_1:multi_object_phantom} for reconstructed images. Each table entry has three values that the upper is the value of \ac{SSIM}, the middle is the value of \ac{PSNR}, and the bottom is the value of \ac{NRMSE}, which corresponds to the image at the counterpart position of rows 1--3 in \cref{Test_suite_1:multi_object_phantom}.}
\label{Test_suite_1:noise_free_quantitative}
\end{table}

As compared these values with each other, the values of \ac{SSIM} by the proposed method is bigger than 
those by \ac{TV}-based method and using $\LpSpace^2$-gradient descent method. Additionally, the values 
of \ac{PSNR} by the proposed method are much bigger than the those by the other two methods. 
And the values of \ac{NRMSE} by the proposed method are much smaller than those by the other two methods. 
The statements are also consistent with the visual observation in \cref{Test_suite_1:multi_object_phantom}. 

Hence both visual and quantitative comparisons demonstrate that the reconstructed images by the proposed method is much  
more approximated to the corresponding ground truths. In other words, the proposed method largely improved 
the quality of the reconstructed images. 

Finally, the masses of the reconstructed images are hopefully to be preserved. In order to inspect this characteristic, 
we obtain all of the masses of the images in \cref{Test_suite_1:multi_object_phantom}.  
As listed in \cref{Test_suite_1:mass_preserved_property}, the values of the mass of ground truths are all $111.75$ for 
that we uses the originally sequential images with the same masses. It is clear in \cref{Test_suite_1:mass_preserved_property} that 
the mass of the reconstructed images is preserved very well during the numerical implementation of 
the proposed method, which is better than the $\LpSpace^2$-gradient descent method. 
Moreover, we found that the mass of the result at each gate by the proposed method is almost the 
same as \ac{TV}-based method. Since the \ac{TV}-based reconstruction method is implemented gate by gate, 
the mass of the result at each gate should be the same essentially. Even though the masses 
have a little bit errors compared with the ground truths, that is reasonable because these images are reconstructed only from six-angle 
projection data. Hence, the proposed method has desirable performance on the mass-preserving property. 

\begin{table}[htbp]
\centering
\begin{tabular}{c | r r r r r}
&  \multicolumn{1}{c}{Gate 1} &  \multicolumn{1}{c}{Gate 2} & \multicolumn{1}{c}{Gate 3} 
 &  \multicolumn{1}{c}{Gate 4}  &  \multicolumn{1}{c}{Gate 5} \\ 
\hline                                   
 \multirow{1}{*}{\ac{TV}}  &112.20     &112.21   &112.18   &112.19  &112.21  \\
                                  
 \multirow{1}{*}{$\LpSpace^2$ gradient}  &113.83    &112.83   &111.58  &110.92  &110.54   \\
 
 \multirow{1}{*}{Proposed}    &112.02   &112.15   &112.22   &112.27   &112.24   \\  
 
 \multirow{1}{*}{Ground truth}  &111.75    &111.75  &111.75  &111.75  &111.75  \\
     \hline
\end{tabular}
\caption{Test suite 1. The values of masses of the reconstructed images, which corresponds to the image at the counterpart position in \cref{Test_suite_1:multi_object_phantom}.}
\label{Test_suite_1:mass_preserved_property}
\end{table}

\subsubsection{Test suite 2: Robustness against different noise levels}
\label{sec:Robustness}

The images are reconstructed by using the noise-free data in test suite 1. 
To test the robustness against different noise levels of the proposed method, here the same 
multi-object phantom is used, and three different levels of additive Gaussian 
white noise are added onto the above noise-free data. The resulting \ac{SNR} are about $14.6$dB, $7.69$dB, and $5.53$dB, respectively. 
To show the noise levels more clear, we profile the noise-free and noisy projection data of the first view of at 
Gate 1 in \cref{Test_suite_2:sinogram}. To some extent, such three data sets can be seen as the cases of low, 
moderate and high noise levels correspondingly. 
\begin{figure}[htbp]
\centering
\begin{minipage}[t]{0.33\textwidth}%
     \centering
     \includegraphics[trim=30 15 30 40, clip, width=\textwidth]{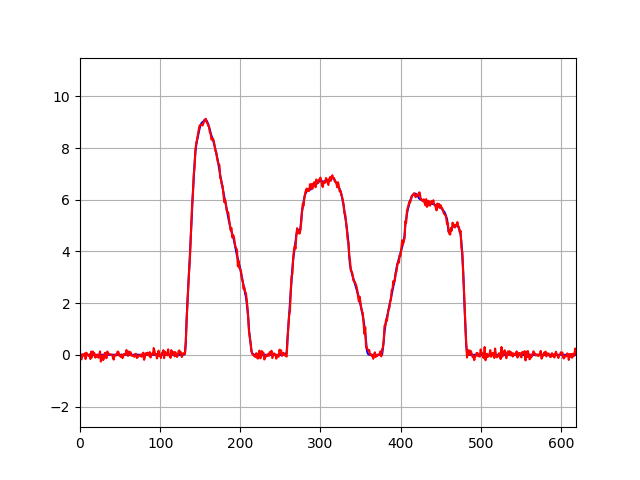}
     \vskip-0.25\baselineskip
   \end{minipage}%
   \hfill
   \begin{minipage}[t]{0.33\textwidth}%
     \centering
    \includegraphics[trim=30 15 30 40, clip, width=\textwidth]{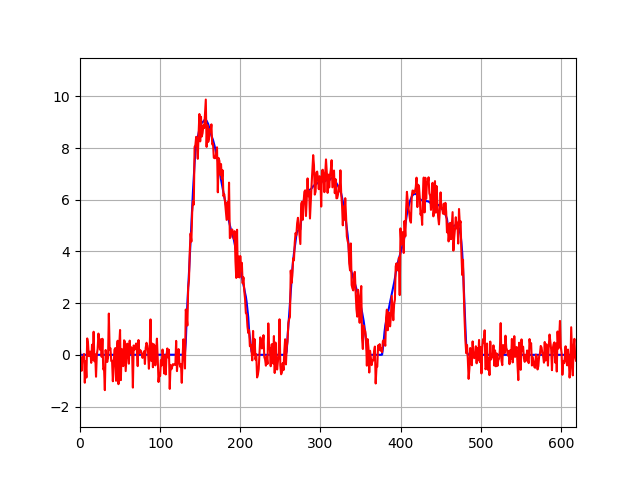}
     \vskip-0.25\baselineskip
   \end{minipage}%
   \hfill
   \begin{minipage}[t]{0.33\textwidth}%
     \centering
    \includegraphics[trim=30 15 30 40, clip, width=\textwidth]{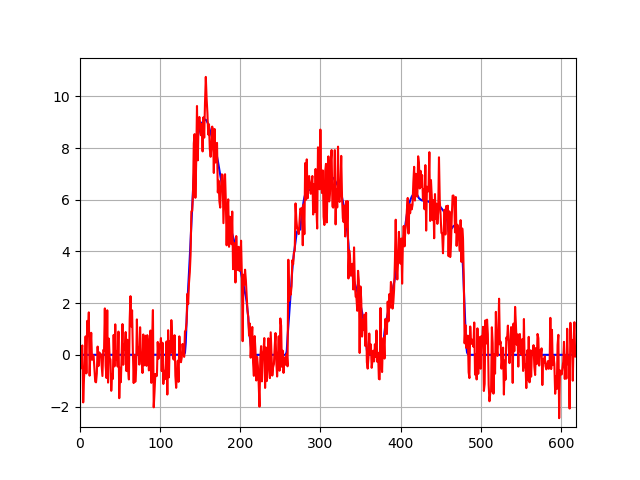}
     \vskip-0.25\baselineskip
   \end{minipage}%
   \caption{Test suite 2. Data of the first projection view at Gate 1. The left, middle, and right figures show the data of the first view with $14.6$dB, $7.69$dB, and $5.53$dB noise levels, respectively. The blue curve denotes the noise-free data, and the red jagged curve shows the noisy data.}
\label{Test_suite_2:sinogram}
\end{figure}

During numerical implementations, the regularization parameters ($\mu_1, \mu_2$) are 
selected as $(0.05, 10^{-7})$, $(0.1, 10^{-7})$ and $(0.15, 10^{-7})$ for the data with $14.6$dB, $7.69$dB 
and $5.53$dB noise levels, respectively. The lower \ac{SNR}, the lager value of $\mu_1$ for the spatio regularization term. 
The maximum iteration number is set to be $2000$ for sufficiently numerical convergence. The associated stepsizes 
are set as $\alpha = 0.001$ and $\beta = 0.005$. As before, the initial template is obtained by \cref{algo:GDSB_4DCT} for 50 
iterations using all of the data with given zero velocity field. Then we use \cref{algo:Alternating_reconstruction} to solve the 
proposed model with the obtained initial template and zero initialized velocity field. 
The reconstructed results are shown in \cref{Test_suite_2:different_noise_levels}. 
It is demonstrated that the reconstructed images by the proposed method (rows~1, 3 and 5) are close to the 
corresponding ground truth in \cref{Test_suite_1:multi_object_phantom}, even though the noise level of the data is higher and higher.  
Additionally, the image at each single gate is also reconstructed by the \ac{TV}-based method for numerical comparison, 
as shown in rows~2, 4 and 6 of \cref{Test_suite_2:different_noise_levels} for each the same noise level data. Obviously, 
the reconstructed results by the proposed method is much better than the \ac{TV}-based method. 
\begin{figure}[htbp]
\centering
\begin{minipage}[t]{0.2\textwidth}%
     \centering
     \includegraphics[trim=75 25 60 40, clip, width=\textwidth]{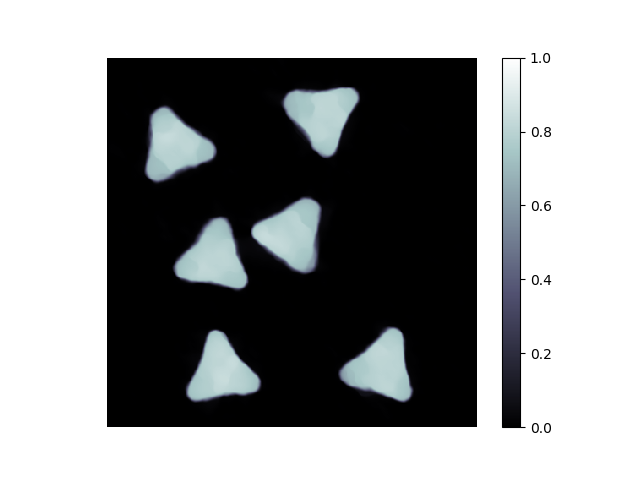}
     \vskip-0.25\baselineskip
   \end{minipage}%
   \hfill
   \begin{minipage}[t]{0.2\textwidth}%
     \centering
    \includegraphics[trim=75 25 60 40, clip, width=\textwidth]{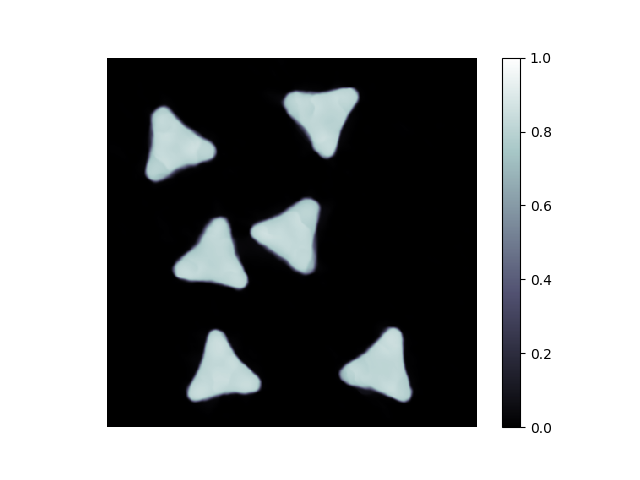}
     \vskip-0.25\baselineskip
   \end{minipage}%
   \hfill
   \begin{minipage}[t]{0.2\textwidth}%
     \centering
     \includegraphics[trim=75 25 60 40, clip, width=\textwidth]{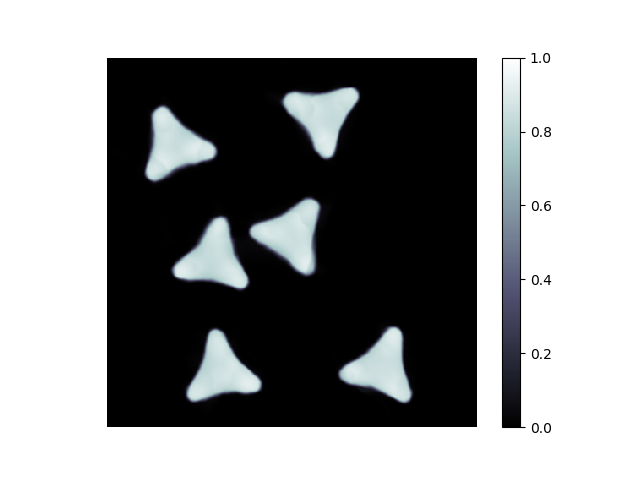}
     \vskip-0.25\baselineskip
   \end{minipage}%
      \hfill
   \begin{minipage}[t]{0.2\textwidth}%
     \centering
     \includegraphics[trim=75 25 60 40, clip, width=\textwidth]{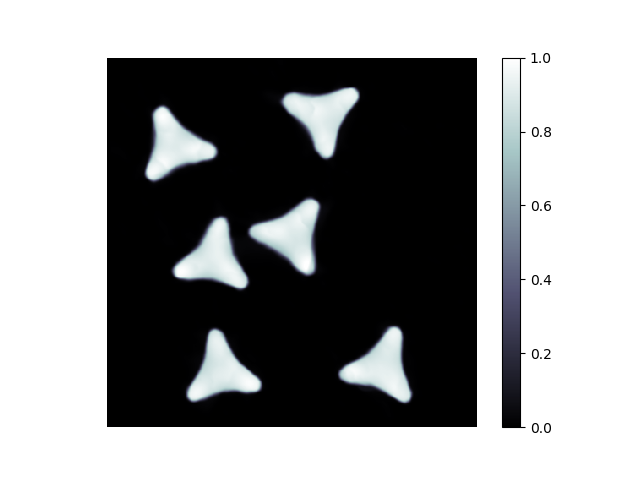}     
     \vskip-0.25\baselineskip
   \end{minipage}%
   \hfill
   \begin{minipage}[t]{0.2\textwidth}%
     \centering
    \includegraphics[trim=75 25 60 40, clip, width=\textwidth]{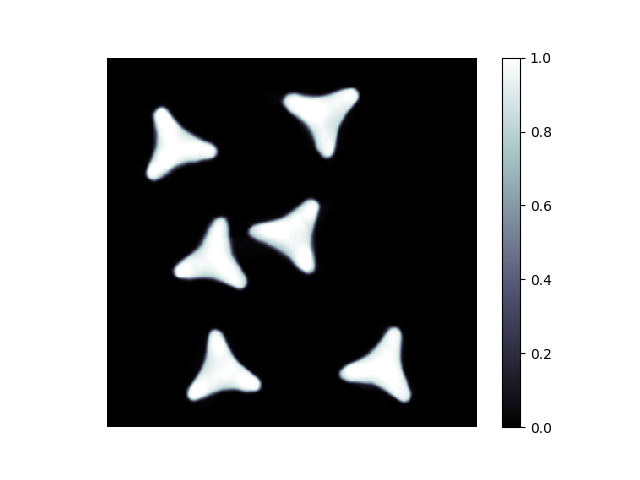}
     \vskip-0.25\baselineskip
   \end{minipage}%
   \hfill
   \begin{minipage}[t]{0.2\textwidth}%
     \centering
     \includegraphics[trim=75 25 60 40, clip, width=\textwidth]{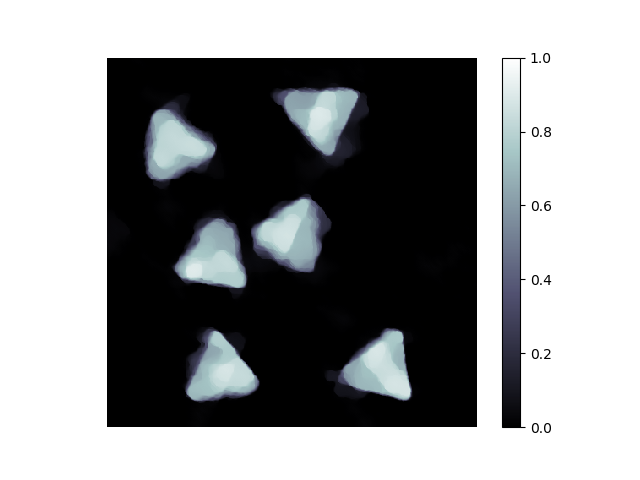}
     \vskip-0.25\baselineskip
   \end{minipage}%
   \hfill
   \begin{minipage}[t]{0.2\textwidth}%
     \centering
    \includegraphics[trim=75 25 60 40, clip, width=\textwidth]{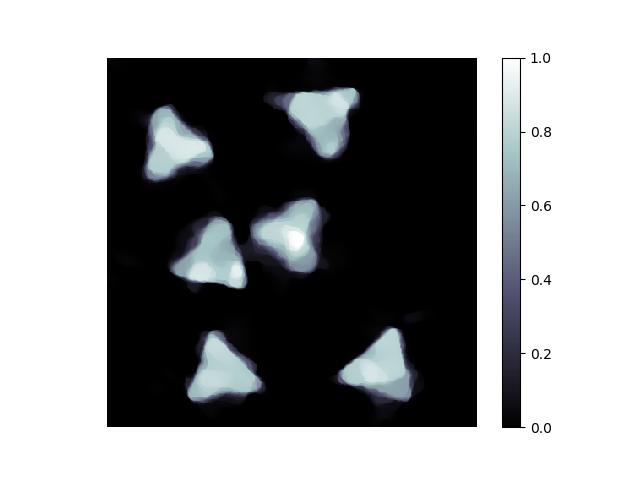}
     \vskip-0.25\baselineskip
   \end{minipage}%
   \hfill
   \begin{minipage}[t]{0.2\textwidth}%
     \centering
    \includegraphics[trim=75 25 60 40, clip, width=\textwidth]{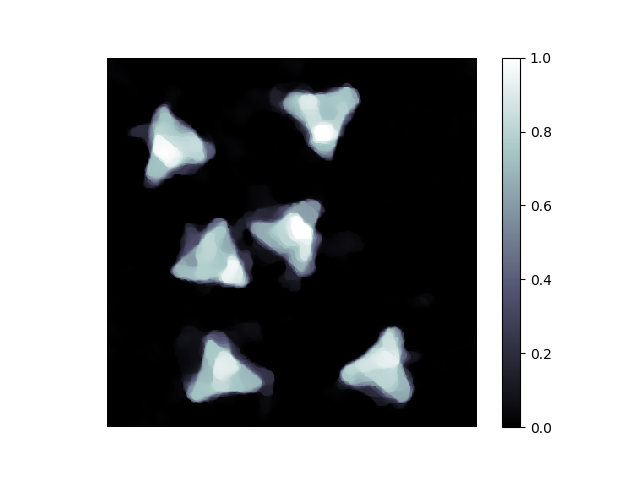}
     \vskip-0.25\baselineskip
   \end{minipage}%
   \hfill
   \begin{minipage}[t]{0.2\textwidth}%
     \centering
    \includegraphics[trim=75 25 60 40, clip, width=\textwidth]{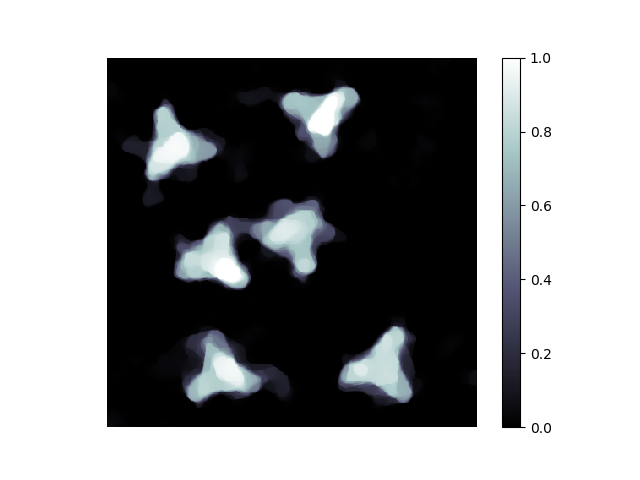}
     \vskip-0.25\baselineskip
   \end{minipage}%
   \hfill
   \begin{minipage}[t]{0.2\textwidth}%
     \centering
    \includegraphics[trim=75 25 60 40, clip, width=\textwidth]{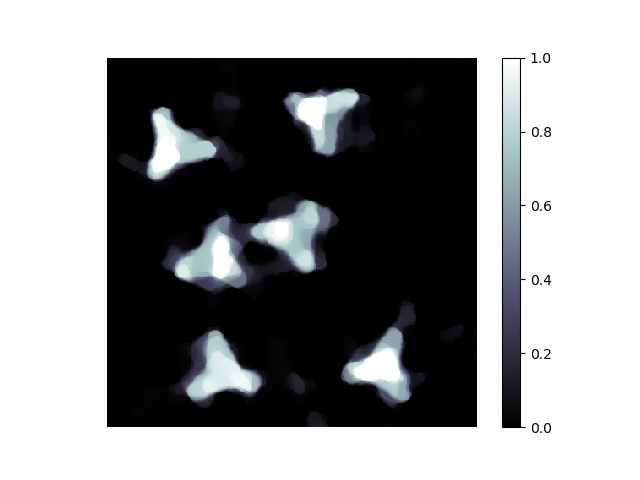}
     \vskip-0.25\baselineskip
   \end{minipage}%
   \hfill
\par\medskip      
\begin{minipage}[t]{0.2\textwidth}%
     \centering
     \includegraphics[trim=75 25 60 40, clip, width=\textwidth]{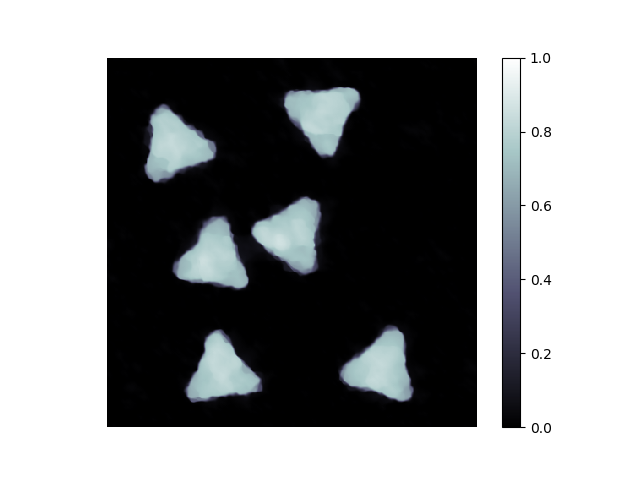}
     \vskip-0.25\baselineskip
   \end{minipage}%
   \hfill
   \begin{minipage}[t]{0.2\textwidth}%
     \centering
    \includegraphics[trim=75 25 60 40, clip, width=\textwidth]{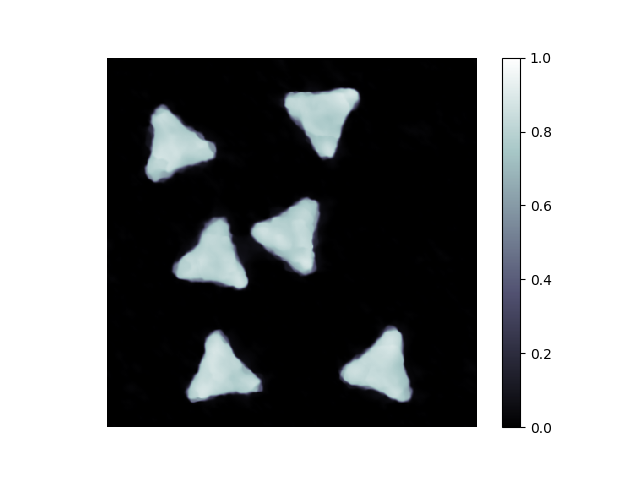}
     \vskip-0.25\baselineskip
   \end{minipage}%
   \hfill
   \begin{minipage}[t]{0.2\textwidth}%
     \centering
     \includegraphics[trim=75 25 60 40, clip, width=\textwidth]{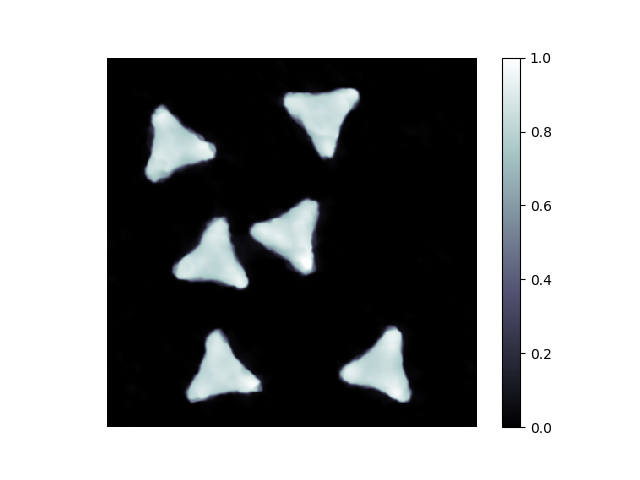}
     \vskip-0.25\baselineskip
   \end{minipage}%
      \hfill
   \begin{minipage}[t]{0.2\textwidth}%
     \centering
     \includegraphics[trim=75 25 60 40, clip, width=\textwidth]{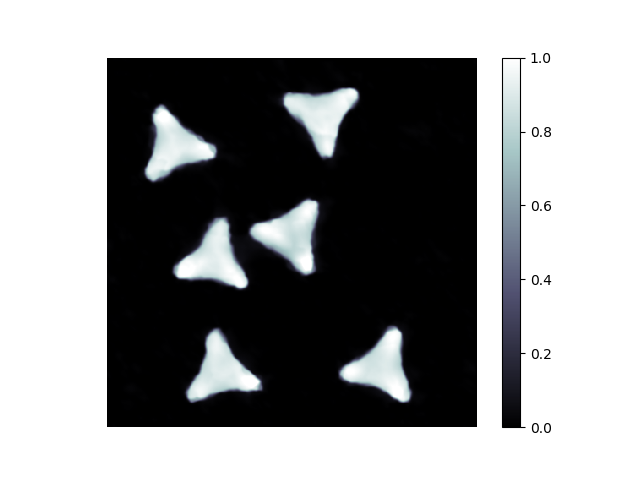}     
     \vskip-0.25\baselineskip
   \end{minipage}%
   \hfill
   \begin{minipage}[t]{0.2\textwidth}%
     \centering
    \includegraphics[trim=75 25 60 40, clip, width=\textwidth]{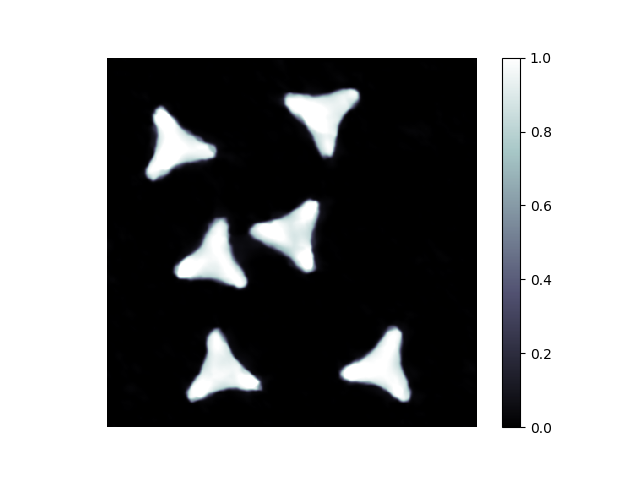}
     \vskip-0.25\baselineskip
   \end{minipage}%
         \hfill
   \begin{minipage}[t]{0.2\textwidth}%
     \centering
     \includegraphics[trim=75 25 60 40, clip, width=\textwidth]{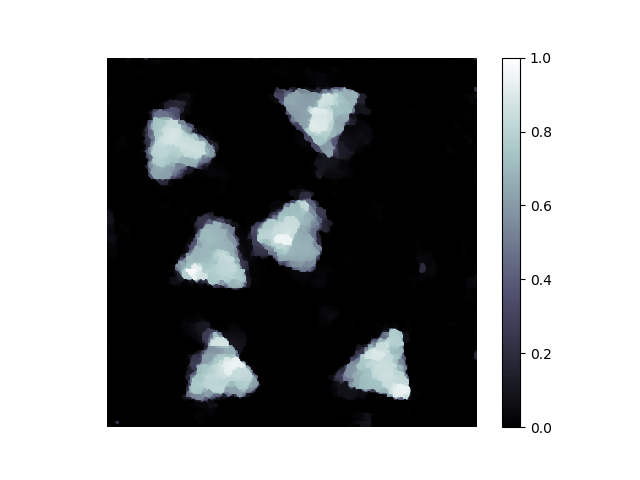}
     \vskip-0.25\baselineskip
   \end{minipage}%
   \hfill
   \begin{minipage}[t]{0.2\textwidth}%
     \centering
    \includegraphics[trim=75 25 60 40, clip, width=\textwidth]{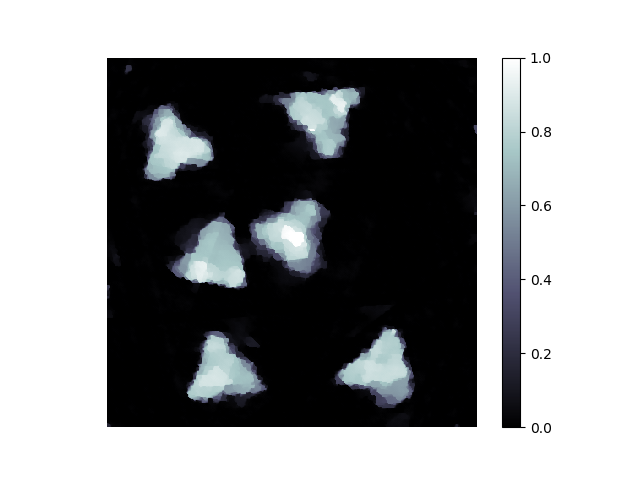}
     \vskip-0.25\baselineskip
   \end{minipage}%
   \hfill
   \begin{minipage}[t]{0.2\textwidth}%
     \centering
    \includegraphics[trim=75 25 60 40, clip, width=\textwidth]{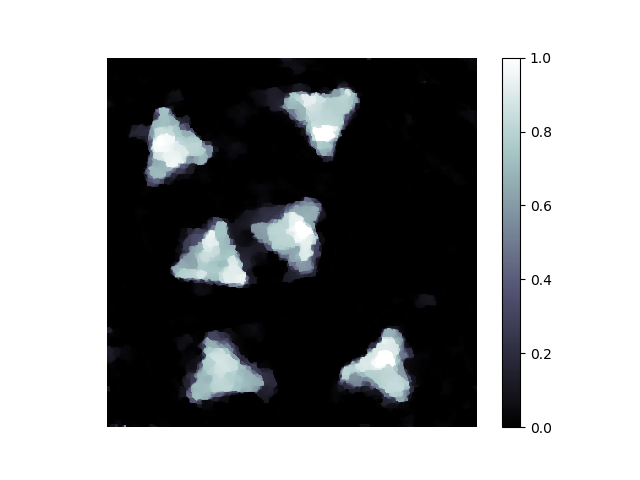}
     \vskip-0.25\baselineskip
   \end{minipage}%
   \hfill
   \begin{minipage}[t]{0.2\textwidth}%
     \centering
    \includegraphics[trim=75 25 60 40, clip, width=\textwidth]{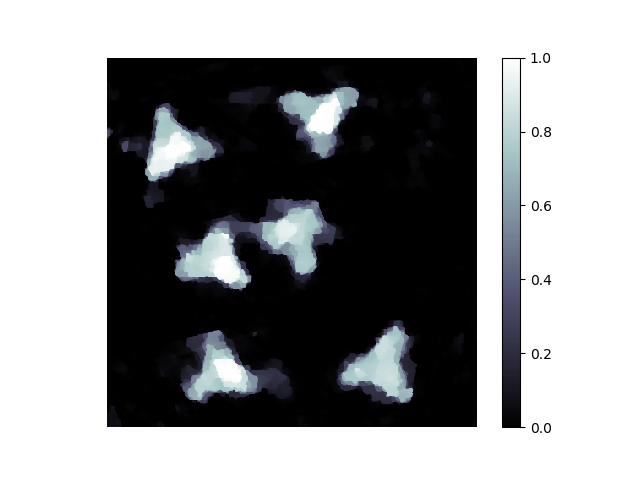}
     \vskip-0.25\baselineskip
   \end{minipage}%
   \hfill
   \begin{minipage}[t]{0.2\textwidth}%
     \centering
    \includegraphics[trim=75 25 60 40, clip, width=\textwidth]{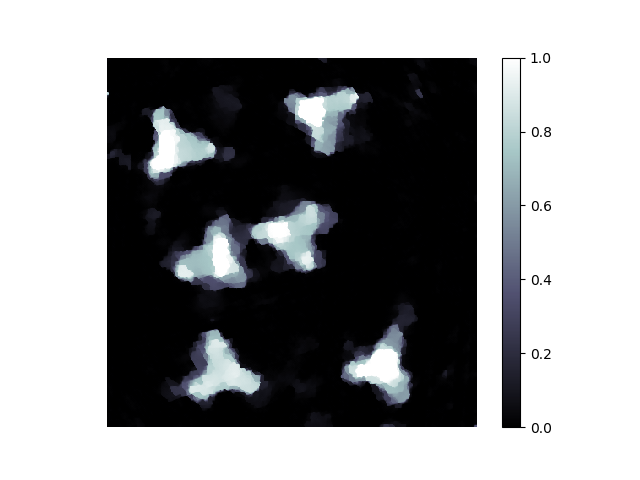}
     \vskip-0.25\baselineskip
   \end{minipage}%
   \hfill
\par\medskip      
\begin{minipage}[t]{0.2\textwidth}%
     \centering
     \includegraphics[trim=75 25 60 40, clip, width=\textwidth]{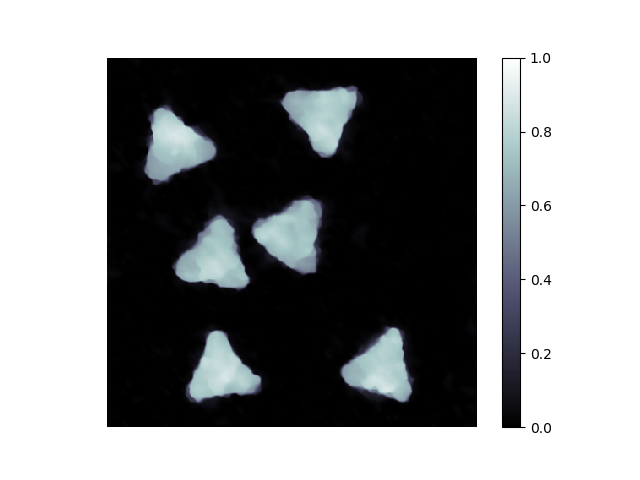}
     \vskip-0.25\baselineskip
   \end{minipage}%
   \hfill
   \begin{minipage}[t]{0.2\textwidth}%
     \centering
    \includegraphics[trim=75 25 60 40, clip, width=\textwidth]{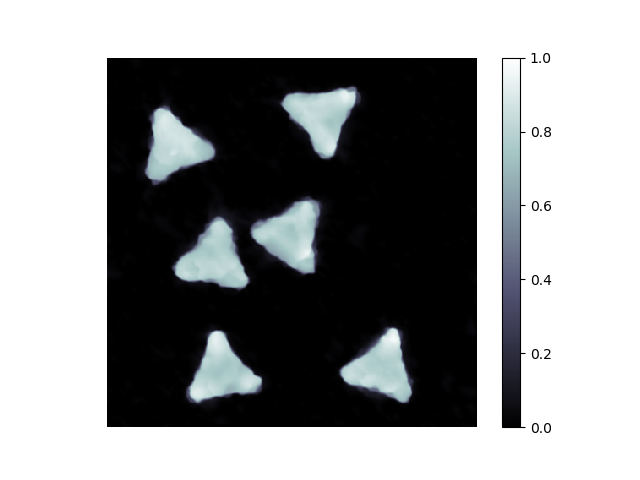}
     \vskip-0.25\baselineskip
   \end{minipage}%
   \hfill
   \begin{minipage}[t]{0.2\textwidth}%
     \centering
     \includegraphics[trim=75 25 60 40, clip, width=\textwidth]{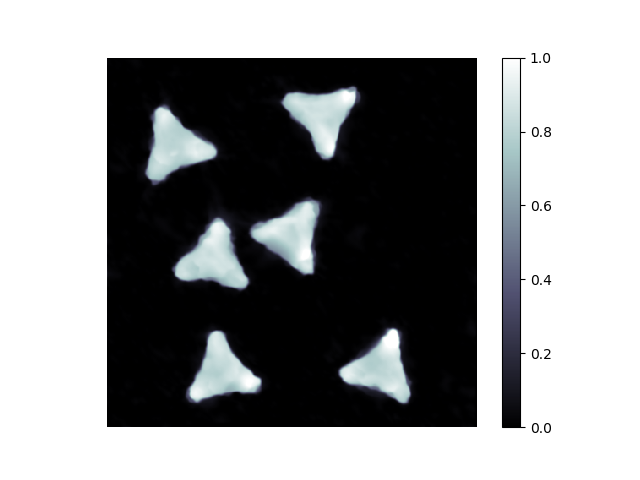}
     \vskip-0.25\baselineskip
   \end{minipage}%
      \hfill
   \begin{minipage}[t]{0.2\textwidth}%
     \centering
     \includegraphics[trim=75 25 60 40, clip, width=\textwidth]{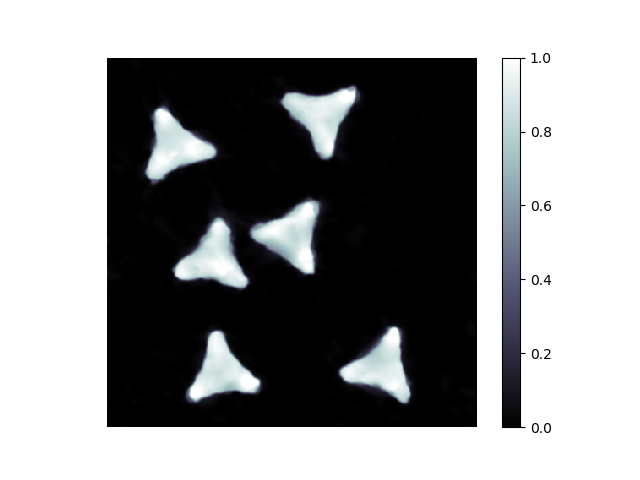}     
     \vskip-0.25\baselineskip
   \end{minipage}%
   \hfill
   \begin{minipage}[t]{0.2\textwidth}%
     \centering
    \includegraphics[trim=75 25 60 40, clip, width=\textwidth]{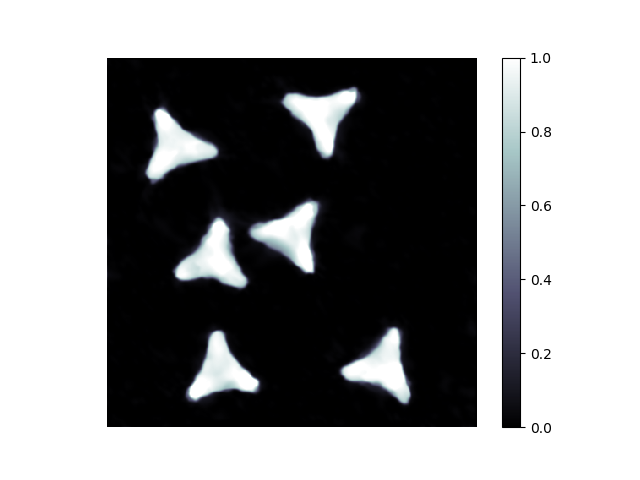}
     \vskip-0.25\baselineskip
   \end{minipage}%
\hfill     
   \begin{minipage}[t]{0.2\textwidth}%
     \centering
     \includegraphics[trim=75 25 60 40, clip, width=\textwidth]{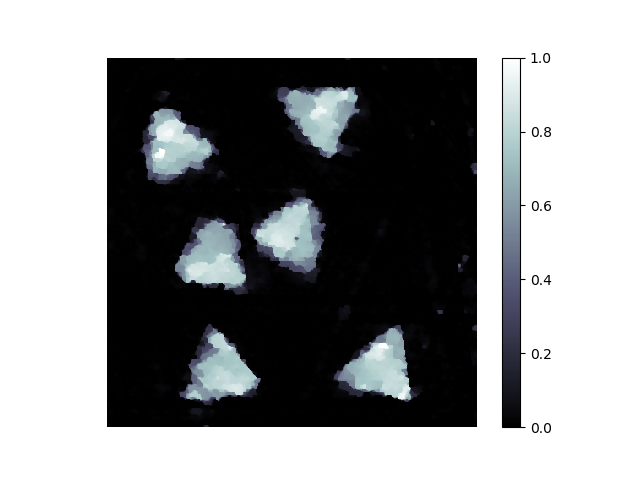}
     \vskip-0.25\baselineskip
     Gate 1
   \end{minipage}%
\hfill
   \begin{minipage}[t]{0.2\textwidth}%
     \centering
     \includegraphics[trim=75 25 60 40, clip, width=\textwidth]{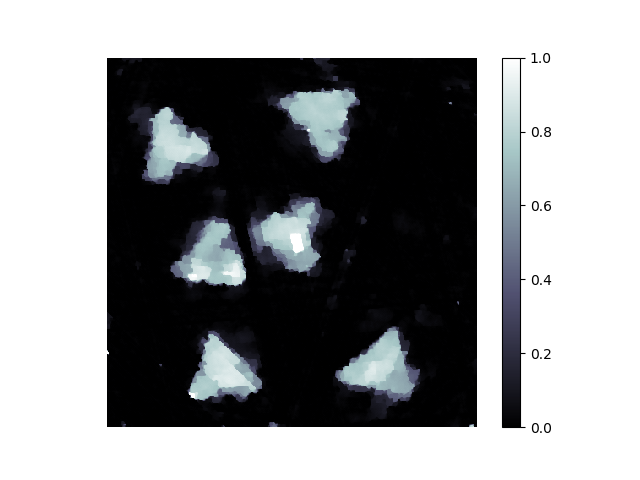}
     \vskip-0.25\baselineskip
     Gate 2
   \end{minipage}%
   \hfill
   \begin{minipage}[t]{0.2\textwidth}%
     \centering
    \includegraphics[trim=75 25 60 40, clip, width=\textwidth]{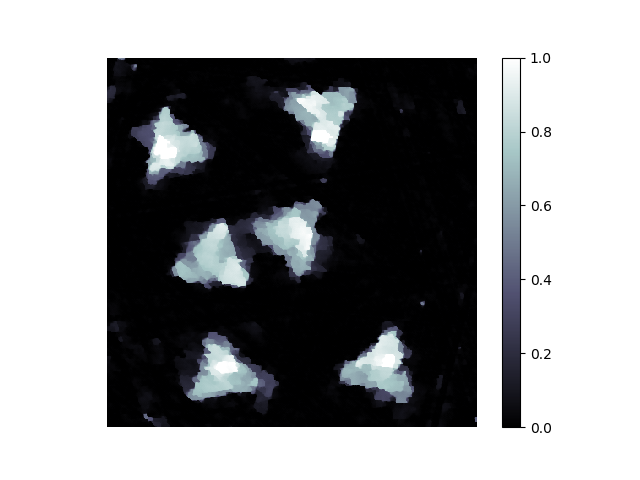}
     \vskip-0.25\baselineskip
          Gate 3
   \end{minipage}%
   \hfill
   \begin{minipage}[t]{0.2\textwidth}%
     \centering
     \includegraphics[trim=75 25 60 40, clip, width=\textwidth]{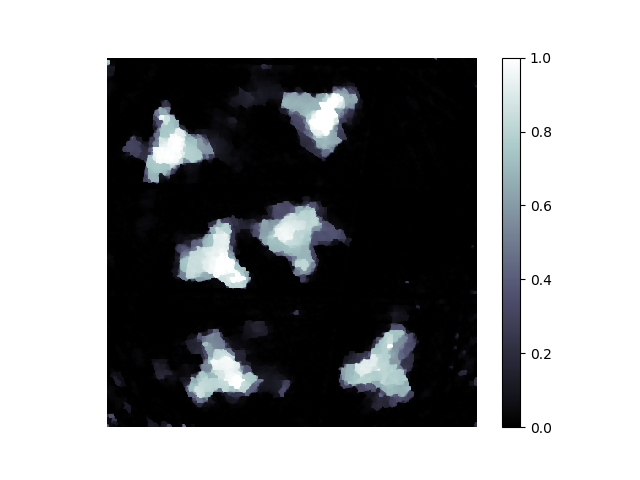}
     \vskip-0.25\baselineskip
     Gate 4
   \end{minipage}%
   \hfill
   \begin{minipage}[t]{0.2\textwidth}%
     \centering
     \includegraphics[trim=75 25 60 40, clip, width=\textwidth]{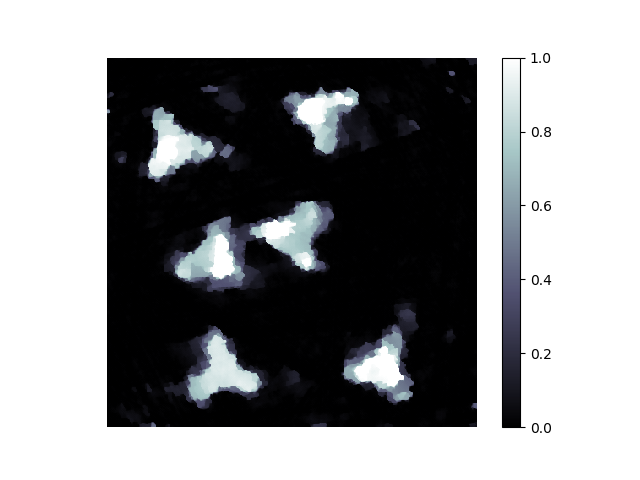}
     \vskip-0.25\baselineskip
     Gate 5
   \end{minipage}%
\caption{Test suite 2. Reconstructed spatiotemporal images from the data with different noise levels. The columns represent the different gates. The reconstructed results by the proposed method and \ac{TV}-based reconstruction method using about $14.6$dB data (rows 1 and 2), $7.69$dB data (rows 3 and 4), and $5.53$dB data (rows 5 and 6), respectively. The ground truth at each gate is displayed in the last row of \cref{Test_suite_1:multi_object_phantom}.}
\label{Test_suite_2:different_noise_levels}
\end{figure}

Moreover, as we did in test suite 1, the reconstruction results are quantitatively 
compared by using the indexes of \ac{SSIM}, \ac{PSNR} and \ac{NRMSE}. 
For the reconstruction results of various methods for different noise level data, the values 
of \ac{SSIM}, \ac{PSNR} and \ac{NRMSE} of the reconstructed spatiotemporal images compared to 
the corresponding ground truths are tabulated in \cref{Test_suite_2:noise_levels_quantitative}. 
\begin{table}[htbp]
\centering
\begin{tabular}{c | r r r r r}
&  \multicolumn{1}{c}{Gate 1} &  \multicolumn{1}{c}{Gate 2} & \multicolumn{1}{c}{Gate 3} 
 &  \multicolumn{1}{c}{Gate 4}  &  \multicolumn{1}{c}{Gate 5} \\ 
\hline                                   
 \multirow{3}{*}{Proposed}  &0.9498     &0.9660   &0.9699   &0.9697  &0.9661  \\
     								 &26.47       &30.86      &32.33    &32.43    &31.18 \\
     								 &0.1317      &0.0807    &0.0699  &0.0717  &0.0869  \\[1mm]                          
\multirow{3}{*}{\ac{TV}}  &0.8827     &0.8799   &0.8548   &0.8440  &0.8316  \\
     								 &21.68       &21.74     &20.94    &20.09    &20.43 \\
     								 &0.2286      & 0.2307    &0.2594  &0.2968  &0.2994  \\
\hline                                   
  \multirow{3}{*}{Proposed}  &0.9050     &0.9239   &0.9294   &0.9306  &0.9295  \\
     								 &24.35       &27.24      &27.99    &28.12    &28.06 \\
     								 &0.1681      &0.1224    &0.1153  &0.1177  &0.1244  \\[1mm]                          
\multirow{3}{*}{\ac{TV}}  &0.8372     &0.8431   &0.8252   &0.7930  &0.7933  \\
     								 &20.28       &20.48     &20.15    &19.39    &19.70 \\
     								 &0.2685      &0.2667    &0.2842  &0.3217  &0.3257  \\
\hline
  \multirow{3}{*}{Proposed}  &0.8564     &0.8770   &0.8838   &0.8861  &0.8856  \\
     								 &22.98       &25.17      &25.62    &26.29    &26.14 \\
     								 &0.1968      &0.1553    &0.1515  &0.1454  &0.1552  \\[1mm]                          
\multirow{3}{*}{\ac{TV}}  &0.8086     &0.7784   &0.7514   &0.7521  &0.7618  \\
     								 &19.82       &19.14     &18.78    &18.62    &19.36 \\
     								 &0.2833      &0.3112    &0.3327  &0.3518  &0.3388  \\
     \hline
\end{tabular}
\caption{Test suite 2. The values of \ac{SSIM}, \ac{PSNR} and \ac{NRMSE} of the reconstructed spatiotemporal images compared to the related ground truths for the different noise level measurements, see \cref{Test_suite_2:different_noise_levels} for detailed images. The upper and bottom of each row denote the results obtained by the proposed method and \ac{TV}-based reconstruction method respectively. Each table entry has three values that the upper is the value of \ac{SSIM}, the middle is the value of \ac{PSNR}, and the bottom is the value of \ac{NRMSE}, which corresponds to the image at the counterpart position in \cref{Test_suite_2:different_noise_levels}.}
\label{Test_suite_2:noise_levels_quantitative}
\end{table}

As listed in \cref{Test_suite_2:noise_levels_quantitative}, the associated values of \ac{SSIM} and \ac{PSNR} obtained by  
the proposed method is much bigger than \ac{TV}-based method. And the values of \ac{NRMSE} by 
the proposed method are much smaller than those by the \ac{TV}-based method. These statements are also consistent 
with the visual observation in \cref{Test_suite_2:different_noise_levels}. 
 
Hence the visual and quantitative comparisons demonstrate that the reconstructed images by the proposed method is much  
more approximated to the corresponding ground truths. Even if the projection data is disturbed by  
different noise levels, the proposed method is able to produce desirable results robustly, which can track the motions of the objects and 
reconstruct the sequential images accurately.

\subsubsection{Test suite 3: Sensitivity against selections of regularization parameters}

There are three regularization parameters $\mu_1$, $\mu_2$ and kernel width $\sigma$ required to select in the proposed model. 
The meaning of them has been illuminated in the previous sections. Hence the sensitivity test should be concerned 
against the selection of these parameters.  

A heart-like phantom at the first gate is used in this test, which is originated from \cite{GrMi07}. 
To produce the ground truths at the other gates, we take the given mass-preserving deformations 
against the phantom above. As shown in the last row of \cref{Test_suite_3:heart_phantom}, 
the ground truth at each gate is consisting of a heart-like object with different grey-value ranges. 
These images are digitized using $120\!\times\!120$ pixels, and displayed on a fixed rectangular 
domain $[-4.5, 4.5\!\times\![-4.5, 4.5]$. For the image at each gate, the noise-free data per view 
is measured by evaluating the 2D parallel beam scanning geometry with uniformly $170$ bins, 
which is defined on the range of $[-6.4, 6.4]$. Then the additive Gaussian white noise is added onto the noise-free data. 
The resulting \ac{SNR} is about $13$dB. For gate $i\,(1 \le i \le N)$, the scanning views are distributed 
on $[(i-1)\pi /5, \pi + (i-1) \pi/5]$ evenly, which totally has five views. The factor of discretized time degree is $M = 8$. 
The gradient stepsizes are set as $\alpha = 0.01$ and $\beta = 0.05$, respectively.

Having a good initial template is important to the final result. Using the same method as the previous test suites to get the 
initial template does not work here. That is because the degree of motions involved in this test is much lager than the previous ones. 
To this case, it is hard to obtain an applicable initial template by the \cref{algo:GDSB_4DCT} with given zero velocity 
field using all of gated data. So we first employ \cref{algo:GDSB_4DCT} to gain an initial template just by the 
projection data at the fist gate by 2000 iterations with the given zero velocity field, 
which is equivalent to apply the \ac{TV}-based reconstruction, 
and then apply \cref{algo:GradientDescentAlgorithmForFiniteFunctional_0} to obtain an initial velocity field 
by 500 iterations based on the initial template above. Note that the setting of the above iteration numbers is flexible, 
and the aim is to get good initial template and initial velocity field. 
Starting from these initialized values, we finally use \cref{algo:Alternating_reconstruction} to solve the proposed model. 
By selecting different values for regularization parameters and kernel width, after sufficiently 500 iterations, 
the reconstructed results are obtained, as shown in rows 2--5 of \cref{Test_suite_3:heart_phantom}. 
The detailed selections of varying parameter values can be referred to the caption. 
For comparison, we also present the reconstructed image at each single gate 
using \ac{TV}-based regularization method, as displayed in the first row of \cref{Test_suite_3:heart_phantom}. 
As shown in \cref{Test_suite_3:heart_phantom}, even through we choose different values for these regularization parameters, 
the corresponding reconstructed results by the proposed method are almost the same, and all close to the counterpart ground truths. 
However, the reconstructed result by \ac{TV}-regularization is severely degraded. 
\begin{figure}[htbp]
\centering
\begin{minipage}[t]{0.2\textwidth}%
     \centering
     \includegraphics[trim=75 25 60 40, clip, width=\textwidth]{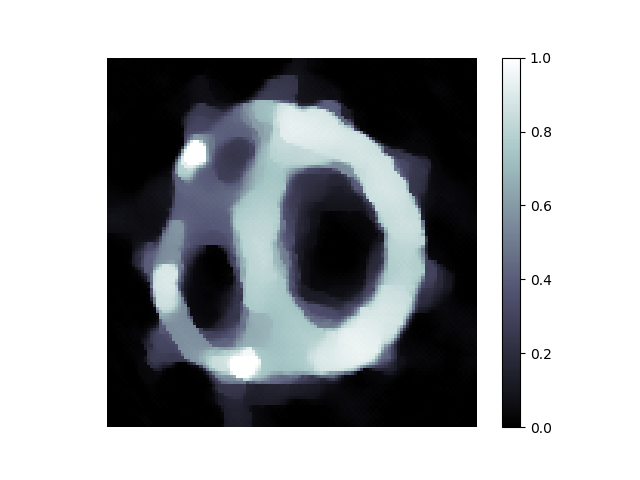}
     \vskip-0.25\baselineskip
   \end{minipage}%
   \hfill
   \begin{minipage}[t]{0.2\textwidth}%
     \centering
    \includegraphics[trim=75 25 60 40, clip, width=\textwidth]{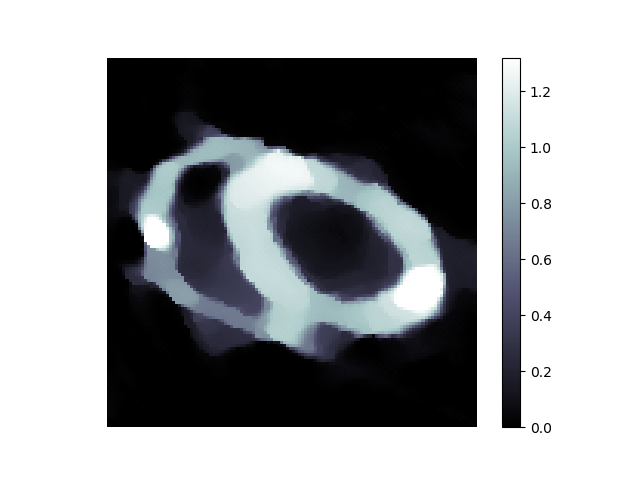}
     \vskip-0.25\baselineskip
   \end{minipage}%
   \hfill
   \begin{minipage}[t]{0.2\textwidth}%
     \centering
     \includegraphics[trim=75 25 60 40, clip, width=\textwidth]{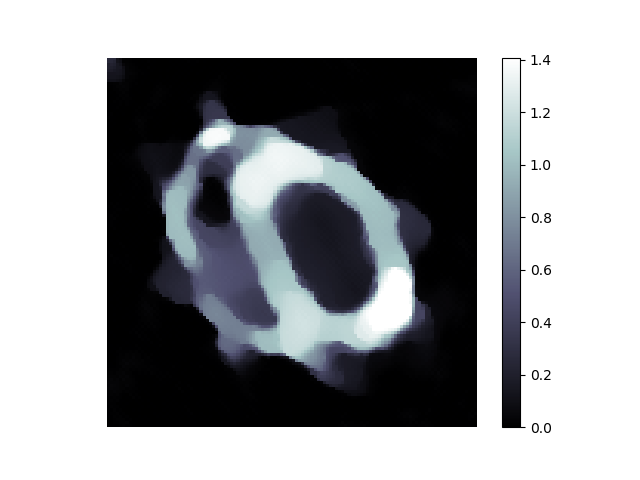}
     \vskip-0.25\baselineskip
   \end{minipage}%
      \hfill
   \begin{minipage}[t]{0.2\textwidth}%
     \centering
     \includegraphics[trim=75 25 60 40, clip, width=\textwidth]{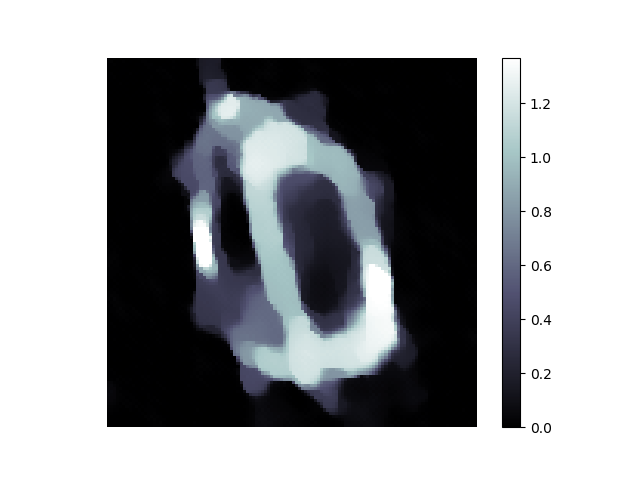}     
     \vskip-0.25\baselineskip
   \end{minipage}%
\par\medskip      
\begin{minipage}[t]{0.2\textwidth}%
     \centering
     \includegraphics[trim=75 25 60 40, clip, width=\textwidth]{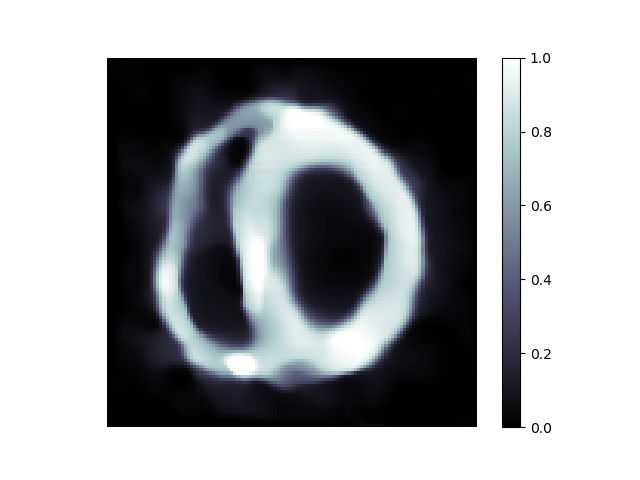}
     \vskip-0.25\baselineskip
   \end{minipage}%
   \hfill
   \begin{minipage}[t]{0.2\textwidth}%
     \centering
    \includegraphics[trim=75 25 60 40, clip, width=\textwidth]{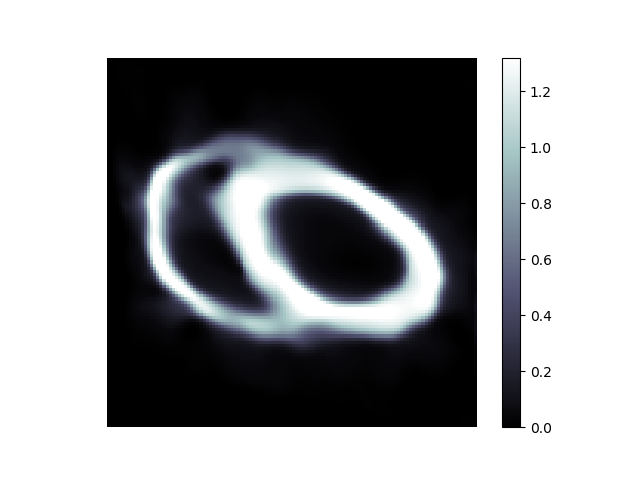}
     \vskip-0.25\baselineskip
   \end{minipage}%
   \hfill
   \begin{minipage}[t]{0.2\textwidth}%
     \centering
     \includegraphics[trim=75 25 60 40, clip, width=\textwidth]{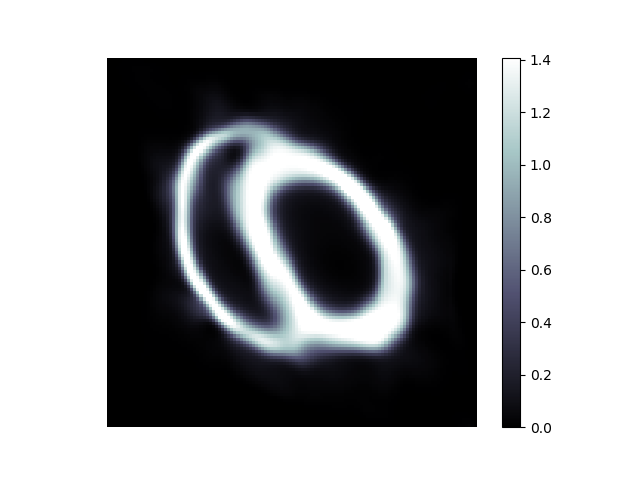}
     \vskip-0.25\baselineskip
   \end{minipage}%
      \hfill
   \begin{minipage}[t]{0.2\textwidth}%
     \centering
     \includegraphics[trim=75 25 60 40, clip, width=\textwidth]{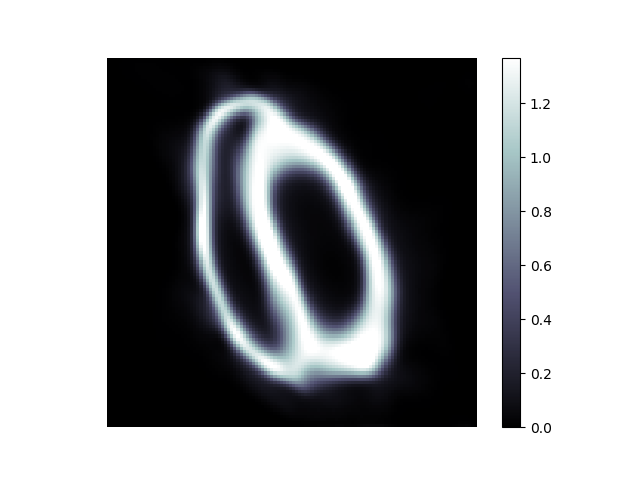}     
     \vskip-0.25\baselineskip
   \end{minipage}%
\par\medskip      
	\begin{minipage}[t]{0.2\textwidth}%
     \centering
     \includegraphics[trim=75 25 60 40, clip, width=\textwidth]{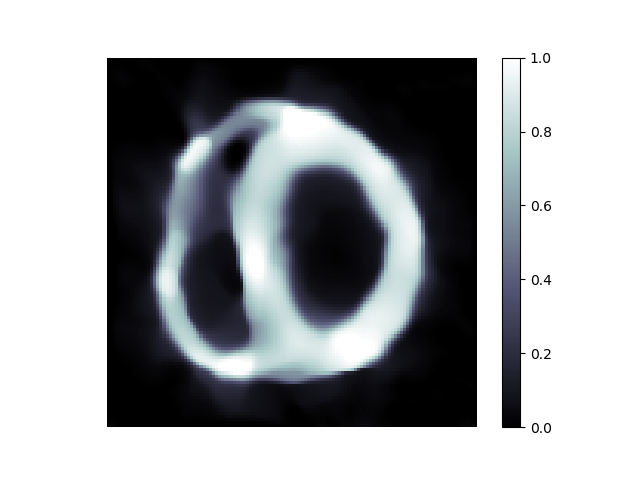}
     \vskip-0.25\baselineskip
   \end{minipage}%
   \hfill
   \begin{minipage}[t]{0.2\textwidth}%
     \centering
    \includegraphics[trim=75 25 60 40, clip, width=\textwidth]{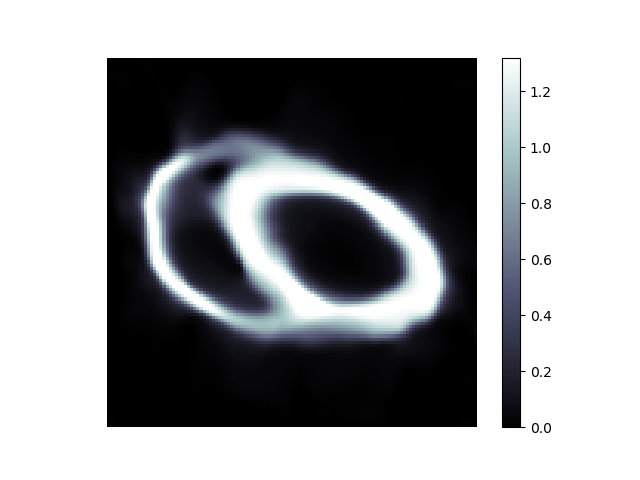}
     \vskip-0.25\baselineskip
   \end{minipage}%
   \hfill
   \begin{minipage}[t]{0.2\textwidth}%
     \centering
     \includegraphics[trim=75 25 60 40, clip, width=\textwidth]{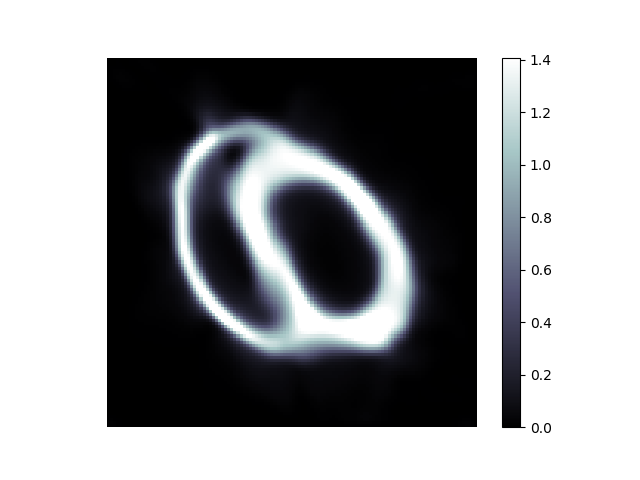}
     \vskip-0.25\baselineskip
   \end{minipage}%
      \hfill
   \begin{minipage}[t]{0.2\textwidth}%
     \centering
     \includegraphics[trim=75 25 60 40, clip, width=\textwidth]{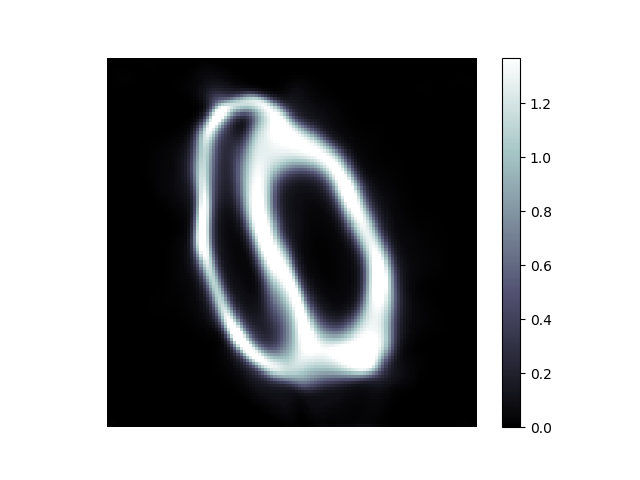}     
     \vskip-0.25\baselineskip
   \end{minipage}%
\par\medskip      
\begin{minipage}[t]{0.2\textwidth}%
     \centering
     \includegraphics[trim=75 25 60 40, clip, width=\textwidth]{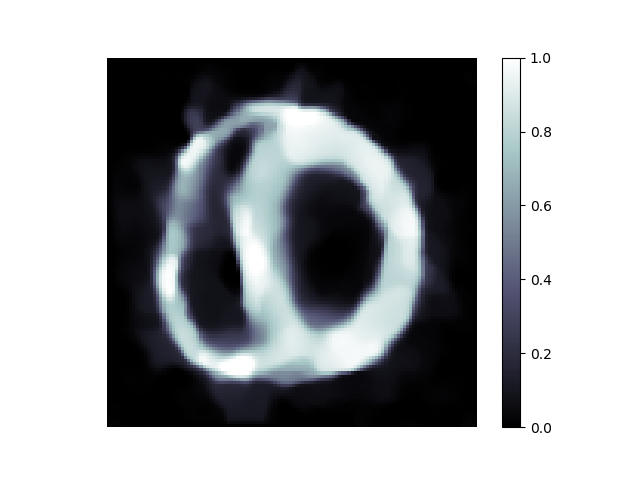}
     \vskip-0.25\baselineskip
   \end{minipage}%
   \hfill
   \begin{minipage}[t]{0.2\textwidth}%
     \centering
    \includegraphics[trim=75 25 60 40, clip, width=\textwidth]{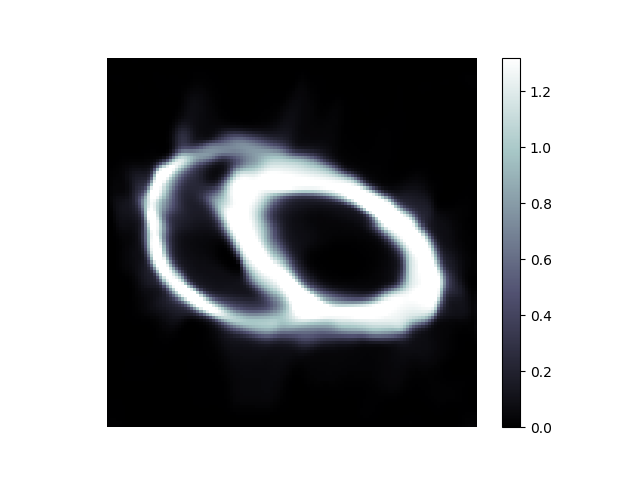}
     \vskip-0.25\baselineskip
   \end{minipage}%
   \hfill
   \begin{minipage}[t]{0.2\textwidth}%
     \centering
     \includegraphics[trim=75 25 60 40, clip, width=\textwidth]{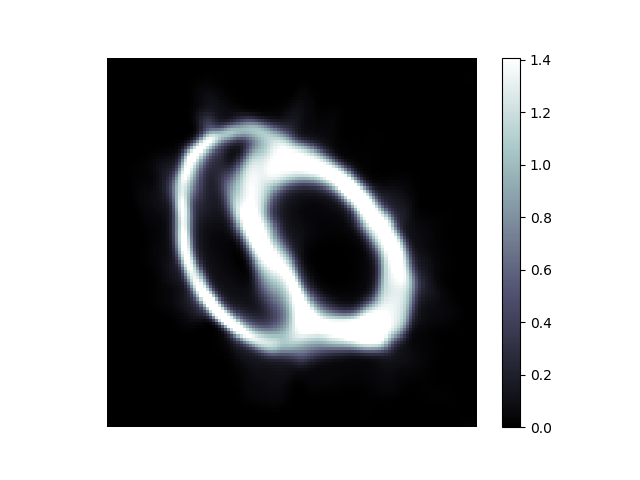}
     \vskip-0.25\baselineskip
   \end{minipage}%
      \hfill
   \begin{minipage}[t]{0.2\textwidth}%
     \centering
     \includegraphics[trim=75 25 60 40, clip, width=\textwidth]{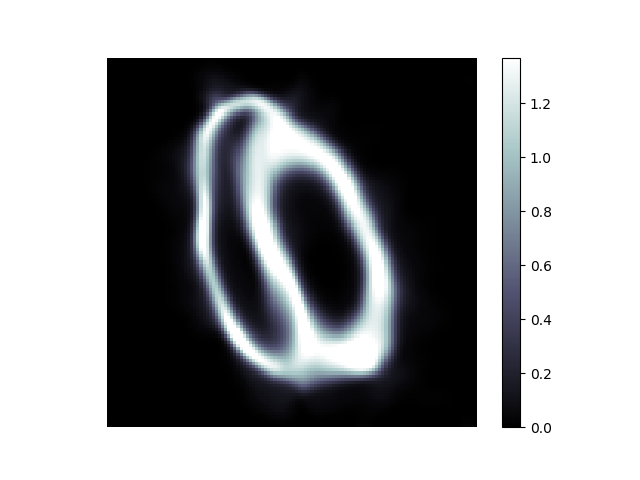}     
     \vskip-0.25\baselineskip
   \end{minipage}%
\par\medskip   
\begin{minipage}[t]{0.2\textwidth}%
     \centering
     \includegraphics[trim=75 25 60 40, clip, width=\textwidth]{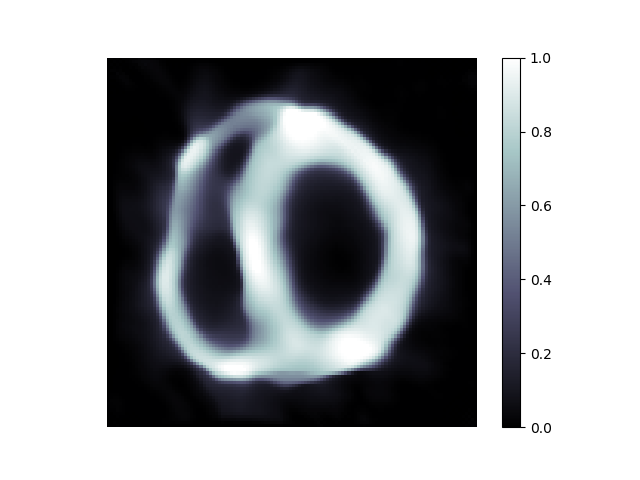}
     \vskip-0.25\baselineskip
   \end{minipage}%
   \hfill
   \begin{minipage}[t]{0.2\textwidth}%
     \centering
    \includegraphics[trim=75 25 60 40, clip, width=\textwidth]{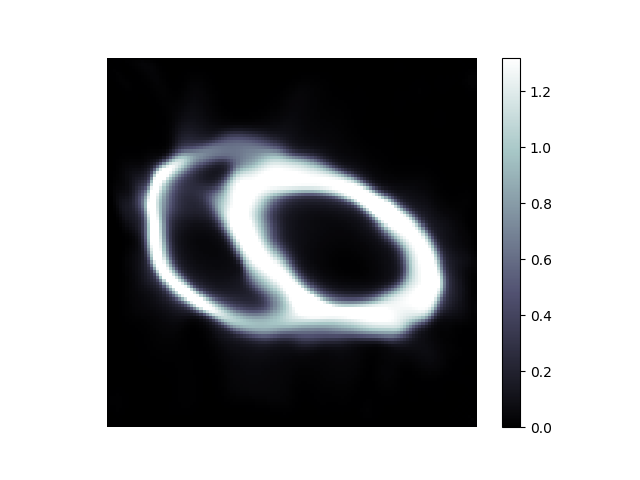}
     \vskip-0.25\baselineskip
   \end{minipage}%
   \hfill
   \begin{minipage}[t]{0.2\textwidth}%
     \centering
     \includegraphics[trim=75 25 60 40, clip, width=\textwidth]{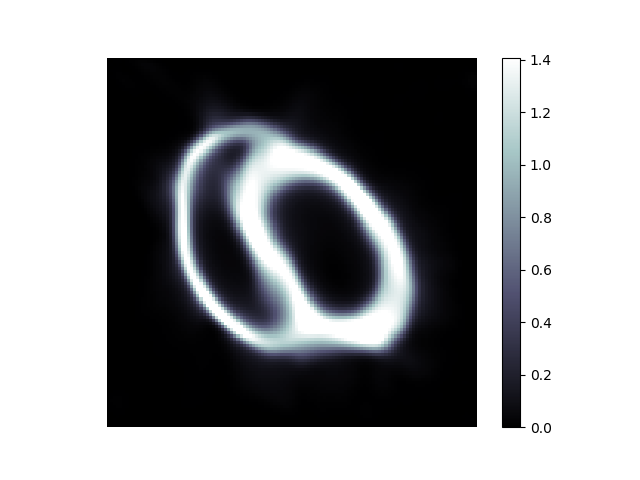}
     \vskip-0.25\baselineskip
   \end{minipage}%
      \hfill
   \begin{minipage}[t]{0.2\textwidth}%
     \centering
     \includegraphics[trim=75 25 60 40, clip, width=\textwidth]{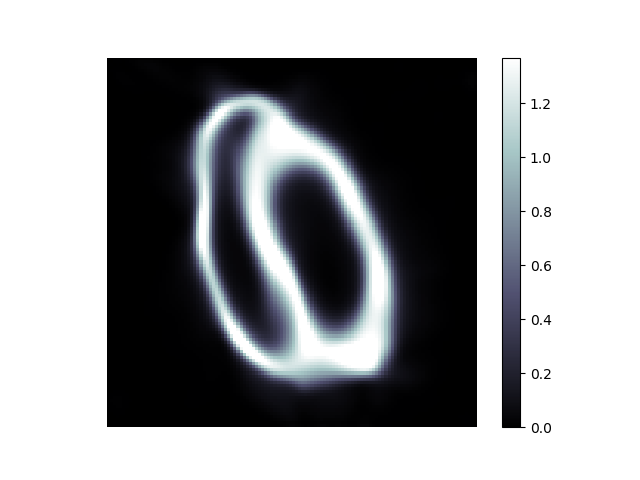}     
     \vskip-0.25\baselineskip
   \end{minipage}%
\par\medskip   
   \begin{minipage}[t]{0.2\textwidth}%
     \centering
     \includegraphics[trim=75 25 60 40, clip, width=\textwidth]{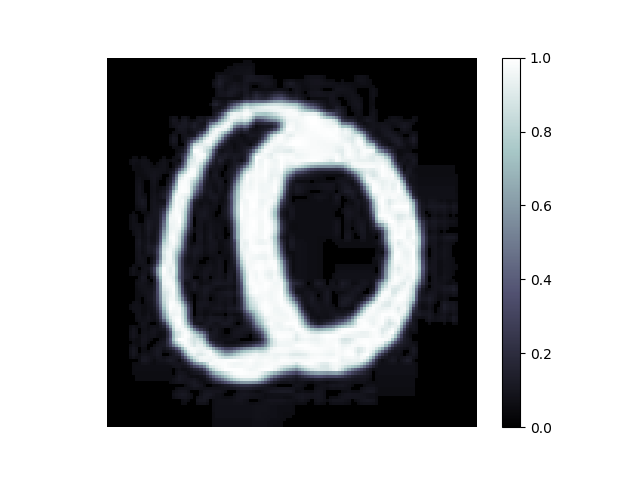}
     \vskip-0.25\baselineskip
     Gate 1
   \end{minipage}%
   \hfill
   \begin{minipage}[t]{0.2\textwidth}%
     \centering
    \includegraphics[trim=75 25 60 40, clip, width=\textwidth]{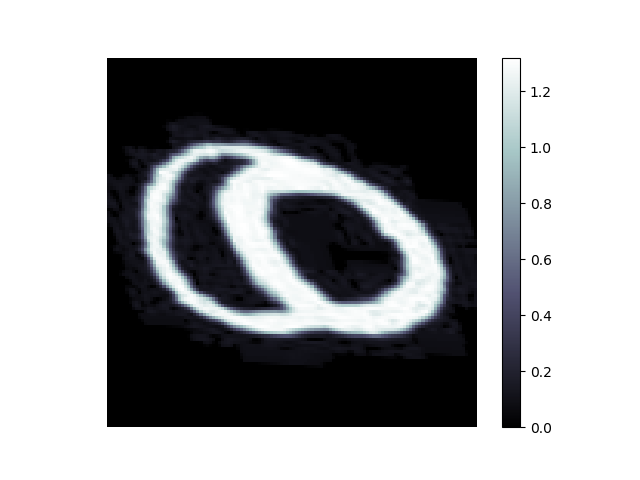}
     \vskip-0.25\baselineskip
     Gate 2
   \end{minipage}%
   \hfill
   \begin{minipage}[t]{0.2\textwidth}%
     \centering
     \includegraphics[trim=75 25 60 40, clip, width=\textwidth]{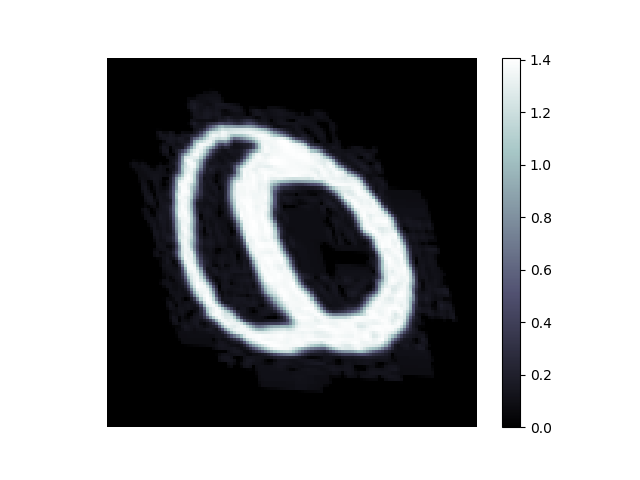}
     \vskip-0.25\baselineskip
     Gate 3
   \end{minipage}%
      \hfill
   \begin{minipage}[t]{0.2\textwidth}%
     \centering
     \includegraphics[trim=75 25 60 40, clip, width=\textwidth]{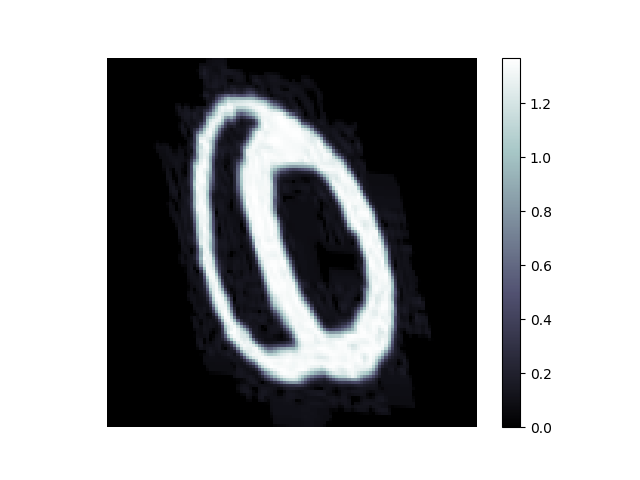}     
     \vskip-0.25\baselineskip
     Gate 4
   \end{minipage}%
\caption{Test suite 3. Reconstructed spatiotemporal images by selecting different regularization parameters. The columns represent the four gates. The first row shows  the reconstructed images by \ac{TV}-based method for each gate with $\mu_1 = 0.02$. The rows 2--5 respectively show the reconstructed spatiotemporal images by the proposed method with parameter pairs ($\mu_1$, $\mu_2$, $\sigma$) chosen as $(0.02, 10^{-6}, 1.25)$, $(0.02, 10^{-6}, 1.0)$, $(0.01, 10^{-7}, 1.0)$, $(0.03, 10^{-7}, 1.0)$. The last row shows the ground truth of each gate.}
\label{Test_suite_3:heart_phantom}
\end{figure}

Furthermore, the reconstruction results are quantitatively compared to the corresponding ground truths 
by using \ac{SSIM}, \ac{PSNR} and \ac{NRMSE}. These calculated indexes are listed in \cref{Test_suite_3:Sensitivity_table}. 
As given in the table, the corresponding \ac{SSIM} and \ac{PSNR} indexes of the proposed method are 
relatively larger than those obtained by \ac{TV}-based method, and the indexes of \ac{NRMSE} are smaller.  
Remark that the obtained indexes by the proposed method are quite similar with each other even if the  
different parameter pairs are selected. 
\begin{table}[htbp]
\centering
\begin{tabular}{c | r r r r}
&  \multicolumn{1}{c}{Gate 1} &  \multicolumn{1}{c}{Gate 2} & \multicolumn{1}{c}{Gate 3} 
 &  \multicolumn{1}{c}{Gate 4}   \\ 
\hline                                   
 \multirow{3}{*}{\ac{TV}} & 0.6403        &  0.7054     &  0.6731       &  0.6671  \\
   									& 16.81           &  18.53        &  17.65         &  16.60   \\ 
   									& 0.3136        & 0.2964      &  0.3388      & 0.3773 \\
\hline                                   
 \multirow{3}{*}{Proposed}  &  0.7603      &  0.7692   &  0.8102     &  0.8182   \\
   									 &  20.18         &  21.95     &  23.15        &  23.01   \\ 
   									 &  0.2127       &  0.2001   &  0.1798     &  0.1803 \\
\hline
\multirow{3}{*}{Proposed}  &  0.7612   &  0.7525   &  0.7941   &  0.8083    \\  
  									&   20.00    &  21.70     &  22.47    &  22.43   \\
 									&  0.2170  & 0.2058    & 0.1945    & 0.1927  \\
\hline                                  
\multirow{3}{*}{Proposed}   &  0.7485       &  0.7563   &  0.7998   &  0.8085    \\  
   									 & 19.83           &  21.89      &  22.45    &  22.20   \\
   									 & 0.2215        & 0.2014     & 0.1949    & 0.1979 \\
\hline                                   
\multirow{3}{*}{Proposed}   	&  0.7502       &  0.7560   &  0.7947   &  0.8021   \\ 
 										& 19.63          &  21.80     &  22.21      &  22.03  \\
 										& 0.2265       &  0.2034   & 0.2003    & 0.2019  \\
     \hline
\end{tabular}
\caption{Test suite 3. The values of \ac{SSIM}, \ac{PSNR} and \ac{NRMSE} of the reconstructed spatiotemporal images compared to the related ground truths for varying values of the regularization parameters $\mu_1$, $\mu_2$, and the kernel width $\sigma$, see \cref{Test_suite_3:heart_phantom} for the detailed images. Each entry has three  values, where the upper is the value of \ac{SSIM}, the middle is the value of \ac{PSNR}, and the bottom is the value of \ac{NRMSE}, which corresponds to the image on the counterpart position in \cref{Test_suite_3:heart_phantom}. Row 1: \ac{TV}-based regularization method, and rows 2--5: The proposed method with different selecting parameters. }
\label{Test_suite_3:Sensitivity_table}
\end{table}

As visual and quantitative comparisons by \cref{Test_suite_3:heart_phantom} and \cref{Test_suite_3:Sensitivity_table}, 
the proposed method is not so sensitive against the selection of the regularization parameters to some extent. However, those values are 
selected too big or too small, which would cause inappropriate regularized results.

\section{Discussion}\label{sec:Discussion} 

Here we further discuss several important issues about the model and related algorithm. 

\subsection{An alternative model}
\label{sec:Alternative_model}

As stated in the beginning of \cref{sec:OT_recon_model}, one method to ensure a Hilbert space 
being admissible is using the differential operator $\diffoperator$. 
For instance, the presented model with \ac{ODE} constraint in \cite{ChGrOz19} can be written as 
\begin{equation}\label{eq:VarReg_LDDMM_previous}
\begin{split}
 &\min_{\substack{\template \in \RecSpace \\ \velocityfield \in \Xspace{2}}} \int_{0}^{1} \left[\DataDisc_{\ForwardOp_t, \data_t}\bigl(\diffeo_{0,t}^{\velocityfield} . \template\bigr)  + 
   \mu_2 \int_0^t \int_{\Omega}\vert\diffoperator\velocityfield(\tau, x)\vert^2\dint x\dint \tau \right] \dint t   + \mu_1 \RegFunc_1(\template)  \\
 & \quad\,\, \text{s.t.  $\diffeo_{0,t}^{\velocityfield}$ solves \ac{ODE} \cref{eq:FlowEq},}
  \end{split}
\end{equation}   
where \cref{eq:LDDMMPDEConstrained_mp_19} is the equivalent \ac{PDE}-constrained optimal control formulation. 

Inspired by the proposed model \cref{eq:VarReg_LDDMM_2} and the model \cref{eq:VarReg_LDDMM_previous}, 
an alternative model using the differential operator $\diffoperator$ is formulated as 
\begin{equation}\label{eq:VarReg_LDDMM_diffoperator}
\begin{split}
 &\min_{\substack{\template \in \RecSpace \\ \velocityfield \in \Xspace{2}}} \int_{0}^{1} \left[\DataDisc_{\ForwardOp_t, \data_t}\bigl(\diffeo_{0,t}^{\velocityfield} . \template\bigr)  + 
   \mu_2 \int_0^t \int_{\Omega}\diffeo_{0,\tau}^{\velocityfield} . \template(x)\vert\diffoperator\velocityfield(\tau, x)\vert^2\dint x\dint \tau \right] \dint t   + \mu_1 \RegFunc_1(\template)  \\
 & \quad\,\, \text{s.t.  $\diffeo_{0,t}^{\velocityfield}$ solves \ac{ODE} \cref{eq:FlowEq}.}
  \end{split}
\end{equation}   
It is easy to obtain its equivalent \ac{PDE}-constrained optimal control formulation.  
Furthermore, the time-discretized versions of \cref{eq:VarReg_LDDMM_diffoperator} can be 
readily obtained following \cref{sec:time_discretized_version}. 

Compared with \cref{eq:VarReg_LDDMM_previous}, the unknown time-dependent image $\diffeo_{0,\tau}^{\velocityfield} . \template$ 
acting as the weight function is introduced into the shape regularization of \cref{eq:VarReg_LDDMM_diffoperator}.  
In contrast to \cref{eq:VarReg_LDDMM_2}, the differential operator $\diffoperator$ is explicitly used  
to construct that shape regularization. But these modifications would make the alternative model 
harder to solve. Through comparing with the models \cref{eq:VarReg_LDDMM_2}, \cref{eq:VarReg_LDDMM_previous} 
and \cref{eq:VarReg_LDDMM_diffoperator}, the relationship between \ac{LDDMM} and optimal transportation becomes more clear.

\subsection{Algorithmic initial values}
\label{sec:construct_initial_value}

Since the proposed model is nonlinear and noncovex due to the composites of the template and 
diffeomorphic deformations (generated by the velocity field), the selection of algorithmic initial value 
has important influence on the final result. 

During the implementation, it makes sense that the initial velocity field is always chosen as zero, 
and the resulting initial deformations are the identity deformation. That is because the optimal 
deformation is hopefully close to the identity deformation, which is characterized by 
the shape regularization in \cref{eq:VarReg_LDDMM_2}. Next we focus on the selection of the initial template. 
As we have tested by several examples in \cref{sec:numerical_experiments}, the selection of the initial template 
depends mainly on the degree of deformations of the ground truths at different gates. For instance in test suite 1, 
the deformation degree is relatively small, so we apply \cref{algo:GDSB_4DCT} to obtain an initial template 
after dozens of iterations by using all of the data with fixed zero velocity field. This means we treat the 
spatiotemporal reconstruction as a static one, and then use \ac{TV}-regularization method to reconstruct. 
Even though the resulting initial template is blurring, it looks like the ground truth at the first gate and can be 
act as an appropriate initial value. On the other hand, if the deformation degree is relatively large as in test suite 3, the method 
above does not work because it would result in an initial template quite dissimilar as the ground truth at the first gate. 
Hence one alternative method is to employ \cref{algo:GDSB_4DCT} to gain an initial template by the projection data 
only at the first gate by sufficient iterations, then apply \cref{algo:GradientDescentAlgorithmForFiniteFunctional_0} to 
obtain an better initial velocity field by enough iterations based on the initial template above and initially zero velocity field. 
Finally, the applicable initial template and initial velocity field are obtained for the proposed algorithm.

\subsection{The factor of discretized time degree}
\label{sec:factor}

Another issue is the setting of the factor $M$ of discretized time degree. As stated in \cref{sec:Discretization}, this factor 
determines the discretized degree of each subinterval $[t_i, t_{i+1}]$ for $0 \leq i \leq N-1$.  Setting $M=1$ means that 
the discretized time grid is coincident with the gating grid. For this case, the deformation of the images between 
adjacent gates is characterized by the linear displacement field from the view of numerical discretization. Besides that, the discretized time 
grid is finer than the gating grid by letting $M > 1$, which results in the deformation composited by multiple 
linear displacement fields (the number is $M$.) numerically. 

As we have tested in \cref{sec:numerical_experiments}, the larger deformation or motion between 
the adjacent images, the lager $M$ should be chosen. For example, we set $M = 2$ in test suites 1 and 2, and let $M = 8$ in test suite 3. 
That is because the deformation degree of the latter is larger than the former. However, we further found if the value of the factor has been 
set to be sufficiently large, using a larger one again would have no notable improvement for the ultimately results.  
In addition, the different subintervals of gating grid can be discretized adaptively according to the variability of motions.

\subsection{Extended models}
\label{sec:models_extension}

Inspired by the proposed model, we come up with several potential models also based on diffeomorphic optimal transportation. 

\paragraph{Image registration} Given the template image $I_0 \colon \domain \to \Real$ 
and the target image $I_1\colon \domain \to \Real$. Assume that 
they are both nonnegative and have the same mass. Using \cref{thm:EquivalenceWD_flow}, the variational model for image registration can be formulated as 
\begin{equation}\label{eq:VarReg_time_discrete_extension_registration}
\begin{split}
 & \min_{\velocityfield \in \Xspace{2}}\|\diffeo_{0,1}^{\velocityfield} . I_0 - I_1\|_2^2  + 
   \mu \int_0^1 \int_{\Omega}\diffeo_{0,t}^{\velocityfield} . I_0(x)\vert\velocityfield(t, x)\vert^2\dint x\dint t   \\
 &\quad\,\,  \text{s.t.  $\diffeo_{0,t}^{\velocityfield}$ solves \ac{ODE} \cref{eq:FlowEq},}
  \end{split}
  \end{equation}
  where the $\mu$ is the positive regularization parameter. 
  
\paragraph{Sequential image registration} Given the the time-series image $I_{t_i} \colon \domain \to \Real$ 
for $0 \leq t_i \leq 1$ and $0 \leq i \leq N$. Suppose that 
they are all nonnegative and have the same mass. The variational model for sequential image registration can be presented as 
\begin{equation}\label{eq:VarReg_time_discrete_extension_Sequential}
\begin{split}
 & \min_{\velocityfield \in \Xspace{2}}\frac{1}{N}\sum_{i=1}^{N} \left[\|\diffeo_{0,t_i}^{\velocityfield} . I_{t_0} - I_{t_i}\|_2^2  + 
   \mu \int_0^{t_i} \int_{\Omega}\diffeo_{0,\tau}^{\velocityfield} . I_{t_0}(x)\vert\velocityfield(\tau, x)\vert^2\dint x\dint \tau \right]  \\
 &\quad\,\,  \text{s.t.  $\diffeo_{0,t}^{\velocityfield}$ solves \ac{ODE} \cref{eq:FlowEq}.}
  \end{split}
  \end{equation}
The model \cref{eq:VarReg_time_discrete_extension_Sequential} merely gives the time-discretized version for sequential image registration. 
The time-continuous version can be obtained naturally.   
  
\paragraph{Indirect image registration}  Assume that the template image $I_0 \colon \domain \to \Real$ is given,  
and the indirect measurement $\data_1$ is obtained from the target image. Assume that 
both of the images are nonnegative and have the same mass. The variational model for indirect image registration can be formulated as 
\begin{equation}\label{eq:VarReg_time_discrete_Indirect_registration}
\begin{split}
 & \min_{\velocityfield \in \Xspace{2}}\|\ForwardOp\bigl(\diffeo_{0,1}^{\velocityfield} . I_0\bigr) - \data_1\|_2^2  + 
   \mu \int_0^{1} \int_{\Omega}\diffeo_{0,t}^{\velocityfield} . I_0(x)\vert\velocityfield(t, x)\vert^2\dint x\dint t   \\
 &\quad\,\,  \text{s.t.  $\diffeo_{0,t}^{\velocityfield}$ solves \ac{ODE} \cref{eq:FlowEq}.}
  \end{split}
  \end{equation}
Moreover, the case for sequentially indirect image registration has been already  
proposed in \cref{eq:VarReg_LDDMM_deformation_time_discrete2}. Correspondingly, the time-continuous cases can be naturally achieved. 

Note that the data fitting terms above can be modified according to the practical requirements. From the numerical point of view, 
the proposed algorithm in \cref{algo:GradientDescentAlgorithmForFiniteFunctional_0} can be simply adapted to solve the 
extended models \cref{eq:VarReg_time_discrete_extension_registration}--\cref{eq:VarReg_time_discrete_Indirect_registration}. Therefore, 
if we consider the models with \ac{ODE}-constrained formulation under appropriate conditions, the proposed algorithm provides 
a new scheme to solve the models based on $\LpSpace^2$ Wasserstein distance in Benamou--Brenier formulation.

\section{Conclusion}\label{sec:Conclusions}

In this work, the $\LpSpace^2$ Wasserstein distance in Benamou--Brenier formulation is used to characterize the optimal transport cost, 
and the unknown velocity field is restricted onto the admissible Hilbert space, which results in a diffeomorphic optimal transportation 
among the mass-preserving image flows. Along the general framework for spatiotemporal imaging that presented in \cite{ChGrOz19}, 
a joint variational model has been investigated for the spatiotemporal image reconstruction with diffeomorphic and mass-preserving property. 
Therefore, the proposed model is a production of combining the Wasserstein distance of optimal transportation and the 
flow of diffeomorphisms involved in \ac{LDDMM}, which is suitable for the scenario of spatiotemporal imaging with large diffeomorphic and 
mass-preserving deformations. 

Additionally, the equivalent \ac{PDE}-constrained optimal control formulation is obtained from the proposed model 
with \ac{ODE} constraint. Using the \ac{PDE}-constrained form, the proposed model has been theoretically compared 
against the existing joint variational model based on Wasserstein distance in \cite{Br10}, which demonstrates that the former     
can generate the sufficiently smooth velocity field, and further guarantee the flow of large non-rigid diffeomorphic deformations. 
And the optimal velocity field of the former is unnecessarily vanishing on the supports of the reconstructed images at the 
end time points, which implies the consistency between the time-continuous version and its associated time-discretized one. 
But those are not the cases for the model in \cite{Br10} under appropriate conditions. Furthermore, the comparison is also 
performed for the proposed model in \ac{ODE}-constrained form and the one 
based on the \ac{LDDMM} consistent growth model proposed in our previous work \cite{ChGrOz19}, 
which makes clear the relationship between the optimal transportation and \ac{LDDMM}. 

The time-discretized versions with/without the `virtual' template of the proposed model 
have also been presented, which are solved by the alternating minimization algorithm. 
Particularly, an alternating gradient descent algorithm was designed to solve the time-discretized 
proposed model with the `virtual' template, where the most calculations only 
involve the easy-to-implement linearized deformations. Considering the 
gained \ac{ODE}-constrained form under appropriate conditions, this algorithm provides a new idea to solve 
the other models based on $\LpSpace^2$ Wasserstein distance in Benamou--Brenier formulation. 

The performance of the proposed model and associated algorithm is finally validated by several numerical experiments in 2D space and time 
tomography with sparse-view and/or different noise level data measured from topology-preserving and mass-preserving sequential images. 
Using the noise-free and sparse-view projection data, we evaluated the overview performance of the proposed method, including 
numerical convergence, reconstructed image quality, and mass-preserving property. The numerical results have showed the desirable 
performance with respect to those aspects. In particular, the proposed method has much better reconstruction accuracy than the $\LpSpace^2$-gradient 
descent scheme and the \ac{TV}-regularization method from both the visual and quantitative perspectives. And the proposed method 
generated smooth optimal velocity field, but that is not the case of the $\LpSpace^2$-gradient descent scheme. 

Moreover, we have tested the robustness against the different noise levels for the proposed method. Even if the projection data is disturbed by  
different noise levels, the proposed method can always track the motions of the objects and reconstruct more accurate sequential 
images. Through the numerical validation, we also found that the proposed method is not so sensitive against the selection of the related 
regularization parameters. Conclusively, the proposed method can stably improve the quality of the reconstructed images in 
spatiotemporal imaging with large diffeomorphic and mass-preserving deformations. 

We further made a lot of important discussions about the proposed model and algorithms. Particularly, an alternative model was 
proposed for spatiotemporal imaging, which is also coupled the thoughts of \ac{LDDMM} and optimal transportation. Although this alternative 
is more complicated than the proposed one, it might have some potential application. Illustrating with the different numerical tests, we also 
analyzed the selections of algorithmic initial value and the factor of discretized time degree, which provides the guidelines for the 
numerical implementation of the proposed model. Inspired by the proposed model, we came up with several extended models with applications to  
more image processing and biomedical imaging. Importantly, the proposed algorithm provides 
a new scheme to solve the models based on $\LpSpace^2$ Wasserstein distance in Benamou--Brenier formulation. 

We are going to concern the more theoretical aspects of the proposed model and algorithms, 
the further extensions following the studied framework, and the applications and related theory to more spatiotemporal 
biomedical imaging and image processing.

\appendix
\section{Optimality conditions}
\label{sec:Optimality_conditions}

The optimality conditions for \cref{eq:VarReg_LDDMM_2} and \cref{eq:time_discretized_VarReg_LDDMM_2}.  Let us begin with the following lemma. 

\begin{lemma}\label{lem:DiffEvolutionOperator_tt}
Let $\velocityfield,\velocityfieldother \in \Xspace{2}$, and $\diffeo_{0,t}^{\velocityfield}$ denote the solution to 
the \ac{ODE} in \cref{eq:FlowEq} with given $\velocityfield$, and $\template \in \LpSpace^2(\domain,\Real)$ be differentiable. 
Using the mass-preserving deformation in \cref{eq:MassPreservedDeform}, then 
\begin{equation}\label{eq:DiffEvolutionOperator_tt}
  \frac{d}{d \epsilon} \bigl(\diffeo_{0,t}^{\velocityfield +  \epsilon\velocityfieldother} . \template\bigr)(x)  \Bigl\vert_{\epsilon=0}  = \bigl\vert \Diff\bigl(\diffeo_{t,0}^{\velocityfield}\bigr)(x) \bigr\vert \Div\bigl( \template\,h_{t, 0}^{\velocityfield}\bigr) \circ \diffeo_{t,0}^{\velocityfield}(x)                
\end{equation}  
for $x \in \domain$ and $0 \leq t \leq 1$, where 
\begin{equation}
   \label{eq:midfunction}
    h_{t, 0}^{\velocityfield}
    = - \int_{0}^{t} \Diff\bigl( \gelement{\tau,0}{\velocityfield} \bigr)\bigl( \gelement{0,\tau}{\velocityfield} \bigr) 
             \Bigl( \velocityfieldother\bigl(\tau, \gelement{0,\tau}{\velocityfield} \bigr) \Bigr) \dint \tau.
           \end{equation}
\end{lemma}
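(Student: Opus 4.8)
The plan is to reduce everything to one identity — that the Gâteaux variation of the inverse flow is $h_{t,0}^{\velocityfield}$ transported by $\diffeo_{t,0}^{\velocityfield}$ — and then to unfold the mass-preserving action by the product and chain rules. Write $\velocityfield^{\epsilon} := \velocityfield + \epsilon\velocityfieldother$ and $\xi_t^{\epsilon} := \diffeo_{t,0}^{\velocityfield^{\epsilon}} = (\diffeo_{0,t}^{\velocityfield^{\epsilon}})^{-1}$, so that by \cref{eq:MassPreservedDeform} the object to differentiate is $\diffeo_{0,t}^{\velocityfield^{\epsilon}} . \template = \bigl\vert\Diff\xi_t^{\epsilon}\bigr\vert\,\template\circ\xi_t^{\epsilon}$. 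Set $\delta\xi_t := \frac{d}{d\epsilon}\xi_t^{\epsilon}\big\vert_{\epsilon=0}$.

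First I would establish the variation of the forward flow. Writing $w_t(x) := \frac{d}{d\epsilon}\diffeo_{0,t}^{\velocityfield^{\epsilon}}(x)\big\vert_{\epsilon=0}$ and differentiating the ODE \cref{eq:FlowEq} in $\epsilon$ gives a linear (Duhamel) equation whose solution is the classical formula $w_t(x) = \int_0^t \Diff(\gelement{s,t}{\velocityfield})(\diffeo_{0,s}^{\velocityfield}(x))\,\velocityfieldother(s,\diffeo_{0,s}^{\velocityfield}(x))\,\dint s$, which may be quoted from the \ac{LDDMM} literature or checked directly by verifying it solves the linearized equation with zero initial datum. Differentiating the identity $\diffeo_{0,t}^{\velocityfield^{\epsilon}}\circ\xi_t^{\epsilon} = \Id$ in $\epsilon$ then yields $\delta\xi_t = -\Diff\xi_t\cdot(w_t\circ\xi_t)$, and a bookkeeping computation with the cocycle relations of \cref{eq:FlowRelation}--\cref{eq:diff_trans} (in particular $\gelement{s,0}{\velocityfield} = \diffeo_{t,0}^{\velocityfield}\circ\gelement{s,t}{\velocityfield}$ together with the chain rule for Jacobians) aligns the evaluation points and collapses the integrand, giving $\delta\xi_t = h_{t,0}^{\velocityfield}\circ\diffeo_{t,0}^{\velocityfield}$, which is exactly \cref{eq:midfunction}.

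With this in hand I would apply the product rule to $\bigl\vert\Diff\xi_t^{\epsilon}\bigr\vert\,\template\circ\xi_t^{\epsilon}$. Since the flow Jacobian is positive, Jacobi's formula gives $\frac{d}{d\epsilon}\bigl\vert\Diff\xi_t^{\epsilon}\bigr\vert\big\vert_{\epsilon=0} = \bigl\vert\Diff\xi_t\bigr\vert\,\mathrm{tr}\bigl((\Diff\xi_t)^{-1}\Diff\delta\xi_t\bigr)$, while the chain rule gives $\frac{d}{d\epsilon}(\template\circ\xi_t^{\epsilon})\big\vert_{\epsilon=0} = (\nabla\template\circ\xi_t)\cdot\delta\xi_t$. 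Substituting $\delta\xi_t = h_{t,0}^{\velocityfield}\circ\xi_t$, using the chain rule $\Diff\delta\xi_t = (\Diff h_{t,0}^{\velocityfield})\circ\xi_t\cdot\Diff\xi_t$ and conjugation-invariance of the trace turns the first term into $\bigl\vert\Diff\xi_t\bigr\vert\,\template(\xi_t)\,(\Div h_{t,0}^{\velocityfield})\circ\xi_t$; the two contributions then assemble into $\bigl\vert\Diff\xi_t\bigr\vert\,\bigl(\template\,\Div h_{t,0}^{\velocityfield} + \nabla\template\cdot h_{t,0}^{\velocityfield}\bigr)\circ\xi_t = \bigl\vert\Diff\xi_t\bigr\vert\,\Div(\template\,h_{t,0}^{\velocityfield})\circ\xi_t$, which is \cref{eq:DiffEvolutionOperator_tt}.

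I expect the main obstacle to be the second step: differentiating the inverse flow and matching it to $h_{t,0}^{\velocityfield}$. The delicate points are tracking which point each Jacobian is evaluated at and repeatedly using $\gelement{s,t}{\velocityfield}\circ\gelement{t,s}{\velocityfield} = \Id$ and the cocycle relations to rewrite $\Diff\xi_t\cdot\Diff(\gelement{s,t}{\velocityfield})(\gelement{t,s}{\velocityfield})$ as $\Diff(\diffeo_{s,0}^{\velocityfield})\circ\gelement{t,s}{\velocityfield}$; once the evaluation points are aligned, the final assembly through the trace and divergence identities is routine. Enough regularity to differentiate under the integral and to have $\Smooth^1$ flows is guaranteed by \cref{thm:FlowDiffeo}, as $\velocityfield,\velocityfieldother\in\Xspace{2}$.
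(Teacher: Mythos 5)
Your proposal is correct and follows essentially the same route as the paper's proof: both decompose the derivative into the variation of the inverse flow (which equals $h_{t,0}^{\velocityfield}\circ\diffeo_{t,0}^{\velocityfield}$) and the variation of the Jacobian determinant (which produces the $\Div(h_{t,0}^{\velocityfield})$ factor), and then assemble via the product rule and the identity $\template\,\Div h + \nabla\template\cdot h = \Div(\template\,h)$. The only difference is that the paper cites \cite[Lemma A.1]{ChGrOz19} and \cite[Theorem 8.3]{ChOz18} for those two sub-steps, whereas you rederive them inline via Duhamel's formula, implicit differentiation of $\diffeo_{0,t}^{\velocityfield^{\epsilon}}\circ\xi_t^{\epsilon}=\Id$, and Jacobi's formula — all of which check out.
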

\begin{proof}
By the mass-preserving deformation in \cref{eq:MassPreservedDeform}, and using \cref{eq:diff_trans}, we have 
\[
\frac{d}{d \epsilon} \bigl(\diffeo_{0,t}^{\velocityfield +  \epsilon\velocityfieldother} . \template\bigr)(x)  \Bigl\vert_{\epsilon=0}   = \frac{d}{d \epsilon}  \bigl\vert \Diff\bigl(\diffeo_{t,0}^{\velocityfield +  \epsilon\velocityfieldother}\bigr)(x) \bigr\vert \template \circ \diffeo_{t,0}^{\velocityfield +  \epsilon\velocityfieldother}(x) \Bigl\vert_{\epsilon=0}. 
\]
Using the result from \cite[Lemma A.1]{ChGrOz19}, then  
\[
    \frac{d}{d \epsilon} \gelement{t, 0}{\velocityfield + \epsilon\velocityfieldother} (x) \Bigl\vert_{\epsilon=0}
    = h_{t, 0}^{\velocityfield} \circ \diffeo_{t,0}^{\velocityfield}(x). 
 \]
Following the proof of \cite[Theorem 8.3]{ChOz18}, we have   
\[
\frac{d}{d \epsilon} \bigl\vert \Diff\bigl(\diffeo_{t,0}^{\velocityfield +  \epsilon\velocityfieldother}\bigr)(x) \bigr\vert  \Bigl\vert_{\epsilon=0}  = \bigl\vert \Diff\bigl(\diffeo_{t,0}^{\velocityfield}\bigr)(x) \bigr\vert \Div\bigl(h_{t, 0}^{\velocityfield}\bigr) \circ \diffeo_{t,0}^{\velocityfield}(x).
\]
By the chain rule we obtain the result of \cref{eq:DiffEvolutionOperator_tt}.
\end{proof}

Then the following result is obtained immediately. 

\begin{lemma}\label{lem:data_matching_derivative}
Let the assumptions in \cref{lem:DiffEvolutionOperator_tt} hold and 
$\DataDisc_{\ForwardOp_t, g_t}\colon \RecSpace \to \Real$ be defined as \cref{eq:LDDMM_match_short}. 
Assuming that $\DataDisc_{\ForwardOp_t, g_t}$ is differentiable. Then
\begin{equation}\label{eq:Energy_func_deformation}
  \frac{d}{d \epsilon} \DataDisc_{\ForwardOp_t, g_t} \bigl(\diffeo_{0,t}^{\velocityfield + \epsilon\velocityfieldother} . \template \bigr)\Bigl\vert_{\epsilon=0} 
    =  \int_{0}^{t}\Bigl\langle \gelement{0, \tau}{\velocityfield} . \template  \grad\Bigl(\partial\DataDisc_{\ForwardOp_t, g_t} \bigl(\diffeo_{0,t}^{\velocityfield} . \template\bigr)\bigl( \gelement{\tau,t}{\velocityfield}\bigr)\Bigr),  \velocityfieldother(\tau, \Cdot) \Bigr\rangle_{\LpSpace^2(\domain,\Real^n)} \dint \tau,
  \end{equation}  
where $\partial\DataDisc_{\ForwardOp_t, g_t}$ represents the gradient of $\DataDisc_{\ForwardOp_t, g_t}$.
\end{lemma}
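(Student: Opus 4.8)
The plan is to differentiate through the composition by the chain rule and then transport everything to fixed reference configurations by successive changes of variables. Since $\DataDisc_{\ForwardOp_t, g_t}$ is differentiable and the relevant pairing is the $\LpSpace^2(\domain,\Real)$ inner product, I would first write
\[
\frac{d}{d \epsilon} \DataDisc_{\ForwardOp_t, g_t}\bigl(\diffeo_{0,t}^{\velocityfield + \epsilon\velocityfieldother} . \template\bigr)\Big\vert_{\epsilon=0} = \Bigl\langle \partial\DataDisc_{\ForwardOp_t, g_t}\bigl(\diffeo_{0,t}^{\velocityfield} . \template\bigr),\ \frac{d}{d \epsilon}\bigl(\diffeo_{0,t}^{\velocityfield + \epsilon\velocityfieldother} . \template\bigr)\Big\vert_{\epsilon=0}\Bigr\rangle_{\LpSpace^2(\domain,\Real)},
\]
and then substitute the expression for the inner derivative furnished by \cref{lem:DiffEvolutionOperator_tt}, namely $\bigl\vert \Diff(\diffeo_{t,0}^{\velocityfield})\bigr\vert \Div(\template\, h_{t,0}^{\velocityfield}) \circ \diffeo_{t,0}^{\velocityfield}$, with $h_{t,0}^{\velocityfield}$ as in \cref{eq:midfunction}.

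Next I would eliminate the Jacobian factor and the outer composition by the time-$t$ change of variables $y = \diffeo_{t,0}^{\velocityfield}(x)$ (so that $x = \diffeo_{0,t}^{\velocityfield}(y)$ and $\dint y = \vert \Diff(\diffeo_{t,0}^{\velocityfield})\vert \dint x$), which collapses the integral to $\int_\domain \bigl(\partial\DataDisc_{\ForwardOp_t, g_t}\circ\diffeo_{0,t}^{\velocityfield}\bigr) \Div(\template\, h_{t,0}^{\velocityfield})\,\dint y$. Integration by parts (the boundary term vanishes because $\template$ is compactly supported in $\domain$) moves the divergence onto the matching gradient, producing $-\int_\domain \grad\bigl(\partial\DataDisc_{\ForwardOp_t, g_t}\circ\diffeo_{0,t}^{\velocityfield}\bigr)\cdot \template\, h_{t,0}^{\velocityfield}\,\dint y$. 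I would then insert the integral formula \cref{eq:midfunction} for $h_{t,0}^{\velocityfield}$ and apply Fubini to pull the $\tau$-integral to the front; the minus sign carried by $h_{t,0}^{\velocityfield}$ cancels the one from the integration by parts, matching the sign in \cref{eq:Energy_func_deformation}.

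The crux is the remaining time-$\tau$ change of variables $z = \diffeo_{0,\tau}^{\velocityfield}(y)$, which must simultaneously localize $\velocityfieldother(\tau,\Cdot)$ at its identity argument and reassemble the prefactor into $\gelement{0,\tau}{\velocityfield} . \template$. Here I would transpose the Jacobian in the Euclidean pairing via $a\cdot(Mb)=(M^{T}a)\cdot b$, recognize that $\vert\Diff(\diffeo_{\tau,0}^{\velocityfield})\vert\,\template\circ\diffeo_{\tau,0}^{\velocityfield}=\gelement{0,\tau}{\velocityfield} . \template$ by the mass-preserving rule \cref{eq:MassPreservedDeform}, and use the chain-rule identity $\grad\bigl((\partial\DataDisc_{\ForwardOp_t, g_t}\circ\diffeo_{0,t}^{\velocityfield})\circ\diffeo_{\tau,0}^{\velocityfield}\bigr)=\Diff(\diffeo_{\tau,0}^{\velocityfield})^{T}\,\grad(\partial\DataDisc_{\ForwardOp_t, g_t}\circ\diffeo_{0,t}^{\velocityfield})\circ\diffeo_{\tau,0}^{\velocityfield}$ together with $\gelement{\tau,t}{\velocityfield}=\diffeo_{0,t}^{\velocityfield}\circ\diffeo_{\tau,0}^{\velocityfield}$ to fold the transposed-Jacobian term exactly into $\grad\bigl(\partial\DataDisc_{\ForwardOp_t, g_t}(\diffeo_{0,t}^{\velocityfield} . \template)(\gelement{\tau,t}{\velocityfield})\bigr)$, yielding \cref{eq:Energy_func_deformation}. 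I expect this last step — keeping the transpose, the composition argument $\gelement{\tau,t}{\velocityfield}$, and the mass-preserving weight bookkeeping mutually consistent — to be the main obstacle; the two changes of variables and the integration by parts are otherwise routine.
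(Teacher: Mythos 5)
Your proposal is correct, and it is essentially the argument the paper presupposes: the paper gives no written proof of this lemma (it asserts the result is ``obtained immediately'' from \cref{lem:DiffEvolutionOperator_tt}), and your chain of steps --- chain rule, substitution of \cref{eq:DiffEvolutionOperator_tt}, the time-$t$ change of variables, integration by parts with cancellation of the sign carried by \cref{eq:midfunction}, Fubini, and the time-$\tau$ change of variables with the Jacobian transpose folding into $\grad\bigl(\partial\DataDisc_{\ForwardOp_t, g_t}(\diffeo_{0,t}^{\velocityfield}.\template)(\gelement{\tau,t}{\velocityfield})\bigr)$ via $\diffeo_{0,t}^{\velocityfield}\circ\gelement{\tau,0}{\velocityfield}=\gelement{\tau,t}{\velocityfield}$ and the mass-preserving identity --- is exactly the omitted computation, with all signs and compositions checking out.
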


In what follows we derive the optimality conditions for \cref{eq:VarReg_LDDMM_2}.
\begin{theorem}\label{thm:energy_functional_derivative_2}
Let the assumptions in \cref{lem:data_matching_derivative} hold and 
$\GoalFunctionalV_C \colon \RecSpace \times \Xspace{2} \to \Real$ denote the objective 
functional in \cref{eq:VarReg_LDDMM_2} of time-continuous version, i.e.,
\begin{equation}\label{eq:LDDMM_vector_part2}
\GoalFunctionalV_C(\template, \velocityfield) :=  \int_{0}^{1} \left[\DataDisc_{\ForwardOp_t, \data_t}\bigl(\diffeo_{0,t}^{\velocityfield} . \template\bigr)  + 
   \mu_2 \int_0^t \int_{\Omega}\diffeo_{0,\tau}^{\velocityfield} . \template(x)\vert\velocityfield(\tau, x)\vert^2\dint x\dint \tau \right] \dint t  + \mu_1 \RegFunc_1(\template).
\end{equation}
Assume that the regularization term $\RegFunc_1$ is differentiable, and $\Vspace$ is an \ac{RKHS} with a reproducing 
kernel $\kernel \colon \domain \times \domain \to \Matrix_{+}^{d \times d}$. 
Then the gradient (i.e., $\LpSpace^2$-gradient) with regard to the velocity field $\velocityfield$ of $\GoalFunctionalV_C(\template,\Cdot)$ is 
\begin{multline}\label{eq:Energy_functional_gradient_2_l2}
  \grad_{\velocityfield}\GoalFunctionalV_C(\template, \velocityfield)(t,\Cdot)  
    = \diffeo_{0,t}^{\velocityfield} . \template \int_{t}^{1} \grad\Bigl(\Bigl[\partial\DataDisc_{\ForwardOp_{\tau}, g_{\tau}} \bigl(\diffeo_{0,\tau}^{\velocityfield} . \template\bigr) + \mu_2 (1-\tau) \vert\velocityfield(\tau, \cdot)\vert^2\Bigr]\bigl( \gelement{t,\tau}{\velocityfield}\bigr)\Bigr) \dint \tau  \\
    + 2\mu_2(1-t)\diffeo_{0,t}^{\velocityfield} . \template\, \velocityfield(t,\Cdot) 
  \end{multline}
and the $\Xspace{2}$-gradient with regard to the 
velocity field $\velocityfield$ of $\GoalFunctionalV_C(\template,\Cdot)$ is 
\begin{multline}\label{eq:Energy_functional_gradient_2}
  \grad^{\,\Vspace}_{\velocityfield}\!\!\GoalFunctionalV_C(\template, \velocityfield)(t,\Cdot)  
    = \Koperator\biggl(\diffeo_{0,t}^{\velocityfield} . \template \int_{t}^{1} \grad\Bigl(\Bigl[\partial\DataDisc_{\ForwardOp_{\tau}, g_{\tau}} \bigl(\diffeo_{0,\tau}^{\velocityfield} . \template\bigr) + \mu_2 (1-\tau) \vert\velocityfield(\tau, \cdot)\vert^2\Bigr]\bigl( \gelement{t,\tau}{\velocityfield}\bigr)\Bigr) \dint \tau  \\
    + 2\mu_2(1-t)\diffeo_{0,t}^{\velocityfield} . \template\, \velocityfield(t,\Cdot) \biggr)
  \end{multline}
for $0\leq t \leq 1$ and where $\Koperator(\varphi) = \int_{\Omega} \kernel(\Cdot, y)\varphi(y) \dint y$.
Moreover, the gradient with regard to the template $\template$ of $\GoalFunctionalV_C(\Cdot,\velocityfield)$  is   
\begin{equation}\label{eq:Energy_functional_gradient_template_2}
  \grad_{\template}\GoalFunctionalV_C(\template, \velocityfield)  \\
= \int_0^1 \Bigl[\partial\DataDisc_{\ForwardOp_t, g_t} \bigl(\diffeo_{0,t}^{\velocityfield} . \template\bigr) + \mu_2 (1-t) \vert\diffoperator\velocityfield(t, \cdot)\vert^2\Bigr]\bigl( \gelement{0,t}{\velocityfield}\bigr) \dint t + \mu_1 \partial \RegFunc_1(\template), 
  \end{equation}
where $\partial\RegFunc_1$ denotes the gradient of $\RegFunc_1 \colon \RecSpace \to \Real$.
The optimality conditions for \cref{eq:VarReg_LDDMM_2} are formulated as 
\begin{equation}\label{eq:OptCond_2}
 \begin{cases} 
    \grad^{\,\Vspace}_{\velocityfield}\GoalFunctionalV_{C}(\template, \velocityfield) = 0, & \\[0.5em]
    \grad_{\template}\GoalFunctionalV_{C}(\template, \velocityfield) - \lambda = 0, & \\[0.5em]
    \lambda \geq 0, \quad \template \geq 0, \quad \lambda\template = 0, 
   \end{cases}
\end{equation}
where $\lambda$ denotes the Lagrange multiplier. 
\end{theorem}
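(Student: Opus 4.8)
The plan is to compute the Gâteaux derivatives of $\GoalFunctionalV_C$ separately in the velocity field $\velocityfield$ and in the template $\template$, read off the corresponding gradients from the resulting linear functionals, and then assemble the optimality system as the Karush--Kuhn--Tucker conditions of the nonnegativity-constrained problem \cref{eq:VarReg_LDDMM_2}. The two workhorses are \cref{lem:DiffEvolutionOperator_tt}, which differentiates the mass-preserving weight $\diffeo_{0,\tau}^{\velocityfield} . \template$ under perturbation of $\velocityfield$, and \cref{lem:data_matching_derivative}, which differentiates the data-fidelity term.

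First I would treat the velocity field. A preliminary simplification is to apply Fubini to the double time integral in the shape-regularization term: for any integrand $f$,
\[
\int_0^1 \int_0^t f(\tau)\,\dint\tau\,\dint t = \int_0^1 (1-\tau)\, f(\tau)\,\dint\tau,
\]
which already explains the weights $(1-\tau)$ and $(1-t)$ appearing in \cref{eq:Energy_functional_gradient_2_l2}--\cref{eq:Energy_functional_gradient_2}. I would then perturb $\velocityfield \mapsto \velocityfield + \epsilon\velocityfieldother$ and differentiate at $\epsilon=0$. The data-fidelity contribution is handled directly by \cref{lem:data_matching_derivative}. The regularization term depends on $\velocityfield$ through two channels: the weight $\diffeo_{0,\tau}^{\velocityfield} . \template$, whose derivative is supplied by \cref{lem:DiffEvolutionOperator_tt}, and the explicit factor $\vert\velocityfield(\tau,\Cdot)\vert^2$, which differentiates to $2\velocityfield(\tau,\Cdot)\cdot\velocityfieldother(\tau,\Cdot)$ and produces the final term of \cref{eq:Energy_functional_gradient_2_l2}.

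The main obstacle is the bookkeeping that turns the weight-derivative into the displayed form. The expression \cref{eq:DiffEvolutionOperator_tt} carries a $\Div(\template\, h_{t,0}^{\velocityfield})$ together with the flow-dependent $h_{t,0}^{\velocityfield}$ of \cref{eq:midfunction}, itself a time integral of $\velocityfieldother$ transported along the flow. I would move the divergence onto the remaining factor by integration by parts on $\Omega$ (using compact support to discard boundary terms), thereby converting $\Div$ into the gradient $\grad(\cdots)$ seen in the formulas; then I would interchange the $t$- and $\tau$-integrals once more and perform the change of variables induced by $\gelement{t,\tau}{\velocityfield}$, using the group law \cref{eq:FlowRelation} and the associated Jacobian identities, to transport every term back to the time slice $t$ where the test field $\velocityfieldother(t,\Cdot)$ lives. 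Collecting everything into a single pairing $\langle\,\cdot\,,\velocityfieldother\rangle_{\LpSpace^2}$ identifies the $\LpSpace^2$-gradient \cref{eq:Energy_functional_gradient_2_l2}. This transport-and-regroup step, mirroring the computation behind \cref{lem:data_matching_derivative} and \cite[Lemma A.1]{ChGrOz19}, is where all the care is required.

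Next I would pass from the $\LpSpace^2$-gradient to the $\Xspace{2}$-gradient. Since $\Vspace$ is an \ac{RKHS}, the identity $\langle \vfield,\vfieldother\rangle_{\LpSpace^2} = \langle \Koperator(\vfield),\vfieldother\rangle_{\Vspace}$ from \cref{re:remark1} shows that representing the same linear functional in the $\Xspace{2}$ inner product merely applies $\Koperator$ slicewise in time, giving \cref{eq:Energy_functional_gradient_2}. For the template gradient I would exploit that the mass-preserving action \cref{eq:MassPreservedDeform} is \emph{linear} in $\template$, so that $\frac{d}{d\epsilon}\bigl(\diffeo_{0,t}^{\velocityfield} . (\template+\epsilon\xi)\bigr)\vert_{\epsilon=0} = \diffeo_{0,t}^{\velocityfield} . \xi$; differentiating each term, changing variables by $\gelement{0,t}{\velocityfield}$ to bring the pairing back to template space, and adding $\mu_1\partial\RegFunc_1(\template)$ yields \cref{eq:Energy_functional_gradient_template_2}. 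Finally, because \cref{eq:VarReg_LDDMM_2} is unconstrained in $\velocityfield$ but subject to $\template\geq 0$, I would write the optimality system: stationarity $\grad^{\,\Vspace}_{\velocityfield}\GoalFunctionalV_C = 0$, the template stationarity $\grad_{\template}\GoalFunctionalV_C = \lambda$ with multiplier $\lambda$, and the complementarity $\lambda\geq 0$, $\template\geq 0$, $\lambda\template = 0$, which is exactly \cref{eq:OptCond_2}.
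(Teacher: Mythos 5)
Your proposal is correct and follows essentially the same route as the paper: Gâteaux differentiation of $\GoalFunctionalV_C$ using \cref{lem:DiffEvolutionOperator_tt} and \cref{lem:data_matching_derivative}, an order-of-integration swap to produce the $(1-\tau)$ and $(1-t)$ weights, the \ac{RKHS} identity $\langle \vfield,\vfieldother\rangle_{\LpSpace^2}=\langle\Koperator(\vfield),\vfieldother\rangle_{\Vspace}$ to pass to the $\Xspace{2}$-gradient, and the KKT system for the nonnegativity constraint. The only cosmetic differences are that you apply Fubini before rather than after differentiating, and you spell out the template-gradient and divergence-to-gradient steps that the paper delegates to the cited lemmas or omits as straightforward.
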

\begin{proof}
Applying the results in \cref{lem:DiffEvolutionOperator_tt} and \cref{lem:data_matching_derivative}, we immediately have  
\begin{multline*}
  \frac{d}{d \epsilon}  \GoalFunctionalV_C(\template, \velocityfield + \epsilon\velocityfieldother)  \Bigl\vert_{\epsilon=0}  
=  \int_{0}^{1} \int_{0}^{t}\Bigl\langle \diffeo_{0,\tau}^{\velocityfield} . \template \grad\Bigl(\partial\DataDisc_{\ForwardOp_t, g_t} \bigl(\diffeo_{0,t}^{\velocityfield} . \template\bigr)\bigl( \gelement{\tau,t}{\velocityfield}\bigr)\Bigr),  \velocityfieldother(\tau, \Cdot) \Bigr\rangle_{\LpSpace^2(\domain,\Real^n)} \dint \tau \dint t \\
    + \mu_2\int_{0}^{1} \int_{0}^{t}\int_{0}^{\tau}\Bigl\langle \diffeo_{0,\iota}^{\velocityfield} . \template \grad\Bigl(\vert\diffoperator\velocityfield(\tau, \cdot)\vert^2\bigl( \gelement{\iota,\tau}{\velocityfield}\bigr)\Bigr),  \velocityfieldother(\iota, \Cdot) \Bigr\rangle_{\LpSpace^2(\domain,\Real^n)} \dint \iota \dint \tau \dint t \\
    + 2\mu_2 \int_{0}^{1}\int_{0}^{t}\bigl\langle \diffeo_{0,\tau}^{\velocityfield} . \template\, \velocityfield(\tau,\Cdot),  \velocityfieldother(\tau, \Cdot)\bigr\rangle_{\LpSpace^2(\domain,\Real^n)}\dint \tau \dint t. 
  \end{multline*}  
Changing the order of integration in the above equation gives
 \begin{multline}\label{eq:Energy_functional_derivative_1}
  \frac{d}{d \epsilon}  \GoalFunctionalV_C(\template, \velocityfield + \epsilon\velocityfieldother)  \Bigl\vert_{\epsilon=0} 
   =  \int_{0}^{1} \Bigl\langle \diffeo_{0,\tau}^{\velocityfield} . \template \int_{\tau}^{1} \grad\Bigl(\partial\DataDisc_{\ForwardOp_t, g_t} \bigl(\diffeo_{0,t}^{\velocityfield} . \template\bigr)\bigl( \gelement{\tau,t}{\velocityfield}\bigr)\Bigr)  \dint t,  \velocityfieldother(\tau, \Cdot) \Bigr\rangle_{\LpSpace^2(\domain,\Real^n)}\dint \tau \\
+ \mu_2\int_{0}^{1}  \Bigl\langle \diffeo_{0,\tau}^{\velocityfield} . \template \int_{\tau}^{1} (1-t) \grad\Bigl(\vert\diffoperator\velocityfield(t, \cdot)\vert^2\bigl( \gelement{\tau,t}{\velocityfield}\bigr)\Bigr) \dint t,  \velocityfieldother(\tau, \Cdot) \Bigr\rangle_{\LpSpace^2(\domain,\Real^n)} \dint \tau\\
    + 2\mu_2 \int_{0}^{1}\bigl\langle (1-\tau)\diffeo_{0,\tau}^{\velocityfield} . \template\, \velocityfield(\tau,\Cdot),  \velocityfieldother(\tau, \Cdot)\bigr\rangle_{\LpSpace^2(\domain,\Real^n)} \dint \tau. 
     \end{multline}  
As $\Vspace$ is an \ac{RKHS} with a reproducing kernel represented 
by $\kernel \colon \domain \times \domain \to \Matrix_{+}^{n \times n}$, then 
\begin{equation}\label{eq:RKHS_L2}
 \langle \vfield, \vfieldother \rangle_{\LpSpace^2(\domain,\Real^n)} 
   = \biggl\langle \int_{\domain} \kernel(\Cdot, y) \vfield(y) \dint y, \vfieldother \biggr\rangle_{\Vspace} \quad\text{for $ \vfield, \vfieldother \in \Vspace$}.
\end{equation}
Using \cref{eq:Energy_functional_derivative_1} and \cref{eq:RKHS_L2}, we prove \cref{eq:Energy_functional_gradient_2}.
Moreover, it is simple to obtain the results of \cref{eq:Energy_functional_gradient_template_2} and \cref{eq:OptCond_2}. Therefore their proofs are omitted.  
\end{proof}

For simplicity, let us introduce the following notation 
\begin{align}\label{eq:h_t_ti_general}
&h_{\tau, t}^{\template, \velocityfield} := \begin{cases} 
    \partial\DataDisc_{\ForwardOp_t, g_t} \bigl(\diffeo_{0,t}^{\velocityfield} . \template\bigr)\bigl( \gelement{\tau,t}{\velocityfield}\bigr), \quad 0 \leq \tau \leq t \leq 1,    & \\[0.5em]
    0, \quad t < \tau, &  
   \end{cases} \\
\label{eq:eta_t_ti_general}
&\eta_{\tau, t}^{\velocityfield} := \begin{cases} 
    \int_{\tau}^{t}  \vert\velocityfield(\iota, \cdot)\vert^2\bigl( \gelement{\tau,\iota}{\velocityfield}\bigr) \dint \iota, \quad 0 \leq \tau \leq t \leq 1,    & \\[0.5em]
    0, \quad t < \tau, &  
   \end{cases} \\
\label{eq:Middle_func_deriv_2_general}
&\velocityfield_{\tau, t} := \begin{cases} 
   \velocityfield(\tau,\Cdot), \quad 0 \leq \tau \leq t \leq 1,    & \\[0.5em]
   0, \quad t < \tau. &  
   \end{cases}
\end{align}

\begin{theorem}\label{thm:energy_functional_time_discretized_derivative}
Let the assumptions in \cref{thm:energy_functional_derivative_2} hold and 
$\GoalFunctionalV_D \colon \RecSpace \times \Xspace{2} \to \Real$ denote the objective 
functional in \cref{eq:time_discretized_VarReg_LDDMM_2} of time-discretized version, i.e., 
\begin{equation}\label{eq:LDDMM_time_discretized}
\GoalFunctionalV_D(\template, \velocityfield) :=  \frac{1}{N}\sum_{i=1}^{N} \left[\DataDisc_{\ForwardOp_{t_i}, \data_{t_i}}\bigl(\diffeo_{0,t_i}^{\velocityfield} . \template \bigr)  + 
   \mu_2 \int_0^{t_i} \int_{\Omega}\diffeo_{0,\tau}^{\velocityfield} . \template(x)\vert\velocityfield(\tau, x)\vert^2\dint x\dint \tau \right]  + \mu_1 \RegFunc_1(\template).
\end{equation} 
The gradient of $\GoalFunctionalV_D$ with regard to the velocity field $\velocityfield$ is 
\begin{equation}\label{eq:Energy_functional_time_discretized_gradient_l2}
  \grad_{\velocityfield}\GoalFunctionalV_D(\template, \velocityfield)  
    =  \frac{1}{N}\sum_{\{i\geq 1 : t_i \geq t\}} \biggl( \gelement{0, t}{\velocityfield} . \template \Bigl[ \grad \bigl(h_{t, t_i}^{\template,\velocityfield}  + \mu_2 \eta_{t, t_i}^{\velocityfield}\bigr) 
    + 2\mu_2 \velocityfield_{t, t_i} \Bigr]  \biggr) 
  \end{equation}
and the $\Xspace{2}$-gradient of $\GoalFunctionalV_D$ with regard to the 
velocity field $\velocityfield$ is 
\begin{equation}\label{eq:Energy_functional_time_discretized_gradient}
  \grad^{\,\Vspace}_{\velocityfield}\!\!\GoalFunctionalV_D(\template, \velocityfield)  
    =  \frac{1}{N}\sum_{\{i\geq 1 : t_i \geq t\}} \Koperator\biggl( \gelement{0, t}{\velocityfield} . \template \Bigl[ \grad \bigl(h_{t, t_i}^{\template,\velocityfield}  + \mu_2 \eta_{t, t_i}^{\velocityfield}\bigr) 
    + 2\mu_2 \velocityfield_{t, t_i} \Bigr] \biggr). 
  \end{equation}
Moreover, the gradient of $\GoalFunctionalV_D$ with regard to the template $\template$ is   
\begin{equation}\label{eq:Energy_functional_time_discretized_gradient_template_2}
  \grad_{\template}\GoalFunctionalV_D(\template, \velocityfield)  \\ 
= \frac{1}{N}\sum_{i=1}^{N} \bigl(h_{0, t_i}^{\template, \velocityfield}  + \mu_2 \eta_{0, t_i}^{\velocityfield} \bigr) + \mu_1 \partial \RegFunc_1(\template). 
  \end{equation}
Consequently, the optimality conditions for \cref{eq:LDDMM_time_discretized} are formulated as 
\begin{equation}\label{eq:OptCond_2_time_discretized}
 \begin{cases} 
    \grad^{\,\Vspace}_{\velocityfield}\GoalFunctionalV_{D}(\template, \velocityfield) = 0, & \\[0.5em]
    \grad_{\template}\GoalFunctionalV_{D}(\template, \velocityfield) - \lambda = 0, & \\[0.5em]
    \lambda \geq 0, \quad \template \geq 0, \quad \lambda\template = 0, 
   \end{cases}
\end{equation}
where $\lambda$ denotes the Lagrange multiplier. 
\end{theorem}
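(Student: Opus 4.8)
The plan is to mirror the derivation of \cref{thm:energy_functional_derivative_2}, the one structural change being that the outer temporal integral $\int_{0}^{1}(\Cdot)\dint t$ defining the time-continuous functional $\GoalFunctionalV_C$ is replaced by the discrete average $\frac{1}{N}\sum_{i=1}^{N}(\Cdot)$ over the gating nodes $t_i$. First I would compute the Gateaux derivative $\frac{d}{d\epsilon}\GoalFunctionalV_D(\template,\velocityfield+\epsilon\velocityfieldother)\bigl\vert_{\epsilon=0}$ by differentiating each of the $N$ summands separately. For the $i$-th data matching term $\DataDisc_{\ForwardOp_{t_i},\data_{t_i}}(\diffeo_{0,t_i}^{\velocityfield}.\template)$ I apply \cref{lem:data_matching_derivative} verbatim, which produces an inner integral over $[0,t_i]$ whose integrand, once the abbreviation \cref{eq:h_t_ti_general} is used, is exactly $\bigl\langle \gelement{0,\tau}{\velocityfield}.\template\,\grad h_{\tau,t_i}^{\template,\velocityfield},\velocityfieldother(\tau,\Cdot)\bigr\rangle_{\LpSpace^2}$. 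For the $i$-th weight term $\mu_2\int_0^{t_i}\int_{\Omega}\diffeo_{0,\tau}^{\velocityfield}.\template\,\vert\velocityfield(\tau,x)\vert^2\dint x\dint\tau$ there are two contributions: differentiating the transported density $\diffeo_{0,\tau}^{\velocityfield}.\template$ through \cref{lem:DiffEvolutionOperator_tt}, and differentiating the integrand $\vert\velocityfield\vert^2$ directly, the latter yielding the elementary $2\langle\velocityfield,\velocityfieldother\rangle$ term.

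Second, I would assemble the three families of terms using the notation of \cref{eq:h_t_ti_general}, \cref{eq:eta_t_ti_general} and \cref{eq:Middle_func_deriv_2_general}. The data-matching contributions collapse into $\grad h_{t,t_i}^{\template,\velocityfield}$; the weight-density contributions accumulate, after an integration by parts moving the divergence in \cref{eq:DiffEvolutionOperator_tt} onto the weight and a pullback along the flow, into $\grad\eta_{t,t_i}^{\velocityfield}$, where $\eta_{t,t_i}^{\velocityfield}$ is precisely the pulled-back integral of $\vert\velocityfield\vert^2$ along the characteristic from $t$ to $t_i$; and the direct $\vert\velocityfield\vert^2$ derivative contributes $2\mu_2\velocityfield_{t,t_i}$. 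The decisive manipulation is then exchanging the order of summation and inner integration: since the $i$-th inner integral runs over $[0,t_i]$, after the swap the coefficient of $\velocityfieldother(t,\Cdot)$ at a fixed time $t$ is a sum over exactly those gates with $t_i\ge t$, namely the index set $\{i\ge 1:t_i\ge t\}$ in \cref{eq:Energy_functional_time_discretized_gradient_l2}. Reading off the factor multiplying $\velocityfieldother(t,\Cdot)$ in the resulting $\LpSpace^2(\domain,\Real^n)$ pairing gives the $\LpSpace^2$-gradient \cref{eq:Energy_functional_time_discretized_gradient_l2}, and applying the reproducing-kernel identity \cref{eq:RKHS_L2}, equivalently composing with $\Koperator$, converts it into the $\Xspace{2}$-gradient \cref{eq:Energy_functional_time_discretized_gradient}.

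Third, the template gradient \cref{eq:Energy_functional_time_discretized_gradient_template_2} follows by differentiating $\GoalFunctionalV_D(\Cdot,\velocityfield)$ in $\template$. Since $\template$ enters each summand only through the mass-preserving action $\diffeo_{0,t_i}^{\velocityfield}.\template$ and the linear weight $\diffeo_{0,\tau}^{\velocityfield}.\template$, the same change-of-variables and chain-rule computation used in \cref{thm:energy_functional_derivative_2} pulls both functionals back to $t=0$ and produces $\frac{1}{N}\sum_{i=1}^{N}\bigl(h_{0,t_i}^{\template,\velocityfield}+\mu_2\eta_{0,t_i}^{\velocityfield}\bigr)+\mu_1\partial\RegFunc_1(\template)$. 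Finally, the system \cref{eq:OptCond_2_time_discretized} is just the Karush--Kuhn--Tucker condition for minimizing $\GoalFunctionalV_D$ over $\velocityfield\in\Xspace{2}$ and $\template\ge 0$: stationarity in the unconstrained variable $\velocityfield$ forces the $\Xspace{2}$-gradient to vanish, while the nonnegativity constraint on $\template$ introduces the multiplier $\lambda$ together with the complementarity relations $\lambda\ge 0$, $\template\ge 0$, $\lambda\template=0$.

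I expect the main obstacle to be the bookkeeping in the two reorderings that jointly build the right-hand side: exchanging sum and integral to generate the index set $\{i\ge 1:t_i\ge t\}$, and, for each fixed $i$, reordering the nested inner integrals spawned by \cref{lem:DiffEvolutionOperator_tt} so that the weight-density derivative consolidates into the single pulled-back integral $\eta_{t,t_i}^{\velocityfield}$ from $t$ to $t_i$. The care here is precisely in avoiding the $(1-t)$-type factors that appear in the continuous gradient \cref{eq:Energy_functional_gradient_2}; those factors are the continuous manifestation of $\int_t^1\dint\tau$, and in the discrete setting their role is taken over cleanly by the summation over $\{i\ge 1:t_i\ge t\}$, so the $\eta$ bundle must be verified to emerge without them.
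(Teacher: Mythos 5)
Your proposal is correct and follows essentially the same route as the paper's proof: differentiate each of the $N$ summands via \cref{lem:DiffEvolutionOperator_tt} and \cref{lem:data_matching_derivative}, reorder the nested inner integrals so the weight contribution consolidates into $\eta_{t,t_i}^{\velocityfield}$, swap sum and integral to produce the index set $\{i\ge 1: t_i\ge t\}$, and apply \cref{eq:RKHS_L2} to pass to the $\Xspace{2}$-gradient. Your observation that the $(1-t)$ factors of the continuous gradient are absorbed by the summation over $\{i\ge 1: t_i\ge t\}$ is exactly the right bookkeeping point, and the template-gradient and KKT steps match the paper's (largely omitted) treatment.
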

\begin{proof}
By \cref{lem:DiffEvolutionOperator_tt}, we derive   
\begin{multline*}
\frac{d}{d \epsilon}  \GoalFunctionalV_D(\template, \velocityfield + \epsilon\velocityfieldother)\Bigl\vert_{\epsilon=0} 
=  \frac{1}{N}\sum_{i=1}^{N} \int_{0}^{t_i}\Bigl\langle \gelement{0, \tau}{\velocityfield} . \template  \grad\Bigl(\partial\DataDisc_{\ForwardOp_{t_i}, g_{t_i}} \bigl(\diffeo_{0,t_i}^{\velocityfield} . \template\bigr)\bigl( \gelement{\tau,t_i}{\velocityfield}\bigr)\Bigr),  \velocityfieldother(\tau, \Cdot) \Bigr\rangle_{\LpSpace^2(\domain,\Real^n)} \dint \tau \\
    + \frac{\mu_2}{N}\sum_{i=1}^{N} \int_{0}^{t_i}\int_{0}^{\tau}\Bigl\langle \diffeo_{0,\iota}^{\velocityfield} . \template \grad\Bigl(\vert\diffoperator\velocityfield(\tau, \cdot)\vert^2\bigl( \gelement{\iota,\tau}{\velocityfield}\bigr)\Bigr),  \velocityfieldother(\iota, \Cdot) \Bigr\rangle_{\LpSpace^2(\domain,\Real^n)} \dint \iota \dint \tau\\
    + \frac{2\mu_2}{N} \sum_{i=1}^{N}\int_{0}^{t_i}\bigl\langle \diffeo_{0,\tau}^{\velocityfield} . \template\, \velocityfield(\tau,\Cdot),  \velocityfieldother(\tau, \Cdot)\bigr\rangle_{\LpSpace^2(\domain,\Real^n)}\dint \tau \\
  = \int_{0}^{1}\Bigl\langle \frac{1}{N}\sum_{\{i\geq 1 : t_i \geq t\}} \gelement{0, t}{\velocityfield} . \template \grad h_{t, t_i}^{\template,\velocityfield},  \velocityfieldother(t, \Cdot) \Bigr\rangle_{\LpSpace^2(\domain,\Real^n)} \dint t \\
    + \int_{0}^{1}\Bigl\langle \frac{\mu_2}{N}\sum_{\{i\geq 1 : t_i \geq t\}} \diffeo_{0,t}^{\velocityfield} . \template \grad \eta_{t, t_i}^{\velocityfield},  \velocityfieldother(t, \Cdot) \Bigr\rangle_{\LpSpace^2(\domain,\Real^n)} \dint t \\
    + \int_{0}^{1}\Bigl\langle \frac{2\mu_2}{N} \sum_{\{i\geq 1 : t_i \geq t\}} \velocityfield_{t, t_i} \diffeo_{0,t}^{\velocityfield} . \template,  \velocityfieldother(t, \Cdot)\Bigr\rangle_{\LpSpace^2(\domain,\Real^n)}\dint t.  
  \end{multline*}
Using \cref{eq:h_t_ti_general}--\cref{eq:Middle_func_deriv_2_general}, the last equation is achieved. 
Using \cref{eq:RKHS_L2} and the obtained result above, we prove \cref{eq:Energy_functional_time_discretized_gradient}. 
Moreover, the result of \cref{eq:Energy_functional_time_discretized_gradient_template_2} is straightforward. 
\end{proof} 
 

\bibliographystyle{plain}
\bibliography{shapereferences}

\end{document}